\theoremstyle{definition}
\newtheorem{thm}{Theorem}[section]
\newtheorem{defn}[thm]{Definition}
\newtheorem{lemma}[thm]{Lemma}
\newtheorem{cor}[thm]{Corollary}
\newtheorem{remark}[thm]{Remark}
\newtheorem{example}[thm]{Example}
\newcommand{\g}{{\mathfrak{g}}}
\newcommand{\n}{{\mathfrak{n}}}
\newcommand{\supp}{{\rm{Supp}}}
\newcommand{\length}{{\rm{length}}}
\newcommand{\head}{{\rm{head}}}
\newcommand{\tail}{{\rm{tail}}}
\newcommand{\Bd}{{\rm{Bd}}}
\tikzset{->-/.style={decoration={
  markings,
  mark=at position #1 with {\arrow{>}}},postaction={decorate}}}
\tikzset{
  fitting node/.style={
    inner sep=0pt,
    fill=none,
    draw=none,
    reset transform,
    fit={(\pgf@pathminx,\pgf@pathminy) (\pgf@pathmaxx,\pgf@pathmaxy)}
  },
  reset transform/.code={\pgftransformreset}
}
\numberwithin{equation}{section}
\def\set@mathmode@YT{%
\gdef\skipin@YT{$}%
\gdef\skipout@YT{$}%t
\def\smallfont@YT{\scriptstyle}%
}
\newcommand{\xdasharrow}[2][->]{
% correct vertical setting by egreg:
% http://tex.stackexchange.com/a/59660/13304
\tikz[baseline=-\the\dimexpr\fontdimen22\textfont2\relax]{
\node[anchor=south,font=\scriptsize, inner ysep=1.5pt,outer xsep=2.2pt](x){#2};
\draw[shorten <=3.4pt,shorten >=3.4pt,dashed,#1](x.south west)--(x.south east);
}
}
\begin{document}

\title[$(q,t)$-characters of KR-modules of type $A_r$ as quantum cluster variables]{$(q,t)$-characters of Kirillov-Reshetikhin modules of type $A_r$ as quantum cluster variables}
\author{Bolor Turmunkh}
\address{Department of Mathematics, University of Illinois MC-382, Urbana, IL 61821, U.S.A. e-mail: turmunk2@illinois.edu}
\date{\today}
\maketitle

\begin{abstract}
Nakajima introduced \cite{nakajima2004quiver,nakajima2003t,nakajima2002t,nakajima2001quiver,nakajima2000t} a $t$-deformation of $q$-characters, $(q,t)$-characters for short, and their twisted multiplication through the geometry of quiver varieties. The Nakajima $(q,t)$-characters of Kirillov-Reshetikhin modules satisfy a $t$-deformed $T$-system \cite{nakajima2003t}. The $T$-system is a discrete dynamical system that can be interpreted as a mutation relation in a cluster algebra in two different ways, depending on the choice of direction of evolution. In this paper, we show that the Nakajima $t$-deformed $T$-system of type $A_r$ forms a quantum mutation relation in a quantization of exactly one of the cluster algebra structures attached to the $T$-system. 

\end{abstract}

%\tableofcontents

%%%
\section{Introduction}
%% intro

Let $\mathfrak{g}$ be a simple Lie algebra over $\mathbb{C}$ of rank $r$, and let $U_q(\widehat{\mathfrak{g}})$ be the corresponding untwisted quantum affine algebra. Let $I$ be the set $\left\{1,2,\ldots, r\right\}$, and let $q\in \mathbb{C}^*$ be not a root of unity. The category of finite-dimensional complex $U_q(\widehat{\mathfrak{g}})$-modules has been classified by Chari and Pressley \cite{chari1995quantum}. Simple objects in this category are parametrized by an $r$-tuple of polynomials of one variable with constant term $1$, called the Drinfeld polynomials \cite{chari1995quantum}. 

Given $i\in I$, $k\in \mathbb{Z}_{\geq 0}$, $j\in \mathbb{Z}$, let 
\begin{equation} \label{KRDrinfeld}
\mathbf{P}_{k,j}^{(i)} = \left(  (P_{k,j}^{(i)})_\alpha (u) \right)_{\alpha\in I} \,\, ,\text{ where }
 (P_{k,j}^{(i)})_\alpha(u) =\left\{ \begin{array}{cc} \prod_{s=1}^k (1-q^j q^{2s-2}u) &\text{if } \alpha=i\\
 1&\text{otherwise}\end{array}\right.  
\end{equation}

A finite-dimensional irreducible module with an $r$-tuple of Drinfeld polynomials given by $\mathbf{P}_{k,j}^{(i)}$ is called a Kirillov-Reshetikhin module (KR-module) and denoted $W_{k,j}^{(i)}$. KR-modules were introduced in \cite{kirillov1990representations}, and then further studied by Kuniba, Nakanishi, and Suzuki in \cite{kuniba1994functional}, Hatayama, Kuniba, Okado, Takagi, and Yamada in \cite{hatayama1998remarks} and Chari in \cite{chari2001fermionic}. One of the main tools used to study finite-dimensional $U_q(\widehat{\mathfrak{g}})$-modules is their \emph{$q$-characters}, which are the affine analogs of $U_q(\g)$-characters. The theory of $q$-characters was introduced by Knight \cite{knight1995spectra} and Frenkel-Reshetikhin \cite{frenkel1999q} for the Yangians and the quantum affine algebras respectively. One of the key properties concerning the KR-modules is that their characters and $q$-characters satisfy the functional relations called the $Q$-system and the $T$-system respectively. Nakajima proved the latter result in \cite{nakajima2003t} for types ADE using the $t$-analog of $q$-characters, $(q,t)$-characters for short, defined geometrically through quiver varieties.

The $T$-system of type $A_r$ is a recursion relation on commuting variables $\left\{T_{k}^{(i)}(a)\right\}$, for $a\in \mathbb{C}^*, k\in \mathbb{Z}_+$, and $i\in I$, defined as follows:
\begin{eqnarray}\label{eq:classicTsystemSpec}
T_{k}^{(i)}\left( a\right) T_{k}^{(i)}\left(aq^2 \right) = T_{k+1}^{(i)}(a)T_{k-1}^{(i)}(aq^2) + T_{k}^{(i+1)}(a)T_{k}^{(i-1)}(a)\,,
\end{eqnarray}
with the convention $T_{k}^{(0)}(a) = T_{k}^{(r+1)}(a)=1$. Without loss of generality, we can always assume $a\in \mathbb{C}^*$ is fixed. Then we need only keep track of the powers of $q$. Using a change of variables $T_k^{(i)}(aq^j) \rightarrow T_{k, k+j}^{(i)}$ and relabeling $l = k+j+1$, we arrive at another form of the $T$-system, also known as the \emph{octahedron recurrence}:
\begin{eqnarray} \label{eq:classicTsystem}
T_{k,l-1}^{(i)} T_{k,l+1}^{(i)} = T_{k+1,l}^{(i)} T_{k-1,l}^{(i)} + T_{k,l}^{(i-1)} T_{k,l}^{(i+1)} ,
\end{eqnarray}
for  $k\in \mathbb{Z}_+$, $i=1,2,\ldots, r$, and with the convention $T_{k,l}^{(0)} = T_{k,l}^{(r+1)} = 1$. The $T$-system of type $A_r$ in \eqref{eq:classicTsystemSpec} was originally discovered by Bazhanov-Reshetikhin in \cite{bazhanov1990restricted} as a functional relations among the transfer matrices of the generalized RSOS models, and later, generalized by Kuniba, Nakanishi, and Suzuki in \cite{kuniba1994functional} to all Dynkin types. The Nakajima $(q,t)$-characters of KR-modules satisfy a deformed $T$-system with a twisted multiplication on the variables $T_{k,l}^{(i)}$. The $t$-deformed $T$-system is a quadratic recursion relation on non-commutative variables that reduce to the classical $T$-system when $t=1$.

\emph{Cluster algebras} are commutative algebras generated by the union of commutative variables, called \emph{cluster variables}. The generators are related by rational transformations called \emph{mutations}, which are determined by an \emph{exchange matrix} \cite{fomin2002cluster}. Since all cluster variables are related to one another via mutations, it suffices to state a single cluster, called the \emph{fundamental cluster}, along with the exchange matrix, in order to define a cluster algebra. When the exchange matrix is invertible, there exists a canonical Poisson structure on the cluster variables \cite{gekhtman2003cluster}. A quantization of this canonical Poisson structure was introduced by Berenstein-Zelevinsky in \cite{berenstein2005quantum}, and is called a \emph{quantum cluster algebra}. Quantum cluster algebras are non-commutative algebras, and as such, their generators are not required to have any commutation relation at all. However, the variables within the same cluster satisfy a $t$-commutation relation.

Kedem \cite{kedem2008q} and Di Francesco-Kedem \cite{di2009positivity} realized $Q$ and $T$ systems as mutation relations in certain cluster algebras. The $Q$-system is a recursion relation on commuting variables $\left\{Q_{k}^{(i)}\right\}$ obtained from the $T$-system in the $l\rightarrow \infty$ limit. The cluster algebra formulation of the $Q$-system in \cite{kedem2008q} was used to obtain a unique quantization of the $Q$-system in \cite{di2012solution,di2011non}. The resulting quantum $Q$-system was shown to have deep connections with many areas such as the fusion product, defined in \cite{feigin1999generalized}, of KR-modules \cite{di2013quantum}, the quantum current subalgebra $U_q(\n_+[u,u^{-1}])$ in $U_q(\widehat{\mathfrak{sl}}_2)$ \cite{di2016quantum} , where $\n_+$ is the positive nilpotent subalgebra in $\mathfrak{sl}_2$, and a new set of $q$-difference operators \cite{di2015difference}, which are generalizations of the Macdonald raising operators in the limit $t\rightarrow \infty$, acting on the characters of KR-modules of type $A_r$.

The $T$-system equation can be interpreted as mutation relations in an infinite rank cluster algebra if we declare the $T_{k,l}^{(i)}$ variables to be invertible  \cite{hernandez2013cluster,di2009positivity}. The $T$-system in \eqref{eq:classicTsystem} is written such that the direction of mutation is in the direction of the $l$-parameter. That is, one can obtain $T_{k,l+1}^{(i)}$ using variables with lower value of $l$ only: 
\begin{eqnarray*}
T_{k,l+1}^{(i)} =  (T_{k,l-1}^{(i)})^{-1} \left(T_{k+1,l}^{(i)}T_{k-1,l}^{(i)} + T_{k,l}^{(i-1)}T_{k,l}^{(i+1)} \right).
\end{eqnarray*}
It is possible to rewrite the $T$-system in \eqref{eq:classicTsystem} such that the direction of mutation is in the $k$-parameter (see \cite{di2012solution}). Because the $T$-system cluster algebra is an infinite rank cluster algebra, these two choices of directions of mutations define $2$ distinct cluster algebra structures. Moreover, since the exchange matrices in both cases are of infinite rank, there are no canonical Poisson structures to these cluster algebras associated with the $T$-system.  Di Francesco and Kedem considered a quantization of the $A_1$ $T$-system in \cite{di2012solution}. Their choice corresponds to the quantization of the $T$-system cluster algebra with direction of mutation in the $k$-parameter. 

In light of the above results, it is natural to ask if the Nakajima deformed $T$-system forms a quantum mutation in a quantum cluster algebra, and if yes, then which one. In this paper, we provide answers to both questions. In particular, we show that the Nakajima deformed $T$-system \emph{does} form a quantum mutation, but \emph{only} when the direction of mutation is in the $l$-parameter. 

%{\color{red}
%The quantum $Q$-system in \cite{di2011non} forms a quantum cluster algebra when mutated in the $k$-direction. There are $2$ possible ways to obtain a quantum $Q$-system from a quantum $T$-system:
%\begin{enumerate}
%\item We may rewrite the deformed $T$-system such that the evolution is in the $k$-direction. However, Nakajima's deformed $T$-system does not form a quantum cluster algebra when mutated in the $k$-direction. 
%\item Since the $T$-system is symmetric in $k$ and $l$ parameters, we may drop the dependence on the $k$ parameter and obtain a $Q$-system in terms of the $l$ parameter. However, the $(q,t)$-characters of KR-modules satisfy a specialization of the $T$-system for which the $l$ parameter is not independent of the $k$ parameter. Therefore, it is not possible to drop the $k$ parameter from Nakajima's deformed $T$-system. 
%\end{enumerate}
%That is, Nakajima's deformed $T$-system does not provide a valid quantum $Q$-system in both cases. In particular, the quantum $T$-system is not compatible with the quantum $Q$-system in \cite{di2011non}.
%}
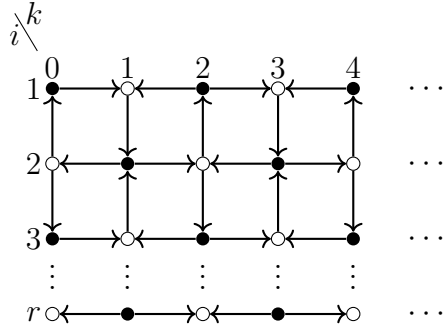
\begin{figure}[h]
\begin{tikzpicture}

	\tikzstyle{black} = [circle, minimum width=5pt, fill, inner sep=0pt]
	\tikzstyle{white} = [circle, minimum width=5pt,draw,inner sep=0pt]

\node at (-0.5,0)[right] {$k$};
\node at (0,-1) [above] {$0$};
\node at (1,-1) [above] {$1$};
\node at (2,-1) [above] {$2$};
\node at (3,-1) [above] {$3$};
\node at (4,-1) [above] {$4$};

\draw (-.5,0) -- (-.2,-.5);

\node at (-0.5,0) [left,below] {$i$};
\node at (0,-1) [left] {$1$};
\node at (0,-2) [left] {$2$};
\node at (0,-3) [left] {$3$};
\node at (0,-4) [left] {$r$};

\node[black] (10) at (0,-1) {};
\node[white] (11) at (1,-1) {};
\node[black] (12) at (2,-1) {};
\node[white] (13) at (3,-1) {};
\node[black] (14) at (4,-1) {};
\node at (5,-1) {$\cdots$};
\node at (5,-2) {$\cdots$};
\node at (5,-3) {$\cdots$};
\node at (5,-4) {$\cdots$};
\node[white] (20) at (0,-2) {};
\node[black] (21) at (1,-2) {};
\node[white] (22) at (2,-2) {};
\node[black] (23) at (3,-2) {};
\node[white] (24) at (4,-2) {};

\node[black] (30) at (0,-3) {};
\node[white] (31) at (1,-3) {};
\node[black] (32) at (2,-3) {};
\node[white] (33) at (3,-3) {};
\node[black] (34) at (4,-3) {};
\node at (0,-3.4) {$\vdots$};
\node at (1,-3.4) {$\vdots$};
\node at (2,-3.4) {$\vdots$};
\node at (3,-3.4) {$\vdots$};
\node at (4,-3.4) {$\vdots$};

\node[white] (r0) at (0,-4) {};
\node[black] (r1) at (1,-4) {};
\node[white] (r2) at (2,-4) {};
\node[black] (r3) at (3,-4) {};
\node[white] (r4) at (4,-4) {};

\path[->,thick]

(10) edge (11)
(11) edge (21)
(21) edge (20)
(20) edge (10)
(20) edge (30)
(30) edge (31)
(31) edge (21)

(12) edge (11)
(21) edge (22)
(22) edge (12)
(22) edge (32)
(32) edge (31)

(12) edge (13)
(13) edge (23)
(23) edge (22)
(33) edge (23)
(32) edge (33)

(14) edge (13)
(23) edge (24)
(24) edge (14)
(24) edge (34)
(34) edge (33)

(r1) edge (r0)
(r1) edge (r2)
(r3) edge (r2)
(r3) edge (r4)

;

\end{tikzpicture}
\caption{The quiver $\Gamma_B$}
\label{fig:Tsystem}
\end{figure}

We now summarize the main findings of this paper. We first define the cluster algebra formulation of the $T$-system in a similar way to that in \cite{di2009positivity}, with the difference being that the direction of evolution is in the $l$-direction. The parameters $i,k,l$ in \eqref{eq:classicTsystem} correspond to $\alpha, k, j$ parameters in \cite{di2009positivity} respectively. Let $B$ be the signed adjacency matrix of the quiver $\Gamma_B$ in Figure~\ref{fig:Tsystem}, which is the exchange matrix of the cluster algebra we are constructing. 

\begin{defn} \label{notation:mod}
For any $n\in \mathbb{Z}_+$, we define $(n)_2 := n\,{\rm{mod}}2$. 
\end{defn}
To each vertex $(k,i)$ in $\Gamma_B$, we associate the fundamental cluster variable $T_{k,(i+k+1)_2}^{(i)}$, which is given by the $q$-character of the corresponding KR-module. The fundamental cluster is given by the set:
\begin{equation} \label{eq:FundClustT}
\mathcal{C} := \left\{T_{k,(i+k+1)_2}^{(i)} \,\, \big| \,\, i\in I, \,\, k\in \mathbb{Z}_+\right\}\,\, ,
\end{equation}
with the boundary conditions $T_{k,l}^{(i)} = 1$ if either $k=0$, $i=0$, or $i=r+1$. With the exchange matrix $B$ and the fundamental cluster $\mathcal{C}$, mutation relations are given precisely by the $T$-system equations. 

We now state the main result of this paper, which answers the question stated above:
\begin{thm}\label{thm:main}
Let $(B, \mathcal{C})$ be as before, only now we substitute $T_{k,l}^{(i)}$ with their non-commutative versions given by $(q,t)$-characters of KR-modules. 

(I). The fundamental cluster variables $t$-commute with respect to the Nakajima's twisted multiplication (this condition ensures that it is possible to carry out the quantum mutation). Moreover, the commutation matrix $\Lambda = (\Lambda_{i,k,l}^{i',k',l'}),$ defined as:
\begin{equation} \label{LambdaMatrix}
T_{k,l}^{(i)} * T_{k',l'}^{(i')} = t^{\Lambda_{i,k,l}^{i',k',l'} } T_{k',l'}^{(i')} * T_{k,l}^{(i)} \text{ for all } T_{k',l'}^{(i')} , T_{k,l}^{(i)} \in \mathcal{C},
\end{equation}
is an integer-valued matrix.

(II). $(B,\Lambda)$ forms a compatible pair, i.e. 
\begin{equation} \label{Cond2}
\Lambda B = D,
\end{equation} 
where $D$ is a diagonal matrix with positive entries. 

(III). The quantum mutation is given by %Proposition 4.9 in BZ. 
\begin{equation*} \label{Cond3}
T_{k,l-1}^{(i)}*T_{k,l+1}^{(i)} = t^{\frac{1}{2} \Lambda_{i,k,l-1}^{i,k-1,l} + \frac{1}{2}\Lambda_{i,k,l-1}^{i,k+1,l} - \frac{1}{2} \Lambda_{i,k-1,l}^{i,k+1,l} } T_{k-1,l}^{(i)}*T_{k+1,l}^{(i)}+t^{\frac{1}{2} \Lambda_{i,k,l-1}^{i-1,k,l} + \frac{1}{2}\Lambda_{i,k,l-1}^{i+1,k,l} - \frac{1}{2} \Lambda_{i-1,k,l}^{i+1,k,l} } T_{k,l}^{(i-1)}*T_{k,l}^{(i+1)}.
\end{equation*}
In other words, $(B,\mathcal{C}, \Lambda)$ defines a quantum cluster algebra. 
\end{thm}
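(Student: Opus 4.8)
The plan is to treat the three parts in order, extracting an explicit closed form for $\Lambda$ first and then feeding it into the compatibility and mutation checks. For Part (I), I would begin from Nakajima's construction of the twisted product on the ring of Laurent polynomials generated by the variables $Y_{i,a}^{\pm1}$, in which monomials $t$-commute through an antisymmetric integer bilinear form $\mathcal{N}$; the product $T_{k,l}^{(i)}*T_{k',l'}^{(i')}$ is then computed monomial-by-monomial. Using the explicit description of the $(q,t)$-character of a type $A_r$ KR-module as a bar-invariant sum supported on a single coset of the root-lattice part of the monomial lattice, with distinguished highest $\ell$-weight monomial $M_{k,l}^{(i)}$, I would reduce the $t$-commutation \eqref{LambdaMatrix} to the statement that $\mathcal{N}(m,m')$ is independent of the monomials $m$, $m'$ chosen from the two characters. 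Granting this, the twisted product collapses to $t^{\Lambda}$ times the reversed product with $\Lambda = \mathcal{N}(M_{k,l}^{(i)}, M_{k',l'}^{(i')})$, and since the $M_{k,l}^{(i)}$ are explicit products of $Y$-variables whose spectral parameters lie in an arithmetic progression fixed by the fundamental-cluster normalization $T^{(i)}_{k,(i+k+1)_2}$, evaluating $\mathcal{N}$ on them yields a closed, manifestly integer-valued formula for $\Lambda$.

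The heart of Part (I), and the step I expect to be the main obstacle, is establishing the constancy of $\mathcal{N}(m,m')$ across the two characters. Since any two monomials of a fixed $(q,t)$-character differ by a product of the ``simple-root'' monomials $A_{i,a}^{\pm1}$, constancy is equivalent to showing that $\mathcal{N}(A_{i,a}, m') = 0$ for every $A_{i,a}$ moving inside $T_{k,l}^{(i)}$ and every monomial $m'$ of $T_{k',l'}^{(i')}$, and symmetrically. I would prove this by substituting the explicit expression $A_{i,a} = Y_{i,aq}Y_{i,aq^{-1}}\prod_{j\sim i} Y_{j,a}^{-1}$ together with the known value of $\mathcal{N}$ on pairs of $Y$-variables, and then using the bipartite colouring of $\Gamma_B$ in Figure~\ref{fig:Tsystem} (equivalently the parity constraint $(i+k+1)_2$) to force the spectral parameters of the two KR-modules into compatible diagonal strips, so that the several contributions to $\mathcal{N}(A_{i,a}, m')$ cancel in pairs. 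The controlled (thin) support of type $A_r$ KR $q$-characters keeps this bookkeeping finite and local.

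For Part (II), with the closed formula for $\Lambda$ available, I would verify \eqref{Cond2} directly. Because $B$ is the signed adjacency matrix of $\Gamma_B$, each of its columns is supported only on the quiver-neighbours of a single vertex, so every entry of $\Lambda B$ is a short alternating sum of $\Lambda$-values between a fixed vertex and the neighbours of another; despite the infinite rank of $B$ this is a finite local computation at each entry. Substituting the arithmetic-progression form of $\Lambda$, I expect the off-diagonal sums to telescope to zero because of the octahedron structure of \eqref{eq:classicTsystem}, while the diagonal sums collect to a fixed positive integer, producing the positive diagonal $D$.

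Finally, Part (III) follows by matching Nakajima's $t$-deformed $T$-system against the Berenstein-Zelevinsky quantum exchange relation. Compatibility of $(B,\Lambda)$ from Part (II) guarantees, by \cite{berenstein2005quantum}, that the quantum mutation in direction $(k,i)$ is well-defined, and its two exchange monomials $T^{(i)}_{k-1,l}*T^{(i)}_{k+1,l}$ and $T^{(i-1)}_{k,l}*T^{(i+1)}_{k,l}$ acquire exactly the normalizing half-integer powers of $t$ displayed in the statement, read off from $\Lambda$. Independently, the $(q,t)$-characters satisfy a $t$-deformed $T$-system of the same shape by \cite{nakajima2003t}, carrying its own $t$-powers. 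The remaining work is to show the two prescriptions coincide: rewriting Nakajima's normalization of the left-hand product $T_{k,l-1}^{(i)}*T_{k,l+1}^{(i)}$ in terms of $\Lambda$ via the commutation relations from Part (I), and checking the resulting exponents agree with the cluster-theoretic ones. The most delicate routine point is tracking the half-integer $t$-exponents and confirming their integrality where required; the fact that this matching succeeds for the $l$-direction, in contrast to the $k$-direction discussed in the introduction, is the conceptual content of the theorem.
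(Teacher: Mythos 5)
The proposal fails at the central step of Part (I). You reduce $t$-commutation to the claim that the monomial-wise commutation exponent $\mathcal{N}(m,m')$ is \emph{constant} over all pairs of monomials of the two characters, to be proved via $\mathcal{N}(A_{i,a},m')=0$. This is false in Nakajima's setting, in two ways. First, the twisting exponent here is $\gamma(m^1,m_+^1;m^2,m_+^2)+\epsilon(m_+^1,m_+^2)$ (Definition \ref{def:Nakajima}), and $\gamma$ depends on the dominant monomials through $v_{i,j}(m,m_+)$, so it is not a bilinear form on the monomial lattice at all; one cannot expand it over factors $A_{i,a}^{-1}$ the way you propose. Second, and decisively, the exponent genuinely varies across monomial pairs even when both characters lie in the fundamental cluster: the paper's Section 5 computation of $\widehat{\chi}_{1,0}*_\gamma\widehat{\chi}_{1,2}$ exhibits exponents $(0,0,\mp 1,0)$ on the four monomial pairs, and for commuting cluster pairs the sets $\mathcal{P}_{\pm 1}$ are nonempty (any pair of tableaux containing an exchangeable $L$-strip has $\gamma=\pm 1$; see Example \ref{ex:columncompat}). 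So your cancellation $\mathcal{N}(A_{i,a},m')=0$ breaks at the first nontrivial pair. The actual mechanism in the paper is not constancy but a bijective cancellation: a box-exchange involution $\sigma$ matches pairs $(C,T)$ with $\gamma=\pm 1$ to pairs with the \emph{same} product monomial and opposite $\gamma$, so each product monomial carries the symmetric coefficient $t^{\gamma}+t^{-\gamma}$; the hard content (Theorem \ref{thm:main1}, proved through the compatibility lemmas summarized in Table \ref{tab:cancellations}) is that pairs admitting no exchangeable sequence have $\gamma=0$. Nothing in your outline produces either the pairing or this vanishing statement, and with the constancy claim removed, your closed formula $\Lambda=\mathcal{N}(M_{k,l}^{(i)},M_{k',l'}^{(i')})$ has no derivation. (The correct statement, Corollary \ref{gammaComm}, is $\Lambda=2\epsilon$ evaluated on the dominant monomials, but it is a \emph{consequence} of commutativity with respect to $*_\gamma$, not an input.)

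Your outlines for Parts (II) and (III) are closer in spirit to the paper, which computes $u(\mathbf{Y}_{k,-k+(i+k+1)_2}^{(i)})(s)=e_i\otimes s^{-1+(i+k+1)_2}[k]_s$, expresses $\epsilon$ through the symmetric operator $D=K^{-1}$ (Lemma \ref{EpsilonForm}), and verifies $\tfrac12\Lambda B=I$ entrywise by constant-term extraction — a finite local check per entry, much as you anticipate. But both parts presuppose the integer commutation matrix from Part (I), which your argument does not validly establish, and in Part (III) the exponent matching is not mere bookkeeping: it requires the two identities \eqref{cond3eq1} and \eqref{cond3eq2}, the second of which carries the extra $-1$ that absorbs the $t^{-1}$ in Nakajima's deformed $T$-system \eqref{eq:tTsystem}; this is exactly the computation where the $l$-direction succeeds and the $k$-direction (Section 5) fails, so it must be exhibited, not assumed to work out.
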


There are $2$ main parts to this work. The bulk of the work is a combinatorial construction that proves $(q,t)$-characters in the fundamental cluster $t$-commute with one another under Nakajima's twisted multiplication. We use a slightly modified version of the tableaux-sum notation for $q$-characters introduced in \cite{nakajima2002t} and define the notion of a block-tableau, which plays an integral role in the proof. Once $t$-commutativity is established, the second half of this paper is concerned with the commutation coefficients of the fundamental cluster variables. In particular, we show that the commutation coefficients are compatible with the cluster algebra exchange matrix and the mutation relations in the language of Berenstein-Zelevinsky \cite{berenstein2005quantum}.  

The structure of the paper is as follows. In Chapter 2, we give the necessary background material. In Chapter 3, we introduce a modification of the tableaux-sum notation for the $q$-characters of Kirillov-Reshetikhin modules that appeared in \cite{nakajima2002t} and prove Theorem \ref{thm:main}, condition I. In Chapter 4, we show  conditions II and III in Theorem \ref{thm:main} are satisfied. In Chapter 5, we show that Nakajima's $t$-deformed $T$-system is not a quantum cluster algebra when the direction of mutation is in $k$ parameter using a simple counter example. And we conclude with Chapter 6.

\textbf{Acknowlodgements:} I would like to thank Rinat Kedem, Philippe Di Francesco, and Maarten Bergvelt for their valuable discussion. This work was partially funded by the NSF grant DMS-1404988.  

%%%
\section{Definitions} \label{defs}
%defn_short
\subsection{$(q,t)$-characters and the deformed $T$-system}
For the rest of this paper, $\g$ will always refer to the simple Lie algebra of type A and rank $r$, i.e. $\mathfrak{sl}_{r+1}$. Let $\left\{\alpha_i\right\}_{i \in I}$ and $\left\{ \omega_i\right\}_{i\in I}$ be the simple roots and the fundamental weights of $\g$. We denote $Q_+=\bigoplus_{i=1}^r \mathbb{Z}_{+} \alpha_i$ to be the positive root lattice and $P=\bigoplus_{i=1}^r \mathbb{Z} \omega_i$ to be the weight lattice. Let $\mathbf{R}$ and $\widehat{\mathbf{R}}$ be the Grothendieck rings of the categories of finite dimensional representations of $U_q(\g)$ and $U_q(\widehat{\g})$ respectively. %We will not distinguish between a module $V$ and its class in $\mathbf{R}$ or $\widehat{\mathbf{R}}$.

Given $\lambda\in P$ and the irreducible highest weight module $V_\lambda = \oplus_{\alpha \in Q_+} V_{\lambda - \alpha}$ in $\mathbf{R}$, the character of $V_\lambda$ is a formal sum $\chi(V_\lambda) = \sum_{\alpha \in Q_+} \dim(V_{\lambda - \alpha}) e^{\lambda - \alpha}$. If we write $y_i = e^{\omega_i}$, the character is an injective homomorphism of commutative rings:
\begin{eqnarray*}
\chi :\mathbf{R} \rightarrow \mathbb{Z}[y_i, y_i^{-1}]_{i\in I}.
\end{eqnarray*}
Let us also define monomials:
\begin{equation} \label{eq:finiteA}
a_i = y_i^2 y_{i-1}^{-1} y_{i+1}^{-1},
\end{equation}
with the convention $y_i = 1$ if $i\notin I$. Notice that $a_i$ is identified with $e^{\alpha_i}$. %Therefore, multiplying by $a_i^{-1}$ is equivalent to applying a lowering operator with weight $-\alpha_i$. 

The affine analog of the character map $\chi$ is the $q$-character map $\chi_q$, first introduced in \cite{knight1995spectra, frenkel1999q}, and it uniquely characterizes the isomorphism classes of irreducible modules in $\widehat{\mathbf{R}}$. The character of a finite dimensional $U_q(\g)$-module is the generating series of the weight space multiplicities. Similarly, the $q$-character of a finite dimensional $U_q(\widehat{\g})$-module is the generating series of the affine analog of the weight space multiplicities, called the $l$-weight spaces. The $q$-character map \cite{frenkel1999q} is an injective ring homomorphism $\chi_q$ that makes the following diagram commute:
\[
\begin{tikzcd}
\widehat{\mathbf{R}} \arrow{r}{\chi_q} \arrow[swap]{d}{\rm{res}} & \mathbb{Z}[Y_{i,j}^{\pm 1}]_{i\in I, j\in \mathbb{Z}} \arrow{d}{p} \\
\mathbf{R}  \arrow{r}{\chi} & \mathbb{Z}[y_i^{\pm 1}]_{i\in I}
\end{tikzcd}
\]
where $p(Y_{i,j})=y_i$ and $\rm{res}$ is the restriction map.  

Recall that the Drinfeld polynomial corresponding to $V\in \widehat{\mathbf{R}}$ is an $r$-tuple of polynomials of one variable. We identify every term of the form $(1-q^ju)$ appearing in the $i$th polynomial with $Y_{i,j}$. With this identification, the Drinfeld polynomial is mapped to a monomial of positive powers of $Y_{i,j}$'s only, i.e. a \emph{dominant monomial}, which corresponds to the $U_q(\g)$-highest weight vector in $V$. The Drinfeld polynomial corresponding to the KR-module $W_{k,j}^{(i)}$ defined in \eqref{KRDrinfeld} is identified with the dominant monomial $\mathbf{Y}_{k,j}^{(i)}$ as follows:
\begin{equation} \label{KRdom}
\mathbf{P}_{k,j}^{(i)} \mapsto \mathbf{Y}_{k,j}^{(i)} := Y_{i,j}Y_{i,j+2} \cdots Y_{i,j+2(k-2)}.
\end{equation}

Nakajima \cite{nakajima2000t,nakajima2002t,nakajima2004quiver} introduced a $t$-analog of the the Grothendieck ring  $\widehat{\mathbf{R}}_t = \mathbb{Z}[t,t^{-1}]\otimes_{\mathbb{Z}} \widehat{\mathbf{R}}$ and the $t$-analog of $q$-characters through the geometry of Quiver varieties. The $(q,t)$-map is a $\mathbb{Z}[t,t^{-1}]$-linear injective map:
\begin{eqnarray*}
\chi_{q,t}: \widehat{\mathbf{R}}_t \rightarrow \mathbb{Z}[t,t^{-1}]\otimes \mathbb{Z}[Y_{i,j},Y_{i,j}^{-1}]_{i\in I, j\in \mathbb{Z}},
\end{eqnarray*}
with the property that $\chi_{q,t=1}=\chi_q$. Frenkel and Mukhin \cite{frenkel2001combinatorics} gave a combinatorial algorithm that computes the $q$-characters of a class of modules that includes the fundamental modules \cite{frenkel2001combinatorics} and the KR-modules \cite{nakajima2002t}. Nakajima gave a similar algorithm that computes the $(q,t)$-characters of the fundamental modules. Although $\chi_{q,t}$ is not a ring homomorphism, Nakajima introduced a twisted multiplication on both the source and the target of $\chi_{q,t}$ that makes it into a homomorphism of non-commutative rings. Thus, starting with the $(q,t)$-characters of the fundamental modules, one can obtain the $(q,t)$-character of any twisted product of fundamental modules. %. standard module, which is a tensor product of fundamental modules \cite{varagnolo2002standard}. 

The $(q,t)$-characters of KR-modules of type $A$ are identical to their $q$-characters \cite{nakajima2000t}. Therefore, we will omit the description of the algorithm for obtaining the $(q,t)$-characters of the fundamental modules and move straight to the description of the twisted multiplication. 

Let
\begin{eqnarray}\label{def:A}
A_{i,j} = Y_{i,j-1}Y_{i,j+1} Y_{i-1,j}^{-1}Y_{i+1,j}^{-1} \,\, ,
\end{eqnarray} 
with the convention that if $i \notin I$, then $Y_{i,j}=1$. This is the affine analog of the monomials in \eqref{eq:finiteA}. Given $V\in \widehat{\mathbf{R}}$, the $q$-character of $V$ \cite{frenkel2001combinatorics}, and in our case, the $(q,t)$-character of $V$ if $V$ is a KR-module, is of the form:
\begin{eqnarray*}
\chi_q(V) = m_+ + \sum_{M\in \mathcal{A}} m_+ M \,\, ,
\end{eqnarray*}
where $m_+$ is a dominant monomial and $\mathcal{A}$ is some subset of monomials in $\mathbb{Z}_+[A_{i,j}^{-1}]_{i\in I j\in \mathbb{Z}}$. 

\begin{defn} 
Let $\mathcal{M}$ be the set of monomials in $\mathbb{Z}[Y_{i,j}, Y_{i,j}^{-1}]_{i\in I, j\in \mathbb{Z}}$. Given $m,m'\in \mathcal{M}$, we say $m'$ is a descendant of $m$ if $m'$ is obtained by applying a set of $A_{i,j}^{-1}$ to $m$, i.e. $m'm^{-1}\in \mathbb{Z}[A_{i,j}^{-1}]_{i\in I, j\in \mathbb{Z}}$. 
\end{defn}
With this definition, given a $q$-character of an irreducible module, all its monomials are descendants of the dominant monomial. 
\begin{defn} \label{uv}
For $m_+,m\in \mathcal{M}$ such that $m_+$ dominant and $m$ descendant of $m_+$, define $u_{i,j}(m) \in \mathbb{Z}$ and $v_{i,j}(m,m_+)\in \mathbb{N}$ as follows: 
\begin{equation*}
m = \prod_{i,j} Y_{i,j}^{u_{i,j}(m)} =m_+ \prod_{i,j} A_{i,j}^{-v_{i,j}(m,m_+)}\,\, .
\end{equation*}
\end{defn}

The following definitions are due to Nakajima \cite{nakajima2003t}.

\begin{defn} \label{uinverse}
For $m\in \mathcal{M}$, let $\tilde{u}_{i,j}(m) \in \mathbb{R}$ ($i\in I, j\in \mathbb{Z}$) be a solution of the system
\begin{equation*} 
u_{i,j}(m) = \tilde{u}_{i,j-1}(m) + \tilde{u}_{i,j+1}(m) - \tilde{u}_{i-1,j}(m) - \tilde{u}_{i+1,j}(m) \,\, ,
\end{equation*}
such that $\tilde{u}_{i,j}(m)=0$ for $j$ sufficiently small. As usual, $\tilde{u}_{i,j}(m) = 0$ if $i\notin I$. 
\end{defn}
\begin{remark}
For any monomial $m\in \mathcal{M}$, it can be seen from Definition \ref{uv}, there are only finitely many non-zero $u_{i,j}(m)$. The condition $\tilde{u}_{i,j}(m)=0$ for $j$ sufficiently small ensures there is in fact a unique integral solution to the system, which can be verified through direct computation. However, there can be infinitely many non-zero $\tilde{u}_{i,j}(m)$. The system in Definition \ref{uinverse} looks cryptic. As we will never have to solve this system explicitly, the reader need only realize that there is a unique solution to the system at this point.  
\end{remark}

\begin{defn} \label{def:Nakajima}
Let $m_+^1,m_+^2 \in \mathcal{M}$ be dominant monomials and $m^1,m^2\in \mathcal{M}$ such that $m^i$ is a descendant of $m_+^i$. 
\begin{eqnarray*}
\epsilon(m_+^1, m_+^2) &:=& - \sum_{i,j} u_{i,j+1}(m_+^1) \tilde{u}_{i,j}(m_+^2) + \sum_{i,j} u_{i,j+1}(m_+^2) \tilde{u}_{i,j}(m_+^1) \,\, ,\\
d(m^1,m_+^1; m^2, m_+^2) &:=& \sum_{i,j} v_{i,j+1}(m^1,m_+^1)u_{i,j}(m^2) + u_{i,j+1}(m_+^1) v_{i,j}(m^2, m_+^2) \,\, ,\\
\gamma(m^1, m_+^1; m^2, m_+^2) &:=& d(m^1,m_+^1; m^2, m_+^2) - d(m^2,m_+^2; m^1, m_+^1)\,\, .
\end{eqnarray*}
\end{defn}
Notice that the above sums are all finite sums as there are only finitely many non-zero values of $u_{i,j}(m)$ and $v_{i,j}(m,m_+)$ for any monomials $m$ and $m_+$. Also, it follows immediately from the definition that both $\gamma$ and $\epsilon$ are anti-symmetric. 

\begin{remark} \label{rmk:simplegamma}
Given a monomial $m$ in a $(q,t)$-character, in our application of the $\gamma$ function, it will always be clear what the dominant monomial of $m$ is. Therefore, we simplify our notation and write
\begin{equation*}
\gamma(m^1,m^2) := \gamma(m^1,m_+^1; m^2, m^2_+)\,\,.
\end{equation*}
\end{remark}

\begin{defn} \label{twistedMult}
Let $V^1, V^2 \in \widehat{\mathbf{R}}$ and let $\chi_{q,t}^1$ and $\chi_{q,t}^2$ be their $(q,t)$-characters. Let $m_+^1, m_+^2$ be the dominant monomials and $m^1, m^2$ any monomials in $\chi_{q,t}^1$ and $\chi_{q,t}^2$ respectively. The twisted multiplication on monomials is defined as follows:
\begin{equation*} 
m^1 * m^2 := t^{\gamma(m^1,m^2) + \epsilon(m_+^1,m_+^2)} m^1m^2 \,\, ,
\end{equation*}
where $m^1m^2$ is the usual multiplication of monomials. Multiplication on $(q,t)$-characters is defined by linearly expanding the twisted multiplication on monomials. 
\end{defn}
We also define $*_\gamma$ multiplication as follows: 
\begin{equation} \label{gammaMult}
m^1*_\gamma m^2 := t^{-\epsilon(m_+^1, m_+^2)} m^1*m^2 = t^{\gamma(m^1,m^2)} m^1m^2 \,\, .
\end{equation}
We now state the result that this work is based on. 
\begin{thm}[Nakajima, \cite{nakajima2003t}] \label{thm:Nakajima}
Let $\chi_{k,j}^{(i)} := \chi_{q,t}(W_{k,j}^{(i)})$, where $W_{k,j}^{(i)}$ is the KR-module with dominant monomial $\mathbf{Y}_{k,j}^{(i)}$ (see \eqref{KRdom}). The following relations hold between the $(q,t)$-characters:
\begin{equation} \label{eq:tTsystem}
\chi_{k,j}^{(i)} *_\gamma \chi_{k,j+2}^{(i)} =\chi_{k+1,j}^{(i)} *_\gamma \chi_{k-1,j+2}^{(i)} + t^{-1} \chi_{k,j+1}^{(i-1)} *_\gamma \chi_{k,j+1}^{(i+1)} \,\, ,
\end{equation}
with the convention $\chi_{k,j}^{(i)} = 1$ if $k=0$ or $i\notin I$. 
%\begin{eqnarray*}
%t^{-\epsilon(P_{k,j}^{(i)}, P_{k,j+2}^{(i)})} \chi_{q,t}(W_{k,j}^{(i)}) * \chi_{q,t}(W_{k,j+2}^{(i)} ) &=& t^{-\epsilon( P_{k+1,j}^{(i)}, P_{k-1, j+2}^{(i)} )} \chi_{q,t}(W_{k+1,j}^{(i)}) * \chi_{q,t} (W_{k-1, j+2}^{(i)}  ) \\
%&&+ t^{-1 - \epsilon( P_{k, j+1}^{(i-1)}, P_{k, j+1}^{(i+1)} ) } \chi_{q,t}( W_{k, j+1}^{(i-1)} ) * \chi_{q,t} ( W_{k, j+1}^{(i+1)} )
%\end{eqnarray*}
\end{thm}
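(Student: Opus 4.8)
The plan is to prove the identity \eqref{eq:tTsystem} monomial-by-monomial after expanding each $*_\gamma$-product. By \eqref{gammaMult}, for two $q$-characters $\chi^1,\chi^2$ one has $\chi^1 *_\gamma \chi^2 = \sum_{m^1,m^2} t^{\gamma(m^1,m^2)} m^1 m^2$, the sum running over the monomials of the two factors. The key structural observation making this tractable is that the $*_\gamma$-twist involves only $\gamma$, hence only the \emph{finite, local} data $u_{i,j}(m)$ and $v_{i,j}(m,m_+)$ of Definitions \ref{uv} and \ref{def:Nakajima}, and never the nonlocal quantity $\tilde u$. Thus the theorem splits into two tasks: (a) the underlying $t=1$ statement, which is the classical type-$A_r$ $T$-system \eqref{eq:classicTsystem} for $q$-characters; and (b) checking that the $\gamma$-exponents assemble into exactly $t^{0}$ on the first right-hand term and $t^{-1}$ on the second. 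Since in type $A$ the $(q,t)$-characters coincide with the $q$-characters, I would first fix the explicit tableaux-sum expansion of each of the six KR-characters involved: each is a sum over semistandard tableaux of the appropriate rectangular shape, and from each tableau $T$ one reads off the monomial $m_T$ together with its $u$- and $v$-data directly.

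For task (a) I would construct an explicit product-preserving bijection $\Phi$ from the set of monomial pairs $(m^1,m^2)$ with $m^1\in\chi_{k,j}^{(i)}$ and $m^2\in\chi_{k,j+2}^{(i)}$ onto the disjoint union of the two families of pairs on the right-hand side, such that the ordinary product $m^1 m^2$ is preserved. At the level of tableaux this is a rectangular Jacobi--Trudi / Pl\"ucker-type splitting: a pair of $i\times k$ tableaux either merges along columns into an $i\times(k+1)$ together with an $i\times(k-1)$ tableau, or, when an obstruction to this merge occurs, reorganizes along rows into an $(i-1)\times k$ and an $(i+1)\times k$ pair. This dichotomy is the combinatorial heart of the classical $T$-system and recovers \eqref{eq:classicTsystem} at $t=1$.

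For task (b) I would compute $\gamma(m^1,m^2)=d(m^1;m^2)-d(m^2;m^1)$ for each pair from Definition \ref{def:Nakajima}, feeding in only the finite $u,v$ data extracted above (in the simplified notation of Remark \ref{rmk:simplegamma}). The goal is to show that $\Phi$ preserves this exponent on the first family and lowers it by exactly one on the second. Because $d$ is bilinear in the $(u,v)$-data and $\gamma$ is antisymmetric, the bulk of the terms should cancel in the comparison under $\Phi$, and the residual $-1$ on the second family should be traced to the single extra application of some $A_{i',j'}^{-1}$, equivalently one extra unit of $v$-overlap, that distinguishes the reorganized splitting from the merged one.

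The hard part is twofold: designing $\Phi$ so that it is simultaneously product-preserving and compatible with the $\gamma$-grading, and isolating the precise source of the $t^{-1}$ in the $\gamma$-computation for the second family. I expect the $\gamma$-bookkeeping to be the genuine obstacle, since it couples the $v$-data of one factor to the $u$-data of the other across the bijection, and one must verify that every contribution cancels except the single unit responsible for $t^{-1}$. An alternative is Nakajima's original geometric route through quiver varieties, where the two right-hand terms arise from a stratification of the relevant graded variety and the $t$-powers record fiber dimensions; I would fall back on that argument if the purely combinatorial matching of $\gamma$ proves intractable.
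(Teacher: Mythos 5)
There is nothing in the paper to compare your attempt against: Theorem \ref{thm:Nakajima} is imported verbatim from Nakajima \cite{nakajima2003t}, where it is established through the geometry of graded quiver varieties, and the present paper explicitly treats it as the external input ``that this work is based on.'' Your proposal must therefore stand on its own, and as written it is a program rather than a proof. Both of its pillars are left unconstructed. For task (a), the product-preserving bijection $\Phi$ is asserted via a ``rectangular Jacobi--Trudi / Pl\"ucker-type splitting'' but never defined; the classical $T$-system for $q$-characters is ordinarily obtained representation-theoretically (from a short exact sequence for $W_{k,j}^{(i)}\otimes W_{k,j+2}^{(i)}$) rather than by an explicit tableau bijection, and it is not known a priori that any $t=1$ bijection can be chosen compatibly with the $\gamma$-grading --- which is exactly where the content of \eqref{eq:tTsystem} lives. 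For task (b), ``the bulk of the terms should cancel'' and ``the residual $-1$ should be traced to one extra unit of $v$-overlap'' are expectations, not computations: nothing in your sketch pins the exponent to exactly $0$ on the first family and exactly $-1$ on the second, uniformly over all monomial pairs. (Your two structural observations --- that $*_\gamma$ as defined in \eqref{gammaMult} involves only the finite $u,v$ data and not $\tilde u$, and that in type $A$ the $(q,t)$-characters equal the $q$-characters --- are correct, but they only make the problem well-posed, not solved.)

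A further concrete obstacle: matching ordinary products $m^1m^2$ does not determine a matching of $t$-exponents, because the same product monomial typically arises from many pairs on each side, and the theorem is an identity of $\mathbb{Z}[t,t^{-1}]$-coefficients that a $t=1$ bijection does not refine for free; one must either build the exponent-matching into $\Phi$ from the start or argue a cancellation scheme across pairs. The scale of such bookkeeping can be gauged from this very paper: proving merely that fundamental-cluster variables \emph{commute} under $*_\gamma$ --- a far weaker statement than the exact coefficients in \eqref{eq:tTsystem} --- occupies all of Section 3, requiring the explicit formula for $\gamma$ on column pairs (Theorem \ref{gamma}), the $L/N$-block taxonomy, the exchangeability analysis, and the involution $\sigma$ with a battery of cancellation lemmas. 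Your sketch supplies no analog of Theorem \ref{gamma}, no analog of $\sigma$, and no mechanism isolating the single $t^{-1}$. Finally, your declared fallback --- Nakajima's quiver-variety argument --- is not a fallback but the only actual proof on the table, and it is precisely the one the paper cites.
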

\begin{remark} \label{rmk:changeOfVar}
Equation \eqref{eq:tTsystem} can be transformed into a deformation of \eqref{eq:classicTsystem} by a change of variables $\chi_{k,j}^{(i)} \rightarrow T_{k, k+j}^{(i)}$ and relabeling $l = k+j+1$ with no changes to the coefficients. 
\end{remark}
The goal is to show that the deformed $T$-system in \eqref{eq:tTsystem} and the $(q,t)$-characters of KR-modules form a quantum cluster algebra. The exchange matrix is the adjacency matrix $B$ of the quiver $\Gamma_B$ in Figure~\ref{fig:Tsystem}. The fundamental cluster variables, given in \eqref{eq:FundClustT}, can be written in terms of $\chi_{k,j}^{(i)}$ variables as follows:
\begin{equation} \label{eq:FundClust}
\mathcal{C} = \left\{ \chi_{k,-k+(i+k+1)_2}^{(i)} \,\, \big| \,\,  i\in I, \,\, k \in \mathbb{Z}_+ \right\} \,.
\end{equation}
Notice that the fundamental cluster \eqref{eq:FundClust} is obtained from \eqref{eq:FundClustT} by the change of variables described in Remark \ref{rmk:changeOfVar}. 

\begin{remark}
Let $\chi_1$ and $\chi_2$ be $(q,t)$-characters with dominant monomials $m_+^1$ and $m_+^2$ respectively. Suppose $\chi_1$ and $\chi_2$ $t$-commute with respect to $*$. That is, there exists some $\alpha\in \mathbb{Z}$ such that
\begin{eqnarray}\label{CondIEquiv}
\chi_1*\chi_2 = t^{\alpha} \chi_2*\chi_1 \,\, ,
\end{eqnarray}
which is equivalent to
\begin{eqnarray} 
\chi_1*_\gamma \chi_2 = t^{\alpha - 2\epsilon(m_+^1,m_+^2)} \chi_2*_\gamma \chi_1\,\, .
\end{eqnarray}
Notice that $\gamma(m_+^1,m_+^2,) =0$ since $v_{i,j}(m_+,m_+)=0$ for all $i,j$ and any dominant monomial $m_+$.  That is, the dominant monomials in $ \chi_1 *_\gamma \chi_2$ and $\chi_2 *_\gamma \chi_1$ have coefficient $1$. Therefore, \eqref{CondIEquiv} holds if and only if $\alpha= 2\epsilon(m_+^1, m_+^2)$, and  $\chi_1 *_\gamma \chi_2 = \chi_2 *_\gamma \chi_1$. 
\end{remark}
  
\begin{cor}\label{gammaComm}
The fundamental cluster variables $t$-commute with respect to $*$, i.e. Condition I holds, if and only if they commute with respect to $*_\gamma$. Moreover, if the fundamental cluster variables $t$-commute, then the commutation matrix $\Lambda$, defined in \eqref{LambdaMatrix}, is given by 
\begin{eqnarray*}
\Lambda_{i,j,k}^{i',j',k'} = 2\epsilon( \mathbf{Y}_{k,j}^{(i)}, \mathbf{Y}_{k',j'}^{(i')} ) \,\, ,
\end{eqnarray*}
where $\mathbf{Y}_{k,j}^{(i)}$ is the dominant monomial of $\chi_{q,t}(W_{k,j}^{(i)})$ defined in \eqref{KRdom}. 
\end{cor}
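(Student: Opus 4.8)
The plan is to show the corollary as a direct consequence of the Remark immediately preceding it, which already establishes the general equivalence between $t$-commutation under $*$ and commutation under $*_\gamma$ for any two $(q,t)$-characters. First I would recall the content of that Remark: for $(q,t)$-characters $\chi_1,\chi_2$ with dominant monomials $m_+^1,m_+^2$, the relation $\chi_1*\chi_2 = t^\alpha \chi_2 * \chi_1$ holds for some $\alpha\in\mathbb{Z}$ if and only if $\chi_1 *_\gamma \chi_2 = \chi_2 *_\gamma \chi_1$, and in that case $\alpha = 2\epsilon(m_+^1,m_+^2)$. The key input is that $\gamma(m_+^1,m_+^2)=0$ because $v_{i,j}(m_+,m_+)=0$ for any dominant monomial, so the dominant terms on both sides of the $*_\gamma$ products carry coefficient $1$ and can be matched.

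The first half of the corollary is then purely logical: applying the Remark to each pair of fundamental cluster variables, Condition I (that all pairs $t$-commute under $*$) holds if and only if every pair commutes under $*_\gamma$. I would phrase this as quantifying the biconditional of the Remark over all pairs $\chi_{k,j}^{(i)},\chi_{k',j'}^{(i')}\in\mathcal{C}$; no new computation is required beyond noting that the Remark applies verbatim to each pair since each fundamental cluster variable is the $(q,t)$-character of a KR-module with a known dominant monomial $\mathbf{Y}_{k,j}^{(i)}$ from \eqref{KRdom}.

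For the formula for $\Lambda$, I would again invoke the ``in that case $\alpha = 2\epsilon(m_+^1,m_+^2)$'' clause of the Remark. By the definition of $\Lambda$ in \eqref{LambdaMatrix}, the exponent $\Lambda_{i,k,l}^{i',k',l'}$ is precisely the integer $\alpha$ such that $T_{k,l}^{(i)} * T_{k',l'}^{(i')} = t^\alpha T_{k',l'}^{(i')} * T_{k,l}^{(i)}$. Translating back through the change of variables in Remark \ref{rmk:changeOfVar} (so that the fundamental cluster variable indexed by $(i,k,l)$ is the $(q,t)$-character of the KR-module with dominant monomial $\mathbf{Y}_{k,j}^{(i)}$), the Remark gives $\alpha = 2\epsilon(\mathbf{Y}_{k,j}^{(i)}, \mathbf{Y}_{k',j'}^{(i')})$, which is exactly the claimed expression. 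I would take care to confirm that the index conventions in the statement (the corollary writes $\Lambda_{i,j,k}^{i',j',k'}$, apparently a relabeling of the $(i,k,l)$ triple) line up with \eqref{LambdaMatrix}, and that the $*_\gamma$-commutativity hypothesis is the one supplied by Condition I.

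The main obstacle, such as it is, is not analytical but organizational: the substantive content—that $\gamma$ vanishes on dominant monomials and hence that the commutation exponent reduces to $2\epsilon$ on dominant monomials—is entirely carried by the preceding Remark, so the corollary is essentially a bookkeeping statement. The only point demanding genuine care is verifying that $\Lambda$ is \emph{well-defined}, i.e. that the single exponent $\alpha$ governing the $*$-commutation of the full characters $\chi_1,\chi_2$ coincides with the exponent governing their dominant monomials; this is guaranteed precisely because $*_\gamma$-commutativity of the characters forces their \emph{leading} (dominant) monomials to commute with coefficient $1$, pinning down $\alpha$ unambiguously. The harder quantitative claim—that these $2\epsilon$ values are actually integers and that the cluster variables really do $t$-commute—is deferred to the combinatorial block-tableau argument of Chapter 3 and is not part of this corollary.
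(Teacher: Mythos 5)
Your proposal is correct and follows essentially the same route as the paper: the corollary is indeed an immediate consequence of the preceding Remark, with the exponent pinned down as $\alpha = 2\epsilon(m_+^1,m_+^2)$ because $\gamma$ vanishes on pairs of dominant monomials, so the dominant terms in both $*_\gamma$ products carry coefficient $1$ and force the relation. The paper gives no separate argument beyond this, so your reading of it as a bookkeeping statement, applied pairwise to the variables of $\mathcal{C}$, is exactly right.
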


\section{Proof of Condition I}
% t-commutativity of the fundamental cluster

\newcommand\X{0.5}
\newcommand\D{2}
\newcommand*{\QEDA}{\hfill\ensuremath{\blacksquare}}%
\newcommand*{\QEDB}{\hfill\ensuremath{\square}}%

The main tool we use in this section is the tableaux-sum expression for the $q$-characters of KR-modules, introduced by Nakajima in \cite{nakajima2002t}, and the mapping $m:T\mapsto m_T$ from the space of allowed tableaux, called KR-tableaux in the text and to be defined in Section \ref{tableaux-sum}, to the space of monomials $\mathcal{M}$. Let $\mathcal{B}_{k,j}^{(i)}$ be the set of all KR-tableaux parametrizing the monomials of $\chi_{q}(W_{k,j}^{(i)})$.  

The idea of the proof of Condition I is as follows. We define a division of the set $\mathcal{B}_{k,j}^{(i)} \times\mathcal{B}_{k',j'}^{(i')}$ into $3$ disjoint subsets $\mathcal{P}_0, \mathcal{P}_1, \mathcal{P}_{-1}$ and an automorphism $\sigma$ on this set such that 
\begin{itemize}
\item $\sigma$ fixes the elements of $\mathcal{P}_0$ and is an involution between $\mathcal{P}_1$ and $\mathcal{P}_{-1}$
\item $m_{(C,T)} = m_{\sigma(C,T)}$, where $m_{(C,T)} := m_Cm_T$. 
\item $\gamma(m_C,m_T) =: \gamma(C,T) = -\gamma(\sigma(C,T))$.  
\end{itemize}
The main result of this section is:
\begin{thm} \label{thm:main1}
Let $(C,T)\in \mathcal{P}_0$, i.e. $(C,T)$ is a fixed point of $\sigma$. Then $\gamma(C,T)=0$. 
\end{thm}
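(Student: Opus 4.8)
The plan is to prove the statement by computing $\gamma(C,T)$ explicitly from the tableau data and exhibiting a cancellation forced by the condition $\sigma(C,T)=(C,T)$, rather than by appealing to any formal antisymmetry. Indeed $\gamma$ is antisymmetric in its two arguments, so being a fixed point cannot by itself make it vanish; the content must be combinatorial. Writing $\gamma(C,T)=d(m_C,m_T)-d(m_T,m_C)$, I would first localize each term to the boxes of the two KR-tableaux. Since a monomial $m_C$ factors as a product of one elementary factor per box of $C$, each of $u_{i,j}(m_C)$, $v_{i,j}(m_C,m_+^{C})$, $u_{i,j}(m_+^{C})$ (and the analogous data for $T$) is additive over boxes, with $m_+^{C}$ read off from the highest-weight tableau; and because $\gamma$, unlike $\epsilon$, involves only the finitely-supported quantities $u$ and $v$ (not $\tilde u$), this localization is legitimate and finite. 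Consequently $d$ is bilinear in the box data, so $d(m_C,m_T)=\sum_{b\in C}\sum_{b'\in T} d_{\mathrm{loc}}(b,b')$ for an explicit local pairing $d_{\mathrm{loc}}$ depending only on the positions and entries of $b$ and $b'$, and $\gamma(C,T)=\sum_{b,b'}\bigl(d_{\mathrm{loc}}(b,b')-d_{\mathrm{loc}}(b',b)\bigr)$ becomes a sum of antisymmetrized box-pair contributions.

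Next I would invoke the description of the fixed-point set $\mathcal{P}_0$ coming from the construction of $\sigma$. The involution is designed to detect a \emph{swappable} configuration — a pair of boxes, or of blocks, whose relative position contributes to $\gamma$ asymmetrically — and to exchange it; a pair $(C,T)$ is therefore fixed exactly when no such configuration occurs, i.e. when the two tableaux sit in \emph{aligned} (nested) relative position with respect to the block-tableau structure. I would then phrase the goal as follows: for an aligned pair, every antisymmetrized contribution $d_{\mathrm{loc}}(b,b')-d_{\mathrm{loc}}(b',b)$ either vanishes on its own or cancels against that of a uniquely determined partner pair, so that $\gamma(C,T)$ collapses to $0$.

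To carry this out I would pass from boxes to blocks: within a single block the boxes occupy consecutive positions along a diagonal, so their mutual contributions telescope, reducing the box-level identity to a block-level one. It then remains to match the blocks of $C$ against those of $T$. The alignment hypothesis should guarantee that the two families of blocks interleave monotonically, and for each interleaved pair of blocks one must verify that the $v$-against-$u$ term of $d(m_C,m_T)$ equals the corresponding term of $d(m_T,m_C)$ after accounting for the shift $j\mapsto j+1$ built into the definition of $d$.

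That shift is the crux, and the step I expect to be the main obstacle. It is precisely the $j\mapsto j+1$ asymmetry in $d$ that makes $\gamma$ nonzero in general (on $\mathcal{P}_1\cup\mathcal{P}_{-1}$, where it is killed only by pairing $(C,T)$ with $\sigma(C,T)$), and the substance of the theorem is that the alignment condition defining $\mathcal{P}_0$ neutralizes it. Establishing this requires a case analysis on the relative vertical offset (difference of the $i$-indices) and horizontal offset (difference of the $j$-indices) of two interacting blocks, using the explicit values of $u$ and $v$ on type-$A$ KR-tableaux. I anticipate that the boundary conventions $T^{(0)}=T^{(r+1)}=1$ together with the nesting forced by alignment are what ultimately make the two shifted sums coincide; checking this matching in every relative position is the genuinely combinatorial heart of the argument, and is not reducible to the formal antisymmetry of $\gamma$.
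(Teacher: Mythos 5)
There is a genuine gap, and it sits exactly where you placed your trust: in the characterization of the fixed-point set. $\mathcal{P}_0$ is defined as the set of pairs admitting no \emph{exchangeable} sequence, and exchangeability is a global validity condition (the tableaux obtained after swapping must still satisfy the strictly-increasing-column and weakly-increasing-diagonal conditions of Definition \ref{defn:KRtableau}); it is \emph{not} the absence of asymmetric local configurations, and fixed points of $\sigma$ are not ``aligned/nested'' pairs. A fixed point can, and typically does, contain $L$-strips, each contributing $\pm 1$ to $\gamma$: what blocks the swap is not the strip's own geometry but an external obstruction --- failure of column-compatibility (the strip is the last one and is not closed off by an $LU$-block) or failure of the left/right-compatibility conditions of Definition \ref{defn:compat}, which involve boxes in \emph{neighboring columns}. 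The simplest instance already defeats your plan: an (anti-)fundamental pair of single columns whose unique $L^-$-strip is not column-compatible lies in $\mathcal{P}_0$ and carries a local contribution $-1$; this is cancelled not by a partner configuration interleaving with it but by the boundary term $N_{t+1}=+1$ in the closed formula $\gamma(C,T)=\sum_p L_p+(\text{boundary})$ of Theorem \ref{gamma} (this is precisely Lemma \ref{gammaContr}). So your central step --- ``for an aligned pair, every antisymmetrized contribution either vanishes on its own or cancels against a uniquely determined partner pair'' --- has no hypothesis to run on, and the monotone interleaving of blocks you invoke is simply unavailable.

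To be fair, the first third of your outline is sound and matches the paper: $u$ and $v$ are additive over boxes, $\gamma$ involves only finitely supported data, and $\gamma(C,T)=\sum_{i,j}\gamma(C_i,T_j)$ (Remark \ref{rmk:gammaAdditive}); your box-to-block telescoping with attention to the $j\mapsto j+1$ shift is essentially the derivation of Theorem \ref{gamma}. But after that point the actual proof is structured quite differently from what you anticipate: the cancellation is \emph{cross-column}. To each non-right-compatible $L^-$-strip in $(C_i,T_j)$ that contributes to $\gamma$, one constructs a unique non-left-compatible $L^+$-strip in the adjacent pair $(C_{i+1},T_j)$ (Lemma \ref{chain1}), with the residual cases --- regular pairs with non-column-compatible strips, and anti-fundamental pairs where the strip's contribution is already absorbed by the boundary term and must be shown to have \emph{no} partner (Lemmas \ref{chain2} through \ref{chain5}, together with Lemma \ref{fundClustShape} on the shapes occurring in the fundamental cluster) --- followed by an induction that strips the columns of $T$ one at a time. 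Your proposal confines all cancellation to block pairs inside a single aligned pair and therefore cannot see these pairings between different column pairs, nor the cases where a contributing strip is cancelled by a boundary term rather than by a partner; the plan fails at its central step rather than merely leaving details unchecked.
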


The proof of Theorem \ref{thm:main1} is given in Section \ref{proofOfMain1}. Let us show how Theorem \ref{thm:main1} implies Condition I. 

\begin{cor}
The fundamental cluster variables $t$-commute with respect to the twisted multiplication $*$ defined in \eqref{twistedMult}. 
\end{cor}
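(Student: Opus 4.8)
The plan is to show that the desired corollary is an immediate consequence of Theorem~\ref{thm:main1} combined with the machinery already assembled, namely Corollary~\ref{gammaComm} and the decomposition of $\mathcal{B}_{k,j}^{(i)}\times\mathcal{B}_{k',j'}^{(i')}$ into the three pieces $\mathcal{P}_0,\mathcal{P}_1,\mathcal{P}_{-1}$ together with the involution $\sigma$. By Corollary~\ref{gammaComm}, it suffices to prove that any two fundamental cluster variables commute with respect to the $*_\gamma$ multiplication, so I would reduce the whole statement to establishing that $\chi_{k,j}^{(i)}*_\gamma\chi_{k',j'}^{(i')}=\chi_{k',j'}^{(i')}*_\gamma\chi_{k,j}^{(i)}$ for the relevant indices occurring in the fundamental cluster $\mathcal{C}$.

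First I would expand both sides in the tableaux-sum notation. Using the map $m\colon T\mapsto m_T$, the product $\chi_{k,j}^{(i)}*_\gamma\chi_{k',j'}^{(i')}$ is a sum over pairs $(C,T)\in\mathcal{B}_{k,j}^{(i)}\times\mathcal{B}_{k',j'}^{(i')}$ of terms $t^{\gamma(C,T)}m_Cm_T$, by the definition of $*_\gamma$ in \eqref{gammaMult} and the form $m^1*_\gamma m^2=t^{\gamma(m^1,m^2)}m^1m^2$. The key point is then to pair up the terms via $\sigma$. I would partition the sum according to $\mathcal{P}_0$, $\mathcal{P}_1$, $\mathcal{P}_{-1}$. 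For the terms indexed by $\mathcal{P}_1\cup\mathcal{P}_{-1}$, the involution property of $\sigma$ (that it is a bijection $\mathcal{P}_1\to\mathcal{P}_{-1}$), together with the two listed identities $m_{(C,T)}=m_{\sigma(C,T)}$ and $\gamma(C,T)=-\gamma(\sigma(C,T))$, shows that these terms organize into matched pairs $t^{\gamma(C,T)}m_{(C,T)}+t^{-\gamma(C,T)}m_{(C,T)}$, a manifestly $\gamma\leftrightarrow-\gamma$ symmetric expression. For the fixed points in $\mathcal{P}_0$, Theorem~\ref{thm:main1} gives $\gamma(C,T)=0$, so each such term is simply $m_{(C,T)}$, which is already symmetric under swapping the two factors.

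The final step is to observe that this symmetry of the coefficients is exactly what forces $*_\gamma$-commutativity. Swapping the two characters corresponds to sending each pair $(C,T)$ to $(T,C)$ and replacing $\gamma(C,T)$ by $\gamma(T,C)=-\gamma(C,T)$, using the anti-symmetry of $\gamma$ noted after Definition~\ref{def:Nakajima}. Since $m_Cm_T=m_Tm_C$ in the commutative ring of monomials, the term-by-term matching just described shows that the full sum is invariant under this swap: the $\mathcal{P}_0$-terms are unchanged because their exponent is $0$, and the $\mathcal{P}_1/\mathcal{P}_{-1}$-terms are permuted among themselves by the combined action of $\sigma$ and the swap. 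Hence $\chi_{k,j}^{(i)}*_\gamma\chi_{k',j'}^{(i')}=\chi_{k',j'}^{(i')}*_\gamma\chi_{k,j}^{(i)}$, and Corollary~\ref{gammaComm} upgrades this to the $t$-commutativity with respect to $*$, which is precisely Condition~I.

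I expect the only genuine content here to be Theorem~\ref{thm:main1} itself; the corollary is a bookkeeping argument once that theorem and the properties of $\sigma$ are in hand. The main obstacle to watch for is making sure the decomposition and involution $\sigma$ are compatible with the specific index shifts defining $\mathcal{C}$ in \eqref{eq:FundClust}, and that the anti-symmetry of $\gamma$ is applied to the correct dominant monomials (as streamlined in Remark~\ref{rmk:simplegamma}); but these are consistency checks rather than new difficulties.
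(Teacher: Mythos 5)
Your proposal is correct and follows essentially the same route as the paper's own proof: reduce to $*_\gamma$-commutativity via Corollary~\ref{gammaComm}, expand both products over pairs of KR-tableaux, split the index set into $\mathcal{P}_0$, $\mathcal{P}_1$, $\mathcal{P}_{-1}$, and use Theorem~\ref{thm:main1} on the $\sigma$-fixed points together with $m_{(C,T)}=m_{\sigma(C,T)}$, $\gamma(\sigma(C,T))=-\gamma(C,T)=\gamma(T,C)$ on the matched pairs. Your closing caveats (index bookkeeping for $\mathcal{C}$ and applying anti-symmetry of $\gamma$ to the correct dominant monomials) are indeed only consistency checks, exactly as the paper treats them.
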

\begin{proof}
By Corollary \ref{gammaComm}, Condition I is equivalent to the statement that elements in $\mathcal{C}$ (see \eqref{eq:FundClust}) commute with respect to $*_\gamma$. That is, given any $\chi_{k,j}^{(i)},\chi_{k',j'}^{(i')}\in \mathcal{C}$, we want to show
\begin{equation} \label{Lambda}
\chi_{k,j}^{(i)}* _\gamma \chi_{k',j'}^{(i')} = \chi_{k',j'}^{(i')} *_\gamma \chi_{k,j}^{(i)}
\end{equation}
Let's write
\begin{eqnarray*}
\chi_{k,j}^{(i)} = \sum_{C\in \mathcal{B}_{k,j}^{(i)}} m_C \text{ and }\chi_{k',j'}^{(i')} = \sum_{T\in \mathcal{B}_{k',j'}^{(i')}} m_T \,\, ,
\end{eqnarray*}
where $\mathcal{B}_{k,j}^{(i)}$ and $\mathcal{B}_{k',j'}^{(i')}$ are the sets of allowed KR-tableaux. Denote $\mathcal{B} = \mathcal{B}_{k,j}^{(i)} \times \mathcal{B}_{k',j'}^{(i')}$. Then,
\begin{eqnarray*}
\chi_{k,j}^{(i)} *_\gamma\chi_{k',j'}^{(i')} &=& \sum_{(C,T) \in \mathcal{B}} t^{\gamma(C,T)} m_Cm_T \\
&=& \underbrace{     \sum_{(C,T) \in \mathcal{P}_0\cap \mathcal{B}}  t^{\gamma(C,T)} m_{(C,T)} }_\text{$\gamma(C,T)=0$ by Theorem \ref{thm:main1}} + \sum_{(C,T) \in \mathcal{P}_1 \cap \mathcal{B}}  \left(t^{\gamma(C,T)} m_{(C,T)} + t^{\gamma(\sigma(C,T))} m_{\sigma(C,T)}\right)  \\
&=& \sum_{(C,T)\in \mathcal{P}_0 \cap \mathcal{B} } m_{T}m_{C} +  \sum_{ (C,T)\in \mathcal{P}_1 \cap \mathcal{B}} \left( t^{-\gamma(T,C)}  m_{T}m_{C} + t^{ \gamma(T,C)} m_{T}m_{C} \right)\\
&=&\chi_{k',j'}^{(i')}*_\gamma \chi_{k,j}^{(i)}
\end{eqnarray*}
where we used $\gamma(\sigma(C,T)) = -\gamma(C,T) = \gamma(T,C)$ and $m_{(C,T)} = m_Cm_T = m_{(T,C)}$. 
\end{proof}

%%%%%%%%%%%%%%%%%%%%%%%%%%%%%%%%%%%%%%%%%%%%%%%%%%%%%%%%%%%%%%%%
%%%%%%%%%%%%%%%%%%%%%%%%%%%%%%%%%%%%%%%%%%%%%%%%%%%%%%%%%%%%%%%%
\subsection{Tableaux-sum notation} \label{tableaux-sum}
The tableaux-sum notation we are using is a slight adjustment of the notation introduced in \cite{nakajima2002t}. Let us begin with the tableaux-sum description of the ordinary character $\chi(V)$ for $V\in \mathbf{R}$ as a motivation to the later definitions of the tableaux-sum notation for $q$-characters.

Let $V_\lambda$ be the highest weight irreducible $U_q(\mathfrak{sl}_{r+1})$-module with highest weight $\lambda = \sum_{i=1}^r \lambda_i \omega_i$. Then there exists a basis of $V_\lambda$ parametrized by semi-standard Young tableaux of shape $\Lambda = (\Lambda_1, \ldots, \Lambda_r)$, where $\Lambda_j = \sum_{i=j}^r \lambda_i$. Let $S(\Lambda)$ be the set of all semi-standard Young tableaux on the letters $\left\{1,2, \ldots, r+1\right\}$ of shape $\Lambda$. We define a map
\begin{eqnarray} \label{map:mon}
\begin{array}{cccc}m: &S(\Lambda) &\rightarrow & \multicolumn{1}{l}{\mathbb{Z}[y_i^{\pm 1}]_{i\in I}}\\
&&&\\
&T&\mapsto& m_T = \prod_{i\in I} y_i^{ \#T(i) - \# T(i+1)} \end{array}
\end{eqnarray}
where $\#T(i)$ is the number of times $i$ appears in $T$. With this mapping, we obtain the tableaux-sum expression for the character of $V_\lambda$. More precisely, 
\begin{eqnarray*}
\chi(V_\lambda) = \sum_{T \in S(\Lambda)} m_T\,\, .
\end{eqnarray*}

\begin{remark} \label{rectangularT}
The $U_q(\g)$-highest weight of the KR-module $W_{k,j}^{(i)}$ is given by $k\omega_i$. Therefore, the tableaux that parametrize the character of KR-modules, considered as $U_q(\g)$-modules, are rectangular of length $i$ and width $k$.  
\end{remark}

\begin{example}\label{ex:finite}
Consider $\g = \mathfrak{sl}_4$ and $V=V_{\omega_3}$. Then, $\Lambda = (1,1,1)$. 
\ytableausetup{mathmode,boxsize=1em}

\vspace{0.1in}
\begin{center}
\begin{tikzpicture}

\node at (-2,2) {$\chi(V_{\omega_3})$};
\node at (-1,2) {$=$};

\node (1') at (0,2) {$y_3$};

\node (1) at (0,0) {
\begin{ytableau}
1\\
2\\
3
\end{ytableau}
};

\node (2') at (2,2) { $y_2y_3^{-1}$};

\node (2) at (2,0) {
\begin{ytableau}
1\\
2\\
4
\end{ytableau}
};

\node (3') at (4,2) { $y_1y_2^{-1}$};

\node (3) at (4,0) {
\begin{ytableau}
1\\
3\\
4
\end{ytableau}
};

\node (4') at (6,2) { $y_1^{-1}$};

\node (4) at (6,0) {
\begin{ytableau}
2\\
3\\
4
\end{ytableau}
};

\node at (1,2) {$+$};
\node at (3,2) {$+$};
\node at (5,2) {$+$};

\path[->,thick]

(1) edge node[above] {$a_3^{-1}$}  (2)
(2) edge node[above] {$a_2^{-1}$}  (3)
(3) edge node[above] {$a_1^{-1}$} (4)
(1') edge[dotted] (1)
(2') edge[dotted] (2)
(3') edge[dotted] (3)
(4') edge[dotted] (4)
;

\end{tikzpicture}
\end{center}
\vspace{0.1in}
Recall that the monomials $a_i$ from Definition \ref{finiteA}, are identified with $e^{\alpha_i}$. Therefore, multiplying a monomial by $a_i^{-1}$ is equivalent to applying a lowering operator with weight $-\alpha_i$. The action of $a_i^{-1}$ on the tableaux is given by changing a box with $i$ to $i+1$. \qed
\end{example}

We now describe the tableaux-sum notation for $q$-characters. Recall that the restriction of $U_q(\widehat{\g})$-modules to $U_q(\g)$ corresponds to the map $Y_{i,j} \rightarrow y_i$ on the monomials, and the finite weight of $Y_{i,j}$ and $y_i$ are the same, given by $\omega_i$. Therefore, the tableau representation of $Y_{i,j}$ is almost identical to that of $y_i$, except there are infinitely many $Y_{i,j}$'s corresponding to $j\in \mathbb{Z}$. This infinite property is represented by an extra vertical coordinate added to the usual tableau data. 

\begin{defn}
\begin{enumerate}
\item Let $T$ be a diagram consisting of a single column of length $i$ equipped with an additional datum $j\in \mathbb{Z}$. To each box in $T$, we associate an \emph{index} as follows: the top box gets an index $\frac{1-i -j}{2}$ and the indices of the lower boxes increase by one starting from the top box. We call the set of indices of $T$ the \emph{support} of $T$, denoted $\rm{Supp}(T)$. Then,
\begin{eqnarray*}
\rm{Supp}(T) = \left\{ \frac{1-i-j}{2}, \ldots, \frac{1 + i - j}{2} \right\}
\end{eqnarray*}
The diagrams that we will consider in this paper will have $i$ and $j$ such that $(1-i-j)$ is always divisible by $2$. That is, $\supp(T)\subset \mathbb{Z}$. We say $T$ is a \emph{column diagram} of shape $(i,j)$.  
\item The \emph{head} of a column diagram $T$, denoted $\head(T)$, is the index of the first box.
\item The \emph{tail} of a column diagram $T$, denoted $\tail(T)$, is the index of the last box.
\item The \emph{length} of a column diagram $T$ is given by $i=\tail(T)-\head(T)+1$. For convenience, we denote it $\length(T)$. 
\item A \emph{column tableau} $T$ is a column diagram $T$ of some shape $(i,j)$ decorated with letters $\left\{1,2,\ldots, r+1\right\}$, i.e. we equip $T$ with an arbitrary map
 \begin{equation*}
 \supp(T) \rightarrow \left\{1,2,\ldots, r+1\right\} \,,
 \end{equation*}
 where $r$ is the rank of the Lie algebra $\g$. Equivalently, a column tableau is a column diagram with integers between $1$ and $r+1$ filled in each box.
 
The image of the map at $p\in \supp(T)$, i.e. the integer in the box with index $p$, is denoted $T[p]$ and is called the \textsl{value} of $T$ at $p$. If $p<\head(T)$, we set $T[p]=0$ and if $p>\tail(T)$, we set $T[p]=\infty$. With this redefinition, we can consider $T$ to be defined for all $p\in \mathbb{Z}$ and $\supp(T)$ is where the value of $T$ is nonzero and finite. 
\item A \emph{strip} in $T$ between $p_0$ and $p_1$, denoted $T[p_0,p_1]$, is the tableau given by the piece of $T$ between indices $p_0$ and $p_1$ with end points included. 
\item A \emph{general tableau} is obtained by stacking column tableaux horizontally with the indices of the boxes, defined in (1), aligned. 
\end{enumerate}
\end{defn}

Let $T=(T_1,\ldots, T_k)$ be a general tableau. Let $\supp(T) = \cup \supp(T_l)$. We identify $T$ with a monomial $m_T\in \mathcal{M}$ given by 
\begin{eqnarray} \label{map:qmon}
m_T= \prod_{p\in \rm{Supp}(T)} \prod_{i=1}^r Y_{i,i-2p-1}^{\#(T[p]=i) - \#(T[p+1]=i+1)} \,, 
\end{eqnarray}
where $\#(T[p]=i)$ is the number of times $i$ appears in $T$ at index $p$. This is the affine analog of the map \eqref{map:mon}. 
\begin{remark}
If we drop the indices of the tableaux and collapse all the columns until the heads of every column are on the same level, then we obtain Young tableaux, which gives the classical tableaux-sum notation for the highest weight modules. In particular, if we collapse the general tableau, then the map \eqref{map:qmon} reduces to the map \eqref{map:mon}. Diagramaticaly, we can add a third column to the commutative diagram 
\[
\begin{tikzcd}
\widehat{\mathbf{R}} \arrow{r}{\chi_q} \arrow[swap]{d}{\rm{res}} & \mathbb{Z}[Y_{i,j}^{\pm 1}]_{i\in I, j\in \mathbb{Z}} \arrow{d}{p}  &\left\{\text{General Tableaux}\right\} \arrow{l}[swap]{\eqref{map:qmon}} \arrow{d}{collapse}\\
\mathbf{R}  \arrow{r}{\chi} & \mathbb{Z}[y_i^{\pm 1}]_{i\in I}& \left\{\text{Tableaux}\right\} \arrow{l}[swap]{\eqref{map:mon}}
\end{tikzcd}
\]

\end{remark}

\begin{defn}
Let $T$ be a column tableau of shape $(i,j)$. We define $T_{dom}$ (dominant) to be the column tableau of the same underlying diagram given by
\begin{eqnarray*}
T_{dom}[\head(T)] = 1 &\text{ and }&T_{dom}[p+1] = T_{dom}[p]+1\,\,,\,\,\,\, \text{ for } p\in \supp(T)
\end{eqnarray*}
\end{defn}
\begin{remark}
Let $T$ be of shape $(i,j)$. Then the map \eqref{map:qmon} identifies $m_{T_{dom}}=Y_{i,j}$. In other words, $T_{dom}$ corresponds to the dominant monomial of all column tableaux $T$ of shape $(i,j)$. More generally, the tableaux corresponding to the descendants of any dominant monomial (not necessarily column) will have the same underlying diagram as their dominant monomial, i.e. the lengths and the indices of each column will be the same.  
\end{remark}

Let us illustrate all these definitions with an example.
\begin{example}
Let $r\geq 5$. Let $T=(T_1,T_2)$ be the general tableau consisting of columns of shapes $\left\{(3,2), (2,1)\right\}$. The first column diagram $T_1$ is of shape $(3,2)$, which means the length of $T_1$ is $3$ and $h(T_1)=\frac{1-i_1-j_1}{2}=-2$. Similarly, the second column diagram $T_2$ is of length $2$ and $h(T_2)=-1$. An example of such general tableau is given below, together with the corresponding collapsed tableau and its dominant tableau.  
\begin{center}
\begin{tikzpicture}
\node at (-3, -2) {General Tableaux};
\node at (0,-0.5) {diagram of $T$};
\node (1) at (0,-2) {$
\begin{ytableau}
\none[\scriptscriptstyle{-2}]&\\
\none[\scriptscriptstyle{-1}] & &\\
\none[\scriptscriptstyle{0}]& &\\
\end{ytableau} $};

\node at (3,-0.5) {tableau $T$};
\node (2) at (3,-2) {$
\begin{ytableau}
\none[\scriptscriptstyle{-2}]&1\\
\none[\scriptscriptstyle{-1}] &3 &4\\
\none[\scriptscriptstyle{0}]&4 &5\\
\end{ytableau} $};

\node at (6,-0.5) {$T_{dom}$};
\node (3) at (6,-2) {$
\begin{ytableau}
\none[\scriptscriptstyle{-2}]&1\\
\none[\scriptscriptstyle{-1}] &2 &1\\
\none[\scriptscriptstyle{0}]&3 &2\\
\end{ytableau} $};

\node at (9, -0.5) {$m_{T_{dom}}$};
\node at (9,-2) {$Y_{3,2}Y_{2,1}$};

\node at (-3, -4) {Collapsed Tableaux};

\node (4) at (0,-4) {$
\begin{ytableau}
\none[]&&\\
\none[] & &\\
\none[]& \\
\end{ytableau} $};

\node (2) at (3,-4) {$
\begin{ytableau}
\none&1&4\\
\none&3 &5\\
\none&4\\
\end{ytableau} $};

\node (2) at (6,-4) {$
\begin{ytableau}
\none&1&1\\
\none&2 &2\\
\none&3\\
\end{ytableau} $};

\node at (9,-4) {$y_3y_2$};

%\path[->]
%(1) edge node[above] {$\scriptstyle{p = \frac{1-i-j}{2}}$} (2)
%;

\end{tikzpicture}
\end{center}

\qed
\end{example}

\begin{example}
Consider $\g=\mathfrak{sl}_4$. Let $V$ be the fundamental $U_q(\widehat{\mathfrak{sl}_4})$-module with dominant monomial $Y_{3,0}$. The $U_q(\mathfrak{sl}_4)$-highest weight of this module is $\omega_3$. This is the affine analog of Example \ref{ex:finite}. We give the $q$-character of $V$ below:
\vspace{0.1in}
\begin{center}
\begin{tikzpicture}

\node at (-2,2) {$\chi_q(V)$};
\node at (-1,2) {$=$};

\node (1') at (0,2) {$Y_{3,0}$};

\node (1) at (0,0) {
\begin{ytableau}
\none[{\color{blue}\scriptstyle{-1}}]&1\\
\none[{\color{blue}\scriptstyle{0}}]&2\\
\none[{\color{blue}\scriptstyle{1}}]&{\color{red}3}
\end{ytableau}
};

\node (2') at (3,2) { $Y_{2,1}Y_{3,2}^{-1}$};

\node (2) at (3,0) {
\begin{ytableau}
\none[{\color{blue}\scriptstyle{-1}}]&1\\
\none[{\color{blue}\scriptstyle{0}}]&{\color{red}2}\\
\none[{\color{blue}\scriptstyle{1}}]&4
\end{ytableau}
};

\node (3') at (6,2) { $Y_{1,2}Y_{2,3}^{-1}$};

\node (3) at (6,0) {
\begin{ytableau}
\none[{\color{blue}\scriptstyle{-1}}]&{\color{red}1}\\
\none[{\color{blue}\scriptstyle{0}}]&3\\
\none[{\color{blue}\scriptstyle{1}}]&4
\end{ytableau}
};

\node (4') at (9,2) { $Y_{1,4}^{-1}$};

\node (4) at (9,0) {
\begin{ytableau}
\none[{\color{blue}\scriptstyle{-1}}]&2\\
\none[{\color{blue}\scriptstyle{0}}]&3\\
\none[{\color{blue}\scriptstyle{1}}]&4
\end{ytableau}
};

\node at (1.5,2) {$+$};
\node at (4.5,2) {$+$};
\node at (7.5,2) {$+$};

\path[->,thick]

(1) edge node[above] {$A_{{\color{red}3},{\color{red}3}-2{\color{blue}\cdot 1} }^{-1}$}  (2)
(2) edge node[above] {$A_{{\color{red}2}, {\color{red}2} - 2\cdot {\color{blue} 0}}^{-1}$}  (3)
(3) edge node[above] {$A_{{\color{red}1}, {\color{red} 1} - 2 \cdot ( {\color{blue} -1} )}^{-1}$} (4)
(1') edge[dotted] (1)
(2') edge[dotted] (2)
(3') edge[dotted] (3)
(4') edge[dotted] (4)
;

\end{tikzpicture}
\end{center}
\vspace{0.1in}

\qed
%\QEDB

\end{example}
Let $T=(T_1,\ldots,T_k)$ be a general tableaux. The monomial $m_T$ in \eqref{map:qmon} can also be written as:
\begin{eqnarray} \label{eq:aAction}
m_T = m_{T_{dom}} \prod_{l=1}^k \prod_{p\in \supp(T_l)} \prod_{i=(T_l)_{dom}[p]}^{T_l[p]-1} A_{i,i-2p}^{-1}\,.
\end{eqnarray}

We have presented so far the modified crystal basis for $q$-character monomials in \cite{nakajima2002t}. From here on, we define new concepts. 

Consider KR-modules and the associated general tableaux. Consider the KR-module with dominant monomial $\mathbf{Y}_{k,j}^{(i)}=Y_{i,j}Y_{i,j+2}\cdots Y_{i,j+2k-2}$. The diagram associated with this dominant monomial and all its descendants consists of $k$ columns stacked as follows:
\vspace{0.2in}
\begin{center}
\begin{tikzpicture}
\node at (0,0) {$T$};
\node at (1,0) {$=$};
\node at (3,0) {$
\begin{ytableau}
\none[\scriptstyle{T_1}]&\none[\scriptstyle{T_2}]&\none&\none&\none&\none[\scriptstyle{T_{k-1}}]&\none[\scriptstyle{T_k}]\\
\none	&\none&\none	&\none&\none&\none&\\
\none	&\none&\none 	&\none &\none&&\\
\none	&&\none&	\none	&\none&&\\
		&&\none&	\none[\cdots]&\none&\none[\vdots]&\none[\vdots]	\\
		&&\none&	\none &\none	&&\\
\none[\vdots]&\none[\vdots]&\none &\none& \none &\\
		& &\none&\none\\
		&\none\\
\end{ytableau}$};

\draw [
	thick,
	decoration = {
		brace,
		mirror,
		raise=0.5cm
		},
		decorate
	] (1.4,-1.7) -- (4.5,-1.7)
node [pos=0.5, anchor=north,yshift=-0.55cm] {$k$ columns};

\draw [
	thick,
	decoration = {
		brace,
		raise=0cm
		},
		decorate
	] (4.7,1.5) -- (4.7,-0.5)
node [pos=0.5, anchor=north,xshift=0.5cm,yshift=0.2cm] {$i$ };

\end{tikzpicture}
\end{center}
where each column $T_l$, $1\leq l\leq k$, is of length $i$ with appropriate indices determined by $j_l=j+2l-2$. We call a diagram of this shape the staircase diagram of shape $(i,j,k)$. Notice that the corresponding collapsed diagram is rectangular, as expected by Remark \ref{rectangularT}. 
\begin{defn} \label{defn:KRtableau}
Define $\mathcal{B}_{k,j}^{(i)}$ to be the set of general tableaux given by staircase diagrams $T=(T_1,\ldots, T_k)$ of shape $(i,j,k)$ along with decorations by $\left\{1,2,\ldots, r+1\right\}$ such that
\begin{enumerate}
\item the values of the columns strictly increase from top to bottom, i.e. $T_i[p] < T_i[p+1]$ for all $i,p$. (see Figure~\ref{fig:KRarrows}, left)
\item the values of the diagonals weakly increase from left to right, i.e. $T_i[p] \leq T_{i+1}[p-1]$ for all $i,p$. (see Figure~\ref{fig:KRarrows}, right)
\end{enumerate}

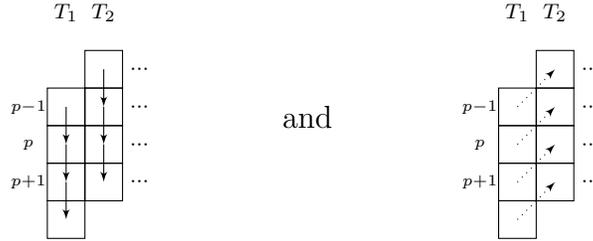
\begin{figure}[h]
\begin{center}
\begin{tikzpicture}

\node at (0,0) {\begin{tikzpicture}
%\node at (-1,3) {(1)};
\node at (0.5*\X, \X*6) {$\scriptstyle{T_{1}}$};
\draw (0,0) rectangle (\X*1,\X*1) node[fitting node] (00) {};
\draw (0, \X*1) rectangle (\X*1, \X*2) node[fitting node] (01) {};
\draw (0, \X*2) rectangle (\X*1, \X*3) node[fitting node] (02) {};
\draw (0, \X*3) rectangle (\X*1, \X* 4) node[fitting node] (03) {};
\node at ($(03.center)-(\X,0)$) {$\scriptscriptstyle p-1$};
\node at ($(02.center)-(\X,0)$) {$\scriptscriptstyle p$};
\node at ($(01.center)-(\X,0)$) {$\scriptscriptstyle p+1$};

\node at (1.5*\X, \X*6) {$\scriptstyle{T_2}$};
\draw (\X*1,\X*1) rectangle (\X*2,\X*2) node[fitting node] (11) {};
\draw (\X*1,\X*2) rectangle (\X*2,\X*3) node[fitting node] (12) {};
\draw (\X*1,\X*3) rectangle (\X*2,\X*4) node[fitting node] (13) {};
\draw (\X*1,\X*4) rectangle (\X*2,\X*5) node[fitting node] (14) {};
\node at ($(11.center)+(\X,0)$) {$ \scriptstyle \cdots$};
\node at ($(12.center)+(\X,0)$) {$ \scriptstyle \cdots$};
\node at ($(13.center)+(\X,0)$) {$ \scriptstyle \cdots$};
\node at ($(14.center)+(\X,0)$) {$ \scriptstyle \cdots$};

\draw[arrows={- latex'}] (03.center) -- (02.center);
\draw[arrows={- latex'}] (02.center) -- (01.center);
\draw[arrows={- latex'}] (01.center) -- (00.center);

\draw[arrows={- latex'}] (14.center) -- (13.center);
\draw[arrows={- latex'}] (13.center) -- (12.center);
\draw[arrows={- latex'}] (12.center) -- (11.center);
\end{tikzpicture}};

\node at (3,0) {and};

\node at (6,0) {\begin{tikzpicture}
%\node at (-1,3) {(2)};
\node at (0.5*\X, \X*6) {$\scriptstyle{T_{1}}$};
\draw (0,0) rectangle (\X*1,\X*1) node[fitting node] (00) {};
\draw (0, \X*1) rectangle (\X*1, \X*2) node[fitting node] (01) {};
\draw (0, \X*2) rectangle (\X*1, \X*3) node[fitting node] (02) {};
\draw (0, \X*3) rectangle (\X*1, \X* 4) node[fitting node] (03) {};
\node at ($(03.center)-(\X,0)$) {$\scriptscriptstyle p-1$};
\node at ($(02.center)-(\X,0)$) {$\scriptscriptstyle p$};
\node at ($(01.center)-(\X,0)$) {$\scriptscriptstyle p+1$};

\node at (1.5*\X, \X*6) {$\scriptstyle{T_2}$};
\draw (\X*1,\X*1) rectangle (\X*2,\X*2) node[fitting node] (11) {};
\draw (\X*1,\X*2) rectangle (\X*2,\X*3) node[fitting node] (12) {};
\draw (\X*1,\X*3) rectangle (\X*2,\X*4) node[fitting node] (13) {};
\draw (\X*1,\X*4) rectangle (\X*2,\X*5) node[fitting node] (14) {};
\node at ($(11.center)+(\X,0)$) {$ \scriptstyle \cdots$};
\node at ($(12.center)+(\X,0)$) {$ \scriptstyle \cdots$};
\node at ($(13.center)+(\X,0)$) {$ \scriptstyle \cdots$};
\node at ($(14.center)+(\X,0)$) {$ \scriptstyle \cdots$};

\draw[arrows={- latex'},dotted] (03.center) -- (14.center);
\draw[arrows={- latex'},dotted] (02.center) -- (13.center);
\draw[arrows={- latex'},dotted] (01.center) -- (12.center);
\draw[arrows={- latex'},dotted] (00.center) -- (11.center);

\end{tikzpicture}};

\end{tikzpicture}
\end{center}
\caption{Solid and dotted arrows indicate strict and weak inequalities in the direction of the arrows respectively, i.e. $a\rightarrow b$ is equivalent to $a<b$, and $a\dashrightarrow b$ is equivalent to $a\leq b$.}
\label{fig:KRarrows}
\end{figure}

We call a tableau that belongs to $\mathcal{B}_{k,j}^{(i)}$ for some $(i,j,k)$ a KR-tableau. 
\end{defn}

\begin{thm}
The $q$-character of the KR-module $W_{k,j}^{(i)}$ is parametrized by the KR-tableaux in $\mathcal{B}_{k,j}^{(i)}$. That is, we have:
\begin{eqnarray*}
\chi_q(W_{k,j}^{(i)}) = \sum_{T\in \mathcal{B}_{k,j}^{(i)}} m_T\,\, ,
\end{eqnarray*}
where $m_T$ is the monomial associated to the tableau $T$ through the map \eqref{map:qmon}. 
\end{thm}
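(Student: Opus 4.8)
The plan is to identify the two descriptions of $\chi_q(W_{k,j}^{(i)})$ — the representation-theoretic one and the tableaux sum $\sum_{T\in\mathcal{B}_{k,j}^{(i)}} m_T$ — using a few structural facts about type $A$ KR-modules together with the Frenkel--Mukhin algorithm \cite{frenkel2001combinatorics}. First I would record that $W_{k,j}^{(i)}$ is \emph{special} (it has the unique dominant monomial $\mathbf{Y}_{k,j}^{(i)}$) and \emph{thin} (all $\ell$-weight spaces are one-dimensional) in type $A$, so that $\chi_q(W_{k,j}^{(i)})$ is a multiplicity-free sum of distinct monomials. This reduces the theorem to the set equality $\{\,m_T : T\in\mathcal{B}_{k,j}^{(i)}\,\}=\{\text{monomials of }\chi_q(W_{k,j}^{(i)})\}$, both sides multiplicity-free. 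By the Frenkel--Mukhin theorem, for a special module this monomial set is the unique one containing $m_+$ and closed under the $U_{q_{i'}}(\widehat{\mathfrak{sl}}_2)$-string conditions in every direction $i'\in I$; hence it suffices to show that $\sum_T m_T$ has the correct restriction and satisfies those string conditions.

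The counting input comes from the collapse map. By Remark \ref{rectangularT} the collapse of a staircase diagram of shape $(i,j,k)$ is the rectangle with $i$ rows and $k$ columns, and the two defining inequalities of Definition \ref{defn:KRtableau} (column-strictness top-to-bottom, and weak increase along diagonals) become exactly the column-strict, row-weak conditions for a semistandard Young tableau. Thus collapse is a bijection $\mathcal{B}_{k,j}^{(i)}\xrightarrow{\sim} S(\Lambda)$ with $\Lambda$ the $i\times k$ rectangle. Since $p(m_T)=m_{\mathrm{collapse}(T)}$ by compatibility of \eqref{map:qmon} with \eqref{map:mon}, this yields $\mathrm{res}\big(\sum_T m_T\big)=\chi(V_{k\omega_i})=\mathrm{res}\,\chi_q(W_{k,j}^{(i)})$, so the two sides already agree after the projection $Y_{i',j'}\mapsto y_{i'}$ and in particular match all finite weight multiplicities.

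The heart of the argument is the affine refinement. Using \eqref{eq:aAction}, I read each $T\in\mathcal{B}_{k,j}^{(i)}$ as a sequence of box increments $v\mapsto v+1$ applied to $T_{dom}$, where incrementing a box at index $p$ with value $v$ corresponds to multiplication by $A_{v,\,v-2p}^{-1}$, so that $m_T=m_{T_{dom}}\prod A_{v,v-2p}^{-1}$. I would then prove the key combinatorial lemma that column-strictness and diagonal-weakness are \emph{equivalent} to Frenkel--Mukhin admissibility: a monomial obtained from $m_+$ by such $A^{-1}$-products lies in $\chi_q(W_{k,j}^{(i)})$ if and only if the underlying box values form a KR-tableau. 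Concretely, for each fixed direction $i'$ one collects the exponents of $Y_{i',\cdot}$ read off along the filling, checks that they assemble into weights of a tensor product of $U_{q_{i'}}(\widehat{\mathfrak{sl}}_2)$-strings, and verifies that the raising/lowering string condition in direction $i'$ holds precisely when column-strictness (controlling increments within one column) and diagonal-weakness (controlling the interaction of adjacent columns $T_l,T_{l+1}$) both hold. The base case $k=1$ is the fundamental module, whose $q$-character is parametrized by single length-$i$ columns with strictly increasing entries, which matches Frenkel--Mukhin directly.

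The main obstacle is exactly this last equivalence: showing that the two \emph{local} tableau inequalities encode the \emph{global} Frenkel--Mukhin string conditions, most delicately at the seams between neighboring columns, where the weak diagonal inequality is what both forbids the box increments in $T_l$ that would produce an FM-inadmissible monomial and guarantees that every FM-admissible monomial is actually realized by some increment sequence. I expect this to require careful bookkeeping of which indices $p$ and values $v$ may be incremented given the current filling, organized either by induction on the total $A^{-1}$-degree $\sum v_{i',j'}(m,m_+)$ or by explicitly exhibiting, for each FM $\mathfrak{sl}_2$-string, the increments that generate it. Once this lemma is established, specialness of $W_{k,j}^{(i)}$ and the uniqueness clause of Frenkel--Mukhin close the proof.
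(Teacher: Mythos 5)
Your proposal is sound in outline but takes a genuinely different, and much heavier, route than the paper. The paper's entire proof is your middle step: collapsing a staircase tableau in $\mathcal{B}_{k,j}^{(i)}$ produces a rectangular semistandard Young tableau of shape $k\omega_i$ (column-strictness and diagonal-weakness in Definition \ref{defn:KRtableau} become exactly the SSYT conditions, cf.\ Remark \ref{rectangularT}), and since $\mathrm{res}(W_{k,j}^{(i)})=V_{k\omega_i}$ whose character is the SSYT sum, the paper declares the result to follow. This works as a two-line argument only because the statement is essentially a restatement, in slightly adjusted notation, of Nakajima's tableaux-sum description from \cite{nakajima2002t}, which the paper has just presented; the fixed staircase diagram pins down the unique affine lift of each collapsed tableau, so the theorem is really a consistency check against that cited description. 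You correctly sense that matching restrictions alone cannot logically determine the affine labels $j'$ in the $Y_{i',j'}$'s -- the projection $p$ of \eqref{map:qmon} onto \eqref{map:mon} is far from injective -- and so you supply the missing affine input independently: specialness and thinness of type $A$ KR-modules reduce the claim to a multiplicity-free set equality, and the Frenkel--Mukhin algorithm \cite{frenkel2001combinatorics} characterizes that set, leaving you the lemma that column-strictness plus diagonal-weakness is equivalent to FM-admissibility via $\widehat{\mathfrak{sl}}_2$-string conditions in each direction. That route, if completed, would be a self-contained re-derivation of Nakajima's description rather than an appeal to it, which is more than the paper attempts and is the honest price of not citing \cite{nakajima2002t}. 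The caveat is that you have only sketched this central lemma: you name the obstacle (the string conditions at the seams between adjacent columns) and propose an induction on the $A^{-1}$-degree, but you do not execute it, so as written your argument is a credible plan whose hardest step is deferred, whereas the paper's proof, though far shorter, discharges that step by citation. If you carry out the lemma (or simply cite \cite{nakajima2002t} for the affine refinement, as the paper implicitly does), your proof closes; you should also attach references for specialness and thinness in type $A$, which are true but not proved in this paper.
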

\begin{proof}
Notice that if we collapse a KR-tableau, we obtain a rectangular semi-standard Young tableau. Since ${\rm{res}}(W_{k,j}^{(i)}) = V_{k\omega_i}$, and $\chi(V_{k\omega_i})$ is given by rectangular semi-standard Young tableaux, the result follows. 
\end{proof}

\subsection{Fundamental cluster diagrams} \label{fundClusterDiagrams}
Recall the fundamental cluster $\mathcal{C}$ defined in \eqref{eq:FundClust}. The column diagrams in $\mathcal{C}$ are shown in Figure~\ref{fig:fundTab}(a). Given $\chi_{k,j(k,i)}^{(i)}\in \mathcal{C}$, the corresponding staircase diagram of shape $(i,j,k)$ is constructed as follows: start with the column diagram $\chi_{1, j(1,i)}^{(i)}$ (Figure~\ref{fig:fundTab}(a)), call it the central column and start adding columns of equal length alternatingly to both sides of the central tableau. When $i$ is odd, we start adding on the left and when $i$ is even, we start adding on the right (see Figure~\ref{fig:fundTab}(b) for an example).

\begin{figure}[h]
\begin{tikzpicture}
\node at (0,-3) {\begin{minipage}{0.4\textwidth}(a) Diagrams corresonding to the Fundamental modules ($k=1$) in $\mathcal{C}$\end{minipage}};
\node at (0,0) {$
\begin{ytableau}
\none[i=]&\none& \none[1]&\none&\none[2]&\none&\none[3]&\none&\none[4]&\none&\none[5]&\none&\none[6]&\none&\none[7]&\none&\none[\cdots]\\
\none&\none&		\none	&\none&\none&\none&\none&\none&\none&\none&\none&\none&\none&\none&\none\\
\none[\scriptscriptstyle-3]&\none&	\none	&\none&\none&\none&\none&\none&\none&\none&\none&\none&\none&\none&\\
\none[\scriptscriptstyle-2]&\none&	\none	&\none&\none&\none&\none&\none&\none&\none&		&\none&		&\none&\\
\none[\scriptscriptstyle-1]&\none&	\none	&\none&\none&\none&	    &\none&	 &\none&		&\none&		&\none&\\
\none[\scriptscriptstyle0]&\none&			&\none&		&\none&	    &\none&	 &\none&		&\none&		&\none&\\
\none[\scriptscriptstyle1]&\none&	\none	&\none&		&\none&	    &\none&	 &\none&		&\none&		&\none&\\
\none[\scriptscriptstyle2]&\none&	\none	&\none&\none&\none&\none&\none&	 &\none&		&\none&		&\none&\\
\none[\scriptscriptstyle3]&\none&	\none	&\none&\none&\none&\none&\none&\none&\none&\none&\none&		&\none&\\
\end{ytableau}
$};
\node at (9,-3) {\begin{minipage}{0.4\textwidth}(b) $i=3$ and $k=1,2,3,4$. The central and the last column that is added are colored. 
\end{minipage}};
\node at (9,0) {$
\begin{ytableau}
\none[k=1]&\none&\none&\none&\none[k=2]&\none&\none&\none&\none[k=3]&\none&\none&\none&\none[k=4]&\none\\
\none&\none&\none&\none&\none&\none&\none&\none&\none&\none&\none&\none&\none&\none\\
\none&\none&\none&\none&\none&\none&\none&\none&*(blue!50)&\none&\none&\none&\none&\\
*(yellow)&\none&\none&\none&*(yellow)&\none&\none&*(yellow)&*(blue!50)&\none&\none&\none&*(yellow)&\\
*(yellow)&\none&\none&*(blue!50)&*(yellow)&\none&&*(yellow)&*(blue!50)&\none&\none&&*(yellow)&\\
*(yellow)&\none&\none&*(blue!50)&*(yellow)&\none&&*(yellow)&\none&\none&*(blue!50)&&*(yellow)&\none\\
\none&\none&\none&*(blue!50)&\none&\none&&\none&\none&\none&*(blue!50)&&\none&\none\\
\none&\none&\none&\none&\none&\none&\none&\none&\none&\none&*(blue!50)&\none&\none&\none\\
\none&\none&\none&\none&\none&\none&\none&\none&\none&\none&\none&\none&\none&\none
\end{ytableau}
$};
\end{tikzpicture}
\caption{Diagrams corresponding to the fundamental cluster variables}
\label{fig:fundTab}
\end{figure}
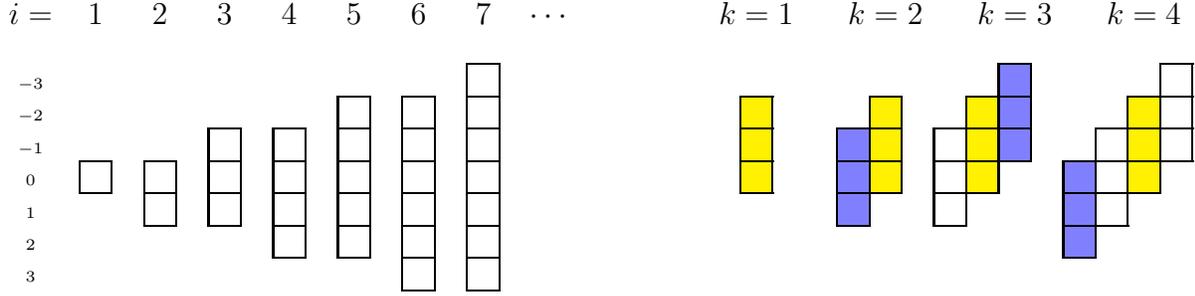

\subsection{Twisted multiplication of tableaux} \label{multTableaux}
We will now describe the twisted multiplication $*_\gamma$ from \eqref{gammaMult} on the tableaux. Let $C,T$ be KR-tableaux. In order to simplify our notation of Definition \ref{def:Nakajima}, we denote
\begin{eqnarray}  \label{simpleuv}
\begin{array}{ccc}
u_{i,j}(C):=u_{i,j}(m_C)\,; & v_{i,j}(C):=v_{i,j}(m_C, m_{C_{dom}})\,; &\gamma(C,T):=\gamma(m_C, m_T)  \,.
 \end{array}
\end{eqnarray}

\begin{remark} \label{rmk:gammaAdditive}
In order to compute $\gamma(C,T)$ for any general KR-tableaux $C$ and $T$, it suffices to compute the value of $\gamma$ between column tableaux only. Indeed, suppose $T=(T_l)$. Then,
\begin{eqnarray*}
m_T &=& \prod_{l} \left(\prod_{i,j} Y_{i,j}^{u_{i,j}(T_l)}\right) = \prod_{i,j} Y_{i,j}^{\sum_{l} u_{i,j}(T_l)} \,\, ,\\
m_T &=& \prod_{l} \left( m_{{T_l}_{dom}} \prod_{i,j} A_{i,j}^{-v_{i,j}(T_l)}  \right) = m_{T_{dom}} \prod_{i,j} A_{i,j}^{-\sum_{l} v_{i,j}(T_l)}\,\, .
\end{eqnarray*}
That is, both $u_{i,j}$ and $v_{i,j}$ are additive. Therefore, for any general tableau $C=(C_k)$, since $\gamma(C,T)$ is a linear expression in $u_{i,j}$ and $v_{i,j}$, we have
\begin{eqnarray*}
\gamma(C,T) = \sum_{k,l} \gamma(C_k,T_l) \,.
\end{eqnarray*}
\end{remark}

\begin{defn}
Let $(C,T)$ be a pair of column KR-tableaux. A \textsl{block} in $(C,T)$ at index $p$ is a pair of boxes given by $(C[p], T[p])$. We say there is an $L^\pm$-block, an $N^\pm$-block, or a $U$-block at $p$ in $(C,T)$ if the following inequality conditions hold:
\begin{center}
\begin{tikzpicture}
\node at (0,0) {
\begin{tikzpicture}
\node at (0.5*\X,2.5*\X+.5) {$C$};
\node at (1.5*\X,2.5*\X+.5) {$T$};
%\node at (-0.5,2.5) {index};
\node at (-.5*\X,1.5*\X) {$\scriptscriptstyle p-1$};
\node at (-.5*\X,.5*\X) {$\scriptscriptstyle p$};

\draw (0*\X,0*\X) rectangle (1*\X,1*\X) node[fitting node] (00) {};
\draw[dotted] (0*\X,1*\X) rectangle (1*\X,2*\X) node[fitting node] (01) {};
\draw (1*\X,0*\X) rectangle (2*\X,1*\X) node[fitting node] (10) {};
\draw[dotted] (1*\X,1*\X) rectangle (2*\X,2*\X) node[fitting node] (11) {};

\draw[arrows={- latex'},thick] (01.center) -- (10.center);
\draw[arrows={- latex'},thick] (10.center) -- (00.center);

\node at (1*\X,-1*\X) { $L^+$-block};
\end{tikzpicture}
};

\node at (3,0) {
\begin{tikzpicture}
\node at (0.5*\X,2.5*\X+.5) {$C$};
\node at (1.5*\X,2.5*\X+.5) {$T$};
%\node at (-0.5,2.5) {index};
\node at (-.5*\X,1.5*\X) {$\scriptscriptstyle p-1$};
\node at (-.5*\X,.5*\X) {$\scriptscriptstyle p$};

\draw (0*\X,0*\X) rectangle (1*\X,1*\X) node[fitting node] (00) {};
\draw[dotted] (0*\X,1*\X) rectangle (1*\X,2*\X) node[fitting node] (01) {};
\draw (1*\X,0*\X) rectangle (2*\X,1*\X) node[fitting node] (10) {};
\draw[dotted] (1*\X,1*\X) rectangle (2*\X,2*\X) node[fitting node] (11) {};

\draw[arrows={latex'-},thick] (00.center) -- (11.center);
\draw[arrows={latex' -},thick] (10.center) -- (00.center);

\node at (1*\X,-1*\X) { $L^-$-block};
\end{tikzpicture}
};

\node at (6,0) {
\begin{tikzpicture}
\node at (0.5*\X,2.5*\X+.5) {$C$};
\node at (1.5*\X,2.5*\X+.5) {$T$};
%\node at (-0.5,2.5) {index};
\node at (-.5*\X,1.5*\X) {$\scriptscriptstyle p-1$};
\node at (-.5*\X,.5*\X) {$\scriptscriptstyle p$};

\draw (0*\X,0*\X) rectangle (1*\X,1*\X) node[fitting node] (00) {};
\draw[dotted] (0*\X,1*\X) rectangle (1*\X,2*\X) node[fitting node] (01) {};
\draw (1*\X,0*\X) rectangle (2*\X,1*\X) node[fitting node] (10) {};
\draw[dotted] (1*\X,1*\X) rectangle (2*\X,2*\X) node[fitting node] (11) {};

\draw[arrows={- latex'},thick,dotted] (00.center) -- (11.center);

\node at (1*\X,-1*\X) { $N^+$-block};
\end{tikzpicture}
};

\node at (9,0) {
\begin{tikzpicture}
\node at (0.5*\X,2.5*\X+.5) {$C$};
\node at (1.5*\X,2.5*\X+.5) {$T$};
%\node at (-0.5,2.5) {index};
\node at (-.5*\X,1.5*\X) {$\scriptscriptstyle p-1$};
\node at (-.5*\X,.5*\X) {$\scriptscriptstyle p$};

\draw (0*\X,0*\X) rectangle (1*\X,1*\X) node[fitting node] (00) {};
\draw[dotted] (0*\X,1*\X) rectangle (1*\X,2*\X) node[fitting node] (01) {};
\draw (1*\X,0*\X) rectangle (2*\X,1*\X) node[fitting node] (10) {};
\draw[dotted] (1*\X,1*\X) rectangle (2*\X,2*\X) node[fitting node] (11) {};

\draw[arrows={- latex'},thick,dotted] (10.center) -- (01.center);

\node at (1*\X,-1*\X) { $N^-$-block};
\end{tikzpicture}
};
\node at (12,0) {
\begin{tikzpicture}
\node at (0.5*\X,2.5*\X+.5) {$C$};
\node at (1.5*\X,2.5*\X+.5) {$T$};
%\node at (-0.5,2.5) {index};
\node at (-.5*\X,1.5*\X) {$\scriptscriptstyle p-1$};
\node at (-.5*\X,.5*\X) {$\scriptscriptstyle p$};

\draw (0*\X,0*\X) rectangle (1*\X,1*\X) node[fitting node] (00) {};
\draw[dotted] (0*\X,1*\X) rectangle (1*\X,2*\X) node[fitting node] (01) {};
\draw (1*\X,0*\X) rectangle (2*\X,1*\X) node[fitting node] (10) {};
\draw[dotted] (1*\X,1*\X) rectangle (2*\X,2*\X) node[fitting node] (11) {};

\draw[arrows={latex' - latex'},thick]  (00.center) -- (10.center);

\node at (1*\X,-1*\X) { $U$-block};
\end{tikzpicture}
};
\end{tikzpicture}
\end{center}
The arrow notation is the same as in Figure~\ref{fig:KRarrows} and two-sided arrow is equivalent to equality. 
\end{defn}
It is important to stress that a block consists of $2$ boxes at index $p$, even though there may be an inequality requirement coming from boxes with index $p-1$, such as the case of $L$ and $N$ blocks.  
\begin{defn}
If there is either an $L$-block or a $U$-block at $p$ in $(C,T)$, for convenience we say there is an $LU$-block at $p$. Any other combinations are allowed (e.g. $L^+UN^-$-block).  
\end{defn}

\begin{remark} \label{rmk:obs1}
As we are dealing with KR-tableaux, inequality conditions in Figure~\ref{fig:KRarrows} are always present. Due to this restriction, any block in a pair of KR-tableaux $(C,T)$ is precisely one of the 5 types: $L^\pm, N^\pm$ or $U$. Moreover, the strictly increasing columns condition puts additional restrictions on the order with which the blocks can appear. For example, (a) an $L^\pm$-block is never followed by an $N^\pm$-block; (b) an $N^\pm$-block is never followed by an $N^\mp$-block, and (c) a $U$-block is never followed by an $N$-block. The reason can be seen by simply composing the arrows. For example,
\begin{center}
\begin{tikzpicture}

\node (1) at (0,0) {
\begin{tikzpicture}
\node at (-1,3) {(a)};

\node at (0.5*\X, \X*5) {$\scriptstyle{C}$};

\draw (0, \X*1) rectangle (\X*1, \X*2) node[fitting node] (01) {};
\draw (0, \X*2) rectangle (\X*1, \X*3) node[fitting node] (02) {};
\draw[dotted] (0, \X*3) rectangle (\X*1, \X* 4) node[fitting node] (03) {};

\node at (\X*1.5, \X*5) {$\scriptstyle{T}$};
\draw (\X*1,\X*1) rectangle (\X*2,\X*2) node[fitting node] (11) {};
\draw (\X*1,\X*2) rectangle (\X*2,\X*3) node[fitting node] (12) {};
\node at ($(12)+(0.5,0)$) {$\scriptstyle{L^-}$};
\draw[dotted] (\X*1,\X*3) rectangle (\X*2,\X*4) node[fitting node] (13) {};
\node at ($(11)+(0.5,0)$) {$\scriptstyle{N^-}$};

\draw[arrows={- latex'},red] (13.center) -- (02.center);
\draw[arrows={- latex'},red] (02.center) -- (12.center);
\draw[arrows={- latex'},blue,dotted] (11.center) -- (02.center);

\node at (\X*1-1 + 1.5*\D, 2) {$\scriptstyle \text{composing arrows}$};
\draw[arrows={- latex'},thick] (\X*1-.8 + \D,1.5) -- (\X*1+2*\D-1,1.5);

\node at (0.5*\X+3*\D-2, \X*5) {$\scriptstyle{C}$};

\draw (0+3*\D-2, \X*1) rectangle (\X*1+3*\D-2, \X*2) node[fitting node] (c01) {};
\draw (0+3*\D-2, \X*2) rectangle (\X*1+3*\D-2, \X*3) node[fitting node] (c02) {};
\draw[dotted] (0+3*\D-2, \X*3) rectangle (\X*1+3*\D-2, \X* 4) node[fitting node] (c03) {};

\node at (\X*1.5+3*\D-2, \X*5) {$\scriptstyle{T}$};
\draw (\X*1+3*\D-2,\X*1) rectangle (\X*2+3*\D-2,\X*2) node[fitting node] (t11) {};
\draw (\X*1+3*\D-2,\X*2) rectangle (\X*2+3*\D-2,\X*3) node[fitting node] (t12) {};
\draw[dotted] (\X*1+3*\D-2,\X*3) rectangle (\X*2+3*\D-2,\X*4) node[fitting node] (t13) {};
\node at ($(t11)+(1,0)$) {$\scriptstyle{\text{Contrad.}}$}; 

\draw[arrows={- latex'}] (t11.center) -- (t12.center);

\end{tikzpicture}
};

\node (2) at (8,0) {

\begin{tikzpicture}
\node at (-1,3) {(b)};

\node at (0.5*\X, \X*5) {$\scriptstyle{C}$};

\draw (0, \X*1) rectangle (\X*1, \X*2) node[fitting node] (01) {};
\draw (0, \X*2) rectangle (\X*1, \X*3) node[fitting node] (02) {};
\draw[dotted] (0, \X*3) rectangle (\X*1, \X* 4) node[fitting node] (03) {};

\node at (\X*1.5, \X*5) {$\scriptstyle{T}$};
\draw (\X*1,\X*1) rectangle (\X*2,\X*2) node[fitting node] (11) {};
\draw (\X*1,\X*2) rectangle (\X*2,\X*3) node[fitting node] (12) {};
\node at ($(12)+(0.5,0)$) {$\scriptstyle{N^+}$};
\draw[dotted] (\X*1,\X*3) rectangle (\X*2,\X*4) node[fitting node] (13) {};
\node at ($(11)+(0.5,0)$) {$\scriptstyle{N^-}$};

\draw[arrows={- latex'},dotted,red] (02.center) -- (13.center);
\draw[arrows={- latex'},dotted,blue] (11.center) -- (02.center);

\node at (\X*1-1 + 1.5*\D, 2) {$\scriptstyle \text{composing arrows}$};
\draw[arrows={- latex'},thick] (\X*1-.8 + \D,1.5) -- (\X*1+2*\D-1,1.5);

\node at (0.5*\X+3*\D-2, \X*5) {$\scriptstyle{C}$};

\draw (0+3*\D-2, \X*1) rectangle (\X*1+3*\D-2, \X*2) node[fitting node] (c01) {};
\draw (0+3*\D-2, \X*2) rectangle (\X*1+3*\D-2, \X*3) node[fitting node] (c02) {};
\draw[dotted] (0+3*\D-2, \X*3) rectangle (\X*1+3*\D-2, \X* 4) node[fitting node] (c03) {};

\node at (\X*1.5+3*\D-2, \X*5) {$\scriptstyle{T}$};
\draw (\X*1+3*\D-2,\X*1) rectangle (\X*2+3*\D-2,\X*2) node[fitting node] (t11) {};
\draw (\X*1+3*\D-2,\X*2) rectangle (\X*2+3*\D-2,\X*3) node[fitting node] (t12) {};
\draw[dotted] (\X*1+3*\D-2,\X*3) rectangle (\X*2+3*\D-2,\X*4) node[fitting node] (t13) {};
\node at ($(t11)+(1,0)$) {$\scriptstyle{\text{Contrad.}}$}; 

\draw[arrows={- latex'},dotted] (t11.center) -- (t13.center);

\end{tikzpicture}

};
\end{tikzpicture}
\end{center}
\end{remark}

\begin{defn}
Let $(C,T)$ be a pair of column KR-tableaux. We define for each $p\in \mathbb{Z}$ functions $L_p$ and $N_p$ as follows:
\begin{eqnarray*}
L_p(C,T) = \left\{ \begin{array}{cc} 
1 & \text{ if } C[p-1] < T[p] < C[p]\\
-1 & \text{ if } T[p-1] < C[p] < T[p]\\
0 & \text{ otherwise} 
\end{array}\right.
&\text{;}& N_p(C,T)=
\left\{\begin{array}{cc}
1 & \text{ if }C[p] \leq T[p-1]\\
-1 & \text{ if } T[p] \leq C[p-1]\\
0 &\text{ otherwise }
\end{array}\right.
\end{eqnarray*}
In other words, $L_p(C,T)$ is $\pm1$ if there is an $L^\pm$-block at $p$ and $0$ otherwise, and similarly for $N_p(C,T)$. When there is no confusion as to which KR-tableaux we are referring to, we will suppress the dependance on $C$ and $T$ and simply write $L_p$ and $N_p$. 
\end{defn}

\begin{defn}
Let $(C,T)$ be a pair of column KR-tableaux with $\supp(C)\cap \supp(T) = \left\{ h, \ldots, t\right\}$. The block-tableau of $(C,T)$, denoted $B_{CT}$, is a single column diagram with $\supp(B_{CT}) = \left\{ h, \ldots, t, t+1\right\}$ decorated with letters $\left\{L^+,L^-,N^+,N^-,U\right\}$, such that the value of $B_{CT}$ at index $p$ is given by the corresponding block in $(C,T)$ at $p$. 
\end{defn}

\begin{example}
Block-tableau is a convenient way to describe all the blocks of $(C,T)$ at once. Saying there is an $L^+$-block at $p$ in $(C,T)$ is equivalent to $B_{CT}[p]=L^+$.
\begin{center}
\begin{tikzpicture}
\node at (0,0) {$
\ytableausetup
{mathmode, boxsize=1em}
\begin{ytableau}
\none[\scriptstyle index]&\none&\none[C] 	&\none	&\none[T]\\
\none&\none&\none	&\none	&\none\\
\none&\none&1		&\none	&\none[\scriptstyle 0]\\
\none[\scriptstyle 0]&\none&3		&\none	&2\\
\none[\scriptstyle 1]&\none&4		&\none	&3\\
\none[\scriptstyle 2]&\none&7		&\none	&9\\
\none[\scriptstyle 3]&\none&8		&\none &\none[\scriptstyle \infty]	\\
\end{ytableau}$};

\draw[->] (2,0)--(4.5,0);
%\node at (3.2,0)[above] {block-tableau};

\node at (6,0) {$
\begin{ytableau}
\none[\scriptstyle index]&\none&\none[B_{CT}] 		\\
\none&\none&\none	\\
\none&\none&\none		\\
\none[\scriptstyle 0]&\none&\scriptscriptstyle L^+		\\
\none[\scriptstyle 1]&\none&\scriptscriptstyle N^-		\\
\none[\scriptstyle 2]&\none&\scriptscriptstyle L^-		\\
\none[\scriptstyle 3]&\none&\scriptscriptstyle N^+			
\end{ytableau}$};
\end{tikzpicture}
\end{center}

\end{example}

\begin{defn} \label{def:pairs}
Let $(C,T)$ be a pair of column tableaux. We say $(C,T)$ is a \emph{fundamental} pair if $\head(C) \geq \head(T)$ and $\tail(C)\leq \tail(T)$. We say $(C,T)$ is \emph{anti-fundamental} if $(T,C)$ is fundamental. We say $(C,T)$ is a \emph{regular} pair if $\head(C) > \head(T)$ and $\tail(C) > \tail(T)$. We say $(C,T)$ is \emph{anti-regular} if $(T,C)$ is regular (see Figure~\ref{types}). 
\end{defn}

The reason for the name fundamental pair is because of the diagrams in $\mathcal{C}$ corresponding to the fundamental modules, i.e KR-modules with $k=1$. Every pair of fundamental modules in $\mathcal{C}$ forms a fundamental or an anti-fundamental pair as in Definition \ref{def:pairs} (see Figure~\ref{fig:fundTab}(a)). 

\begin{figure}[h] 
\begin{center}
\begin{tikzpicture}
\node at (0,0.5) {Fundamental};
\node at (0,-2) {$
\begin{ytableau}
\none[\scriptstyle index]			&\none		&\none[C]		&\none&\none[T]	\\
\none					&\none		&\none[\scriptstyle 0]			&\none&\none[\scriptstyle 0]		\\
\none[\scriptstyle h-1]		&\none		&\none[\scriptstyle 0]	&\none&			\\
\none[\scriptstyle h]			&\none		&*(yellow)			&\none&*(yellow)			\\
\none					&\none		&			&\none&			\\
\none[\scriptstyle t]			&\none		&			&\none&			\\
\none[\scriptstyle t+1]		&\none		&\none[\scriptstyle \infty]		&\none&*(yellow)			\\
\none[\scriptstyle t+2]		&\none		&\none[\scriptstyle \infty]		&\none&	\none[\scriptstyle \infty]		\\
\end{ytableau}
$};

\node at (3.5,-0.3) {$B_{CT}$};
\node at (3.5,-2) {$
\begin{ytableau}
\none\\
\none\\
		*(yellow)						\\
					\\
					\\
		*(yellow)			\\
\end{ytableau}
$};

%\node at (3.5,-1.5) {head-block};
%\draw[-{latex'}] (2.3,-1.5) -- (1.3,-1.5);
%\draw[-{latex'}] (4.7,-1.5) -- (5.7,-1.5);
%\draw[-{latex'}] (2.3, -3.3) -- (1.3, -3.3);
%\draw[-{latex'}] (4.7,-3.3) -- (5.7,-3.3);
%\node at (3.5,-3.3) {tail-block};

\node at (7,0.5) {Regular};

\node at (7,-2) {$
\begin{ytableau}
\none[C]			&\none&\none[T]			&\none&\none[\scriptstyle index]							\\
\none[\scriptstyle 0]	&\none&\none[\scriptstyle 0]	&		\none&\none			\\
\none[\scriptstyle 0]	&\none&					&		\none&\none[\scriptstyle h-1]		\\
*(yellow)			&\none&*(yellow)			&			\none&\none[\scriptstyle h]		\\
				&\none&					&	\none&\none						\\
				&\none&					&			\none&\none[\scriptstyle t]		\\
*(yellow)			&\none&\none[\scriptstyle \infty]	&		\none&\none[\scriptstyle t+1]			\\
\none[\scriptstyle \infty]			&\none&\none[\scriptstyle \infty]	&	\none&\none[\scriptstyle t+2]		\\
\end{ytableau}
$};

\end{tikzpicture}
\end{center}
\caption{Types of pairs of column diagrams}
\label{types}
\end{figure}
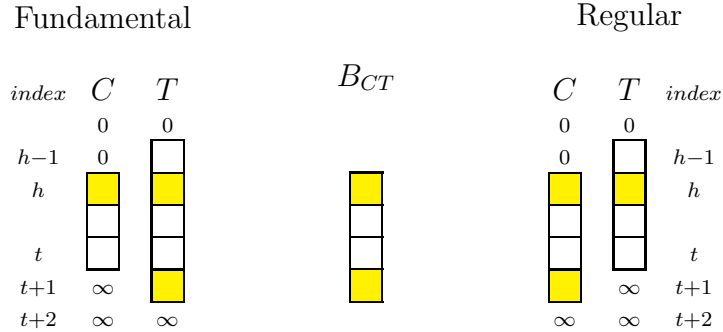

%\begin{defn}
%Given a pair of column KR-tableaux $(C,T)$ with its corresponding block-tableau $B_{CT}$, we call the first and the last blocks in $B_{CT}$ the \textsl{head-block} and the \textsl{tail-block} respectively (see Figure~\ref{types}). 
%\end{defn}

\begin{lemma} \label{rmk:obs2}

Let $(C,T)$ be a pair of column KR-tableaux and suppose $(C,T)$ is either fundamental or regular. Denote $h=\head(C,T)$ and $t=\tail(C,T)$. Then,

(a) $B_{CT}[h]$ is never $N^-$ since $C[h-1] = 0 \ngeq T[p]$.

(b) If $(C,T)$ is fundamental, $B_{CT}[t+1]$ is never $L^+$ or $N^+$ since $T[t]<T[t+1] \leq \infty = C[t]$.

(c) If $(C,T)$ is regular, $B_{CT}[t+1]$ is never $L^-$ or $N^-$ since $C[t]<C[t+1] < \infty = T[t]$.
\end{lemma}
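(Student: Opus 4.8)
The plan is to reduce all three parts to the two boundary conventions for a column tableau—namely $C[p]=T[p]=0$ for indices above the respective heads and $C[p]=T[p]=\infty$ for indices below the respective tails—together with the elementary fact that every genuine entry of a column tableau lies in $\{1,\ldots,r+1\}$, so that $1\le C[p],T[p]\le r+1$ whenever $p$ lies in the corresponding support. Each block type is defined by a short chain of strict or weak inequalities among the four entries $C[p-1],C[p],T[p-1],T[p]$; the whole point is that once one of these entries is pinned to an extremal value ($0$ at the top, $\infty$ at the bottom), the defining chain of the offending block type becomes numerically impossible. So before touching the blocks I would first record the support geometry forced by the hypothesis, and then read off each exclusion as a one-line comparison.

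For the top index, note that in \emph{both} the fundamental and the regular case the definition gives $\head(C)\ge\head(T)$, so the common head is $h=\head(C)$ and hence $C[h-1]=0$. This yields (a) immediately: an $N^{-}$-block at $h$ would require $T[h]\le C[h-1]=0$, which is impossible because $T[h]\ge 1$. I would also remark that this is sharp, in that $N^{+}$, $L^{\pm}$ and $U$ all remain genuinely possible at $h$ depending on whether $\head(T)=h$ or $\head(T)<h$; only the $N^{-}$ pattern is forbidden there.

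For the bottom index $t+1$ I would split according to which column is nested inside the other. If $(C,T)$ is \emph{fundamental} then $\supp(C)\subseteq\supp(T)$, so $t=\tail(C)$, giving $C[t+1]=\infty$ while $T[t]$ is finite; substituting into the block definitions, the $N^{+}$ pattern needs $C[t+1]\le T[t]$, i.e.\ $\infty\le T[t]$, while the $L$-pattern with chain $T[t]<C[t+1]<T[t+1]$ needs $\infty=C[t+1]<T[t+1]$—both impossible. If $(C,T)$ is \emph{regular} the nesting is reversed at the bottom, $t=\tail(T)$, so that $T[t+1]=\infty$ and $C[t+1]$ is finite; the symmetric substitution rules out the $N$-pattern requiring $T[t+1]\le C[t]$ and the $L$-pattern whose chain runs $C[t]<T[t+1]<C[t+1]$, i.e.\ $\infty=T[t+1]<C[t+1]$. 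In each case the two excluded block types are exactly those whose defining chain would force the entry pinned to $\infty$ to be strictly exceeded by a finite entry.

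There is no deep obstacle here: the argument is essentially a table lookup, and the genuine content is entirely in the boundary conventions. The only thing that requires care is the bookkeeping in the previous paragraph—correctly reading off from the fundamental versus regular hypothesis which of $C,T$ supplies the $\infty$ at index $t+1$, and dually that it is $C$ which supplies the $0$ at index $h-1$. Once those four boundary entries are correctly identified, each of the three exclusions collapses to the displayed inequality check, and the lemma follows.
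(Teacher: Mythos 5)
Your reduction is exactly the paper's proof: the paper's argument consists of three pictures encoding the same boundary conventions ($C[h-1]=0$ because $\head(C)\ge\head(T)$ in both cases, $C[t+1]=\infty$ in the fundamental case, $T[t+1]=\infty$ in the regular case) followed by the same one-line inequality checks, and your support bookkeeping is right in all three parts --- including your tacit correction of the statement's ``$\infty=C[t]$'' and ``$\infty=T[t]$'' to $C[t+1]$ and $T[t+1]$. Part (a) and both $N^{\pm}$ exclusions are verified exactly as in the paper.

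One point of care with the $L$-exclusions. By the paper's definition of $L_p$, an $L^+$-block at $p$ is the chain $C[p-1]<T[p]<C[p]$, and an $L^-$-block is $T[p-1]<C[p]<T[p]$. The chain you kill in (b), $T[t]<C[t+1]<T[t+1]$, is therefore the $L^-$ pattern at $t+1$, and the chain you kill in (c), $C[t]<T[t+1]<C[t+1]$, is the $L^+$ pattern --- superscripts opposite to those in the statement. What is actually true, and what your inequalities prove, is: no $L^-$ or $N^+$ at $t+1$ for fundamental pairs, and no $L^+$ or $N^-$ at $t+1$ for regular pairs. The statement's labels cannot be taken literally: for the fundamental pair with $C$ a single box with entry $1$ at index $0$ and $T$ with entries $1,2,3$ at indices $-1,0,1$, one has $C[0]=1<T[1]=3<C[1]=\infty$, a genuine $L^+$-block at $t+1$; and Theorem \ref{gamma} needs $L_{t+1}=-1$, i.e.\ an $L^-$-block at $t+1$, to be possible for regular pairs, since that is precisely its boundary term there. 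The paper's own figures establish your chains, not the stated labels (the figure for (b) draws $T[t]<T[t+1]\le C[t+1]=\infty$, which excludes $L^-$, not $L^+$), so the swap is a typo in the lemma's statement rather than a flaw in your logic; still, you matched your chains to block names by trusting the statement's superscripts instead of the definition of $L_p$, and you should state the corrected superscripts explicitly rather than inherit the mismatch.
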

\begin{proof}
The statements are clear from the following pictures:
\begin{center}
\begin{tikzpicture}
%\node at (1*\X+\D, \X*4.5) {$\scriptstyle{AX}$};

\node at (0,0) {\begin{tikzpicture}

%\node at (0+\D+0.5*\X-3.5,\X*5+1) {\begin{minipage}{.3\textwidth}(a) $B_{CT}[h]$ is never $N^-$ since $C[h-1] = 0 \ngeq T[p]$\end{minipage}};
\node at (0+\D+0.5*\X-5, \X*5-0.5) {(a)};
\node at (0+\D+0.5*\X-4, \X*5-0.5) {$\scriptstyle{C}$};
\node at (0+\D+\X*1.5-4, \X*5-0.5) {$\scriptstyle{T}$};

\draw[white] (0+\D-4, \X*2) rectangle (\X*1+\D-4, \X*3) node[fitting node] (c02) {};
\draw (0+\D-4, \X*1) rectangle (\X*1+\D-4, \X*2) node[fitting node] (c01) {};
\node at ($(c01.center)-(\X,0)$) {$\scriptscriptstyle h$};

\draw[dotted] (\X*1+\D-4,\X*2) rectangle (\X*2+\D-4,\X*3) node[fitting node] (t12) {};
\draw (\X*1+\D-4,\X*1) rectangle (\X*2+\D-4,\X*2) node[fitting node] (t11) {};

\draw[arrows={- latex'},red] (c02.center) -- (t11.center);
\node[fill=white] at (c02.center) {$\scriptstyle{0}$};
\node at ($(t11)+(1,0)$) {$\scriptstyle{\text{not }N^-}$};
\end{tikzpicture}
};

\node at (5.5,0) {\begin{tikzpicture}
%\node at (0+\D+0.5*\X,\X*5+1) {\begin{minipage}{.3\textwidth}(b) If $(C,T)$ is fundamental, $B_{CT}[t+1]$ is never $L^+$ or $N^+$ since $T[t]<T[t+1] \leq \infty = C[t]$.\end{minipage}};
\node at (0+\D+0.5*\X-1, \X*5-0.5) {(b)};
\node at (0+\D+0.5*\X, \X*5-0.5) {$\scriptstyle{C}$};
\node at (0+\D+\X*1.5, \X*5-0.5) {$\scriptstyle{T}$};

\draw[white] (0+\D, \X*1) rectangle (\X*1+\D, \X*2) node[fitting node] (c01) {};
\draw (0+\D, \X*2) rectangle (\X*1+\D, \X*3) node[fitting node] (c02) {};
\node at ($(c02.center)-(\X,0)$) {$\scriptscriptstyle t$};

\draw (\X*1+\D,\X*2) rectangle (\X*2+\D,\X*3) node[fitting node] (t12) {};
\draw[dotted] (\X*1+\D,\X*1) rectangle (\X*2+\D,\X*2) node[fitting node] (t11) {};

\draw[arrows={- latex'},red,dotted] (t11.center) -- ($(t11.west)-(.1,0)$);
\draw[arrows={-latex'},blue]  (t12.center) -- (t11.center);
\node at (c01.center) {$\scriptstyle{\infty}$};
\node at ($(t11)+(1.3,0)$) {$\scriptstyle{\text{not }L^+N^+}$};
\end{tikzpicture}
};

\node at (11,0) {\begin{tikzpicture}
%\node at (0+\D+0.5*\X+4.5,\X*5+1) {\begin{minipage}{.3\textwidth}(c) If $(C,T)$ is regular, $B_{CT}[t+1]$ is never $L^-$ or $N^-$ since $C[t]<C[t+1] < \infty = T[t]$.\end{minipage}};
\node at (0+\D+0.5*\X+3, \X*5-0.5) {(c)};
\node at (0+\D+0.5*\X+4, \X*5-0.5) {$\scriptstyle{C}$};
\node at (0+\D+\X*1.5+4, \X*5-0.5) {$\scriptstyle{T}$};

\draw[white] (\X*1+\D+4,\X*1) rectangle (\X*2+\D+4,\X*2) node[fitting node] (11) {};
\draw (\X*1+\D+4,\X*2) rectangle (\X*2+\D+4,\X*3) node[fitting node] (12) {};

\draw (0+\D+4, \X*1) rectangle (\X*1+\D+4, \X*2) node[fitting node] (01) {};
\draw (0+\D+4, \X*2) rectangle (\X*1+\D+4, \X*3) node[fitting node] (02) {};
\node at ($(02.center)-(\X,0)$) {$\scriptscriptstyle t$};

\draw[arrows={- latex'},red] (01.center) -- ($(11.west)+(0.1,0)$);
draw[arrows={-latex'},blue] (12.center) -- (11.center);
\node  at (11.center) {$\scriptstyle{\infty}$};
\node at ($(11)+(1.5,0)$) {$\scriptstyle{\text{not }L^-N^-}$};
\end{tikzpicture}
};

\end{tikzpicture}

\end{center}
\end{proof}

\begin{remark}
It suffices to compute $\gamma(C,T)$ for $(C,T)$ either fundamental or regular. If $(C,T)$ is of neither type, then $(T,C)$ is either fundamental or regular and $\gamma(C,T)=-\gamma(T,C)$. 
\end{remark}

The value of $\gamma(C,T)$ depends on the occurrence of $L$ and $N$ blocks in $(C,T)$ encoded in the functions $L_p$ and $N_p$. More precisely,

\begin{thm}\label{gamma}
Let $(C,T)$ be a pair of column KR-tableaux and let $\supp(C)\cap \supp(T) := \left\{ h, \ldots, t\right\}$ (see Figure~\ref{types}). Then
\begin{eqnarray*}
\gamma(C,T) = \sum_{p=h}^{t} L_p (C,T) + \left\{ \begin{array}{cc} 
N_{t+1}(C,T) &\text{ if } (C,T) \text{ is (anti-)fundamental}\\
L_{t+1}(C,T) &\text{ otherwise}
\end{array}\right.  \,\, .
\end{eqnarray*}
\end{thm}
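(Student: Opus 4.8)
The plan is to turn the defining formula $\gamma(C,T)=d(C,T)-d(T,C)$ of Definition \ref{def:Nakajima} into an explicit computation with $u_{i,j}$ and $v_{i,j}$ and then extract the block structure by a single reindexing. By the remark immediately preceding the statement I may assume $(C,T)$ is fundamental or regular, the remaining cases following from $\gamma(C,T)=-\gamma(T,C)$ (and the fact that both sides of the asserted identity are antisymmetric under $C\leftrightarrow T$, since swapping interchanges $L^\pm$ and $N^\pm$). First I would read off from \eqref{map:qmon} and \eqref{eq:aAction} the closed forms for a single column tableau: with $p=(a-b-1)/2$ one gets $u_{a,b}(C)=\delta_{C[p],a}-\delta_{C[p+1],a+1}$, and with $p=(a-b)/2$ one gets $v_{a,b}(C)=[\,C_{dom}[p]\le a<C[p]\,]$, where $[\,\cdot\,]$ denotes the indicator and I use the conventions $C[p]=0,\infty$ off the support. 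The only structural fact about the dominant filling I need is that $C_{dom}[p]=p-\head(C)+1$ is the pointwise-minimal strictly increasing filling, so $C[p]\ge C_{dom}[p]$ for all $p$.

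Expanding $\gamma(C,T)$ into four sums, I would dispose of the two ``dominant'' sums $\sum u_{i,j+1}(C_{dom})v_{i,j}(T)$ and $\sum u_{i,j+1}(T_{dom})v_{i,j}(C)$ first. Because $C_{dom}$ increases by exactly one from box to box, the difference $\delta_{C_{dom}[q-1],a}-\delta_{C_{dom}[q],a+1}$ cancels at every interior index and at the head, surviving only at $q=\tail(C)+1$ with $a=\length(C)$. Hence each dominant sum collapses to a single boundary value, namely $[\,T_{dom}[\tail(C)+1]\le\length(C)<T[\tail(C)+1]\,]$ and its $C\leftrightarrow T$ analogue.

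The heart of the argument is the two ``main'' sums $S_1=\sum v_{i,j+1}(C)u_{i,j}(T)$ and $S_2=\sum v_{i,j+1}(T)u_{i,j}(C)$. Writing $S_1=\sum_p(f(p)-g(p))$ with $f(p)=[\,C_{dom}[p]\le T[p]<C[p]\,]$ and $g(p)=[\,C_{dom}[p]<T[p+1]\le C[p]\,]$, I would reindex the $g$-sum by $p\mapsto p-1$ (legitimate, as all supports are finite) and use $C_{dom}[p]=C_{dom}[p-1]+1$ together with $C[p-1]<C[p]$ to obtain the key identity
\[
f(p)-g(p-1)=[\,C[p-1]<T[p]<C[p]\,]=[\,L^+\text{-block at }p\,]
\]
at every interior index; here the inequality $C[p]\ge C_{dom}[p]$ is exactly what makes the lower constraint $C_{dom}[p]\le T[p]$ automatic once $C[p-1]<T[p]$. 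The identical manipulation on $S_2$ produces $[\,L^-\text{-block at }p\,]$. Each main sum leaves one uncancelled boundary term, at $p=\tail(C)+1$ respectively $p=\tail(T)+1$, while the head end is clean.

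Finally I would assemble the pieces. The interior identities give $S_1-S_2=\sum_{p=h}^{t}L_p$ plus the four boundary terms, after checking that no $L^\pm$-block can occur outside $\{h,\dots,t\}$ (an $L$-block needs both entries finite, so its index lies in $\supp(C)\cap\supp(T)$). It then remains to evaluate the boundary terms in each case. In the fundamental case $\head(C)\ge\head(T)$, $\tail(C)\le\tail(T)$ forces three of them to vanish — using $T_{dom}[\tail(C)+1]>\length(C)$ and hence $T[\tail(C)+1]>\length(C)$, together with $C[\tail(T)+1]=\infty$ — and the survivor equals $-[\,T[\tail(C)+1]\le C[\tail(C)]\,]=N_{t+1}$. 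In the regular case $\head(C)>\head(T)$, $\tail(C)>\tail(T)$ the two $C$-side terms vanish and the two $T$-side terms combine, via $C_{dom}[\tail(T)+1]\le\length(T)$, to $-[\,T[\tail(T)]<C[\tail(T)+1]\,]=L_{t+1}$. Lemma \ref{rmk:obs2} underlies these evaluations by restricting which blocks are possible at the boundary index $t+1$. I expect this boundary bookkeeping to be the main obstacle: the interior telescoping is clean once the minimality $C[p]\ge C_{dom}[p]$ is observed, but pinning down the two ends — with the correct signs and the right $N$-versus-$L$ dichotomy — requires a careful, case-separated check for fundamental and regular pairs.
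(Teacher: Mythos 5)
Your proposal is correct and follows essentially the same route as the paper's proof: the same expansion of $\gamma(C,T)=d(C,T)-d(T,C)$ using the indicator formulas for $u_{i,j}$ and $v_{i,j}$ from \eqref{map:qmon} and \eqref{eq:aAction}, the same index shift producing $\sum_{p}L_p^{\pm}$ with a single uncancelled tail term per main sum, the same collapse of the dominant-monomial sums to one boundary indicator (your telescoping argument is just a restatement of the paper's observation that $m_{C_{dom}}$ is a single variable $Y_{\length(C),\,-\tail(C)-\head(C)}$), and the same fundamental-versus-regular case analysis of $\Bd(C,T)$. Your sign bookkeeping at the tail (e.g.\ $\Bd(C,T)=-\left<T[t+1]\leq C[t]\right>=N_{t+1}$ in the fundamental case) is in fact slightly more careful than the paper's displayed chain, which drops a minus sign typographically before arriving at the same conclusion.
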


\begin{remark}
In other words, $\gamma(C,T)$ counts the number of $L^+$-blocks minus the number of $L^-$-blocks in $(C,T)$ with  one more contribution from the tail block. 
\end{remark}

\begin{proof} 
Let's decompose $L_p$ into its positive $L_p^+$ and negative $L_p^-$ parts such that 
\begin{eqnarray*}
L_p = L_p^+ - L_p^- 
\end{eqnarray*}
Recall that $\gamma(C,T) = d(C,T) - d(T,C)$. The calculation of $d(C,T)$ is borrowed from \cite{nakajima2002t}. For the reader's convenience, we reproduce it here. 

We want to compute (see Definition \ref{def:Nakajima}, Remark \ref{rmk:simplegamma}, and \eqref{simpleuv} )
\begin{equation} \label{d}
d(C,T)=\sum_{i,p} v_{i,i-2p}(C)u_{i,i-2p-1}(T) + \sum_{i,p}u_{i,i-2p}(C_{dom})v_{i,i-2p-1}(T) \,\, .
\end{equation}
Let $\left< condition \right>$ be $1$ if the $condition$ is true and $0$ otherwise. By \eqref{eq:aAction} and Definition \ref{uv}, we know
\begin{equation} \label{1}
v_{i,i-2p}(C) = \left<C_{dom}[p] \leq i \leq C[p]-1\right> \,\, .
\end{equation}
By \eqref{map:qmon} and Definition \ref{uv}, we have
\begin{equation} \label{2}
u_{i,i-2p-1}(T) = \left<T[p]=i\right> - \left<T[p+1]=i+1\right>\,\, .
\end{equation}
Putting \eqref{1} and \eqref{2} together,
\begin{eqnarray*}
\sum_i v_{i,i-2p}(C) u_{i,i-2p-1}(T) &=& \sum_i \left<C_{dom}[p] \leq i \leq C[p]-1\right>  \left(  \left<T[p]=i\right> - \left<T[p+1]=i+1\right> \right)\\
&=&\left<C_{dom}[p] \leq T[p] < C[p]\right> - \left<C_{dom}[p + 1] \leq T[p+1] \leq C[p]\right> \,\, ,
\end{eqnarray*}
where we used $C_{dom}[p+1] = C_{dom}[p]+1$. We want to sum up the above value for all $p$. By shifting the summation index $p$, we consider the sums of the form
\begin{eqnarray*}
- \left< C_{dom}[p] < T[p] \leq C[p-1] \right> + \left< C_{dom}[p] \leq T[p] < C[p]\right> = \left< C[p-1]<T[p]<C[p]\right> = L_p^+ \,\, .
\end{eqnarray*}
Now, summing over all $p$, we obtain:
\begin{equation} \label{A}
\sum_{i,p} v_{i,i-2p}(C)u_{i,i-2p-1}(T) = \sum_{p=h}^t L_p^+ - \left< C_{dom}[t] < T[t+1] \leq C[t]\right> \,\, ,
\end{equation}
which gives the first half of $d(C,T)$. The second half of \eqref{d} consists of a single term. That is because $m_{C_{dom}} = Y_{i, j}$, where $i = \length(C)= \tail(C)-\head(C) + 1$ and $j= -\tail(C) - \head(C)$. By setting $j = i-2p+1$, we evaluate $p^*=\tail(C) + 1$ and $i^*=\length(C)$ are the only values for which $u_{i^*,i^*-2p^*+1}(C_{dom}) = 1$. Therefore, 
\begin{eqnarray} 
\sum_{i,p}u_{i,i-2p+1}(C_{dom})v_{i,i-2p}(T)  &=& v_{i^*,i^*-2p^*}(T) = \left<T_{dom}[p^*]\leq i^*\leq T[p^*]-1\right> \nonumber\\
&=&\left<  T_{dom}[\tail(C) + 1]\leq \length(C) \leq T[\tail(C)+1]-1 \right> \label{B} \,\, .
\end{eqnarray}
By adding \eqref{A} and \eqref{B}, we obtain $d(C,T)$. The value of $d(T,C)$ is obtained by switching $C$ and $T$, which gives a sum of $L_p^-$ in the equivalent of \eqref{A}. Putting everything together, we obtain
\begin{eqnarray*}
\gamma(C,T) = d(C,T) - d(T,C) = \sum_{p=n}^m (L_p^+ - L_p^-)  + \Bd(C,T) \,\, ,
\end{eqnarray*}
where $Bd(C,T)$ is the boundary term given by
\begin{eqnarray*}
\Bd(C,T) &=&  \left< T_{dom}[t] < C[t+1] \leq T[t]\right>  - \left< C_{dom}[t] < T[t+1] \leq C[t]\right> \\
&&+ \left<  T_{dom}[\tail(C) + 1]\leq \length(C)< T[\tail(C)+1] \right> \\
&&- \left<  C_{dom}[\tail(T) + 1]\leq \length(T)< C[\tail(T)+1] \right>\,\, .
\end{eqnarray*}
We simplify the boundary term next. We consider two cases: when $(C,T)$ is fundamental and when $(C,T)$ is regular. 

Suppose $(C,T)$ is fundamental. Then $\tail(C) = t$, $\tail(T)\geq t$, and $C[p]=C_{dom}[p]=\infty$ for $p\geq t+1$ (see Figure~\ref{types}). Notice that,
\begin{equation}\label{eq:rel1}
C_{dom}[t] = \length(C) \leq \length(T[\head(T),t+1]) = T_{dom}[t+1] \leq T[t+1] \,\, .
\end{equation}
We compute
\begin{eqnarray*}
 0 &=& \left< T_{dom}[t] < \underbrace{C[t+1]}_\text{$\infty$} \leq \underbrace{T[t]}_\text{$\neq \infty$}\right> = \left<  \underbrace{T_{dom}[t + 1]\leq \length(C)}_\text{never true by Eq. \ref{eq:rel1}} < T[t+1] \right> \\&=& \left<  \underbrace{C_{dom}[\tail(T) + 1]}_\text{$\infty$ since $\tail(T)\geq t$} \leq \length(T)< C[\tail(T)+1] \right>\,\, .
\end{eqnarray*}
Thus,
\begin{equation}
\Bd(C,T) = \left< \underbrace{C_{dom}[t] < T[t+1]}_\text{always true by Eq. \ref{eq:rel1}} \leq C[t] \right> = \left< T[t+1] \leq C[t]\right> = N_{t+1}
\end{equation}
Suppose $(C,T)$ is regular. Then $\tail(T) = t$, $\tail(C) \geq t$, and $T[p]= T_{dom}[p]=\infty$ for all $p\geq t+1$. Therefore,
\begin{eqnarray*}
\left< C_{dom}[t] < \underbrace{T[t+1]}_\text{$\infty$} \leq \underbrace{C[t]}_\text{$\neq \infty$} \right> =\left<  \underbrace{T_{dom}[\tail(C) + 1]}_\text{$\infty$ since $\tail(C)\geq t$}\leq \length(C) < T[\tail(C)+1] \right> = 0 \,\, .
\end{eqnarray*}
Notice that
\begin{eqnarray*}
C_{dom}[t+1] = t+1 - \head(C) + 1 < \underbrace{t - \head(C) + 1 \leq \tail(T) - \head(T) + 1}_\text{$\head(C)>\head(T)$ (see Figure~\ref{types}) and $\tail(T)=t$} = \length(T) \,\, .
\end{eqnarray*}
Therefore, removing the condition that is always satisfied and substituting $\length(T)=T_{dom}[t]$, we obtain:
\begin{eqnarray*}
\Bd(C,T) &=& \left< T_{dom}[t] < C[t+1] \leq T[t]\right> - \left<  T_{dom}[t]< C[t+1] \right> \\
&=& - \left< T[t]<C[t+1] \right> = - \left< T[t]<C[t+1] <T[t+1]\right> = - L_{t+1}^- = L_{t+1} \,\, ,
\end{eqnarray*}
where we added the condition $C[t+1] < T[t+1]=\infty$, which is always satisfied and, therefore, does not affect the outcome. 
\end{proof}

\subsection{Exchanging boxes, compatibility conditions, and the involution $\sigma$} \label{inv}
In this section, we define the core concept of exchanging boxes of tableaux, which defines the map $\sigma$ and the three subsets $\mathcal{P}_0,\mathcal{P}_1,\mathcal{P}_{-1}$ of the set $\mathcal{B}_{k,j}^{(i)}\times \mathcal{B}_{k',j'}^{(i')}$ for any $(i,j,k)$ and $(i',j',k')$.

\begin{defn}
Let $(C,T)$ be a pair of column tableaux. A \emph{strip} in $(C,T)$ is a slice of $(C,T)$ given by $(C[p_0,p_1], T[p_0,p_1])$ for some $p_0<p_1\in \supp(C)\cap \supp(T)$. An \emph{$L$-strip} is a strip that starts with an $L$-block and includes all the $N$-blocks that follow. 
\end{defn}

\begin{defn}
Let $(C,T)$ be a pair of column KR-tableaux. We say an $L$-strip in $(C,T)$ is \emph{column-compatible} if it is possible to exchange the boxes in the $L$-strip and obtain a pair of valid column KR-tableaux. That is, the new tableaux also have strictly increasing columns.  
\end{defn}

\begin{example} \label{ex:columncompat}
Here is an $L^+$-strip starting at $p=0$ and ending at $p=2$. It is column-compatible because it is possible to exchange $C[0,2]$ and $T[0,2]$ and obtain a new pair of column tableaux $(C',T')$. Notice that both $C'$ and $T'$ have strictly increasing columns.  
\begin{center}
\begin{tikzpicture}
\node at (0,0) {$
\ytableausetup
{mathmode, boxsize=1em}
\begin{ytableau}
\none[p]&\none&\none[C] 	&\none	&\none[T]\\
\none&\none&\none	&\none	&\none\\
\none&\none&1		&\none	&\none\\
\none[\scriptstyle 0]&\none&*(yellow)\mathbf{3}		&\none	&*(yellow)\mathbf{2}\\
\none[\scriptstyle 1]&\none&*(yellow)\mathbf{4}		&\none	&*(yellow)\mathbf{3}\\
\none[\scriptstyle 2]&\none&*(yellow)\mathbf{5}		&\none	&*(yellow)\mathbf{4}\\
\none &\none&7		&\none	&9\\
\none&\none&8		&\none	\\
\end{ytableau}$};

\draw[->] (2,0)--(4.5,0);
\node at (3.2,0)[above] {exchanging};

\node at (6,0) {$
\begin{ytableau}
\none[C'] 	&\none	&\none[T']\\
\none	&\none	&\none\\
1		&\none	&\none\\
*(yellow)\mathbf{2}		&\none	&*(yellow)\mathbf{3}\\
*(yellow)\mathbf{3}		&\none	&*(yellow)\mathbf{4}\\
*(yellow)\mathbf{4}		&\none	&*(yellow)\mathbf{5}\\
7		&\none	&9\\
8		&\none	
\end{ytableau}$};
\end{tikzpicture}
\end{center}
\end{example}

\begin{remark} \label{cross-pattern}
For a strip to be column-compatible, it must begin and end with an equality condition of a cross-pattern, which occurs at $LU$-blocks. Indeed, 

\begin{center}
\begin{tikzpicture}

\node at (-5,0) {
\begin{tikzpicture}
\node at (0.5*\X, \X*5) {$\scriptstyle{C}$};

\draw[dotted] (0,-1*\X) rectangle (\X*1,\X*0) node[fitting node] (c-1) {};
\draw (0,0) rectangle (\X*1,\X*1) node[fitting node] (c00) {};
\draw (0, \X*1) rectangle (\X*1, \X*2) node[fitting node] (c01) {};
%\draw (0, \X*2) rectangle (\X*1, \X*3) node[fitting node] (c02) {};
\draw (0, \X*2) rectangle (\X*1, \X* 3) node[fitting node] (c03) {};
\draw[dotted] (0, \X*3) rectangle (\X*1, \X* 4) node[fitting node] (c04) {};

\node at (1.5*\X, \X*5) {$\scriptstyle{T}$};
\draw[dotted] (\X*1,-1*\X) rectangle (\X*2,\X*0) node[fitting node] (t-1) {};
\draw (\X*1,\X*0) rectangle (\X*2,\X*1) node[fitting node] (t00) {};
\draw (\X*1,\X*1) rectangle (\X*2,\X*2) node[fitting node] (t01) {};
%\draw (\X*1,\X*2) rectangle (\X*2,\X*3) node[fitting node] (t02) {};
\draw (\X*1,\X*2) rectangle (\X*2,\X*3) node[fitting node] (t03) {};
\draw[dotted] (\X*1,\X*3) rectangle (\X*2,\X*4) node[fitting node] (t04) {};

\node at (\X*1, -2*\X) {$\scriptstyle \text{cross-pattern}$
};

\draw[arrows={- latex'}] (c04.center) -- (t03.center);
\draw[arrows={- latex'}] (t04.center) -- (c03.center);
%\draw[arrows={- latex'}] (t03.center) -- (t02.center);
\draw[arrows={- latex'}] (t03.center) -- (t01.center);
\draw[arrows={- latex'}] (t01.center) -- (t00.center);
\draw[arrows={- latex'},red] (t00.center) -- (c-1.center);

%\draw[arrows={- latex'}] (c03.center) -- (c02.center);
\draw[arrows={- latex'}] (c03.center) -- (c01.center);
\draw[arrows={- latex'}] (c01.center) -- (c00.center);
\draw[arrows={- latex'},red] (c00.center) -- (t-1.center);

\end{tikzpicture}
};
   
\draw[arrows={latex' - latex'},thick] (-3,-.75) -- (-2,-.75);

\node at (0,-1) {
\begin{tikzpicture}
\draw[dotted] (0*\X,0*\X) rectangle (1*\X,1*\X) node[fitting node] (00) {};
\draw (0*\X,1*\X) rectangle (1*\X,2*\X) node[fitting node] (01) {};
\draw[dotted] (1*\X,0*\X) rectangle (2*\X,1*\X) node[fitting node] (10) {};
\draw (1*\X,1*\X) rectangle (2*\X,2*\X) node[fitting node] (11) {};

\draw[arrows={- latex'},thick] (01.center) -- (10.center);
\draw[arrows={- latex'},thick] (10.center) -- (00.center);
\draw[arrows={- latex'},thick,red] (11.center) -- (10.center);

\node at (1*\X,-1*\X) { $\scriptstyle L^+-\text{block}$};
\end{tikzpicture}
};

\node at (3,-1) {
\begin{tikzpicture}
\draw[dotted] (0*\X,0*\X) rectangle (1*\X,1*\X) node[fitting node] (00) {};
\draw (0*\X,1*\X) rectangle (1*\X,2*\X) node[fitting node] (01) {};
\draw[dotted] (1*\X,0*\X) rectangle (2*\X,1*\X) node[fitting node] (10) {};
\draw (1*\X,1*\X) rectangle (2*\X,2*\X) node[fitting node] (11) {};

\node at (1*\X,-1*\X) { $\scriptstyle L^- -\text{block}$};

\draw[arrows={latex'-},thick] (00.center) -- (11.center);
\draw[arrows={latex' -},thick] (10.center) -- (00.center);
\draw[arrows={- latex'},thick,red] (01.center) -- (00.center);
\end{tikzpicture}
};

\node at (6,-1) {
\begin{tikzpicture}
\draw[dotted] (0*\X,0*\X) rectangle (1*\X,1*\X) node[fitting node] (00) {};
\draw (0*\X,1*\X) rectangle (1*\X,2*\X) node[fitting node] (01) {};
\draw[dotted] (1*\X,0*\X) rectangle (2*\X,1*\X) node[fitting node] (10) {};
\draw (1*\X,1*\X) rectangle (2*\X,2*\X) node[fitting node] (11) {};

\node at (1*\X,-1*\X) { $\scriptstyle U -\text{block}$};

\draw[arrows={latex' - latex'},thick]  (00.center) -- (10.center);
\draw[arrows={- latex'},thick,red] (11.center) -- (10.center);
\draw[arrows={- latex'},thick,red] (01.center) -- (00.center);
\end{tikzpicture}
};
\end{tikzpicture}
\end{center}
An $L$-strip can either be followed by an $LU$-block or by nothing at all. In the first case, the $L$-strip is always column-compatible. In particular, this means only the very last $L$-strip is potentially not column-compatible. All others are column-compatible since they are, by definition, at the very least followed by the next $L$-block.
\end{remark}

\begin{example} \label{ex:noncolumncompat}
Here is an $L^-$-strip starting at $p=0$ and ending at $p=3$. It is not column-compatible since exchanging $C[0,3]$ and $T[0,3]$ will violate the strictly increasing columns condition post-exchange. 
\begin{center}
\begin{tikzpicture}
\node at (0,0) {$
\ytableausetup
{mathmode, boxsize=1em}
\begin{ytableau}
\none[p]&\none&\none[C] 	&\none	&\none[T]\\
\none&\none&\none	&\none	&\none\\
\none&\none&1		&\none	&\none\\
\none[\scriptstyle 0]&\none&*(yellow)\mathbf{2}		&\none	&*(yellow)\mathbf{3}\\
\none[\scriptstyle 1]&\none&*(yellow)\mathbf{3}		&\none	&*(yellow)\mathbf{4}\\
\none[\scriptstyle 2]&\none&*(yellow)\mathbf{4}		&\none	&*(yellow)\mathbf{5}\\
\none[\scriptstyle 3] &\none&*(yellow)\mathbf{5}		&\none	&*(yellow)\mathbf{6}\\
\none&\none&6	&\none	\\
\end{ytableau}$};

\draw[->] (2,0)--(4.5,0);
\node at (3.2,0)[above] {block-tableau};

\node at (6,0) {$
\begin{ytableau}
\none[B_{CT}] 	\\
\none	\\
\none		\\
\scriptscriptstyle L^-		\\
\scriptscriptstyle N^+		\\
\scriptscriptstyle N^+		\\
\scriptscriptstyle N^+		\\
\scriptscriptstyle N^+	\\
\end{ytableau}$};
\end{tikzpicture}
\end{center}
\end{example}

\begin{defn} \label{defn:compat}
Let $C=(C_{0}, C_1)$ and $T=(T_{0},T_1)$ be KR-tableaux and let $B_{i,j}$ denote the block-tableau of $(C_i,T_j)$. 

Suppose $(C_1,T_1)$ forms an $L$-strip.  We say $(C_1,T_1)$ is \emph{left-compatible} if the weakly increasing diagonals conditions in Definition \ref{defn:KRtableau}(2) are not violated when $C_1$ and $T_1$ are exchanged. Pictorially, $(C_1,T_1)$ is left-compatible if conditions $(lC)$ (left-compatibility from $C$) and $(lT)$ (left-compatibility from $T$) are satisfied:

\begin{center}
\begin{tikzpicture}

\node at (0.5*\X, \X*6) {$\scriptstyle{C_{0}}$};
\draw (0,0) rectangle (\X*1,\X*1) node[fitting node] (00) {};
\draw (0, \X*1) rectangle (\X*1, \X*2) node[fitting node] (01) {};
\draw (0, \X*2) rectangle (\X*1, \X*3) node[fitting node] (02) {};
\draw (0, \X*3) rectangle (\X*1, \X* 4) node[fitting node] (03) {};

\node at (1.5*\X, \X*6) {$\scriptstyle{C_1}$};
\draw (\X*1,\X*1) rectangle (\X*2,\X*2) node[fitting node] (11) {};
\draw (\X*1,\X*2) rectangle (\X*2,\X*3) node[fitting node] (12) {};
\draw (\X*1,\X*3) rectangle (\X*2,\X*4) node[fitting node] (13) {};
\draw (\X*1,\X*4) rectangle (\X*2,\X*5) node[fitting node] (14) {};

\node at (0.5*\X+\D, \X*6) {$\scriptstyle{T_{0}}$};
\draw (0+\D,0) rectangle (\X*1+\D,\X*1) node[fitting node] (t00) {};
\draw (0+\D, \X*1) rectangle (\X*1+\D, \X*2) node[fitting node] (t01) {};
\draw (0+\D, \X*2) rectangle (\X*1+\D, \X*3) node[fitting node] (t02) {};
\draw (0+\D, \X*3) rectangle (\X*1+\D, \X* 4) node[fitting node] (t03) {};

\node at (1.5*\X+\D, \X*6) {$\scriptstyle{T_1}$};
\draw (\X*1+\D,\X*1) rectangle (\X*2+\D,\X*2) node[fitting node] (t11) {};
\draw (\X*1+\D,\X*2) rectangle (\X*2+\D,\X*3) node[fitting node] (t12) {};
\draw (\X*1+\D,\X*3) rectangle (\X*2+\D,\X*4) node[fitting node] (t13) {};
\draw (\X*1+\D,\X*4) rectangle (\X*2+\D,\X*5) node[fitting node] (t14) {};

\draw[arrows={- latex'},dotted] (03.center) -- (t14.center);
\draw[arrows={- latex'},dotted] (02.center) -- (t13.center);
\draw[arrows={- latex'},dotted] (01.center) -- (t12.center);
\draw[arrows={- latex'},dotted] (00.center) -- (t11.center);

\draw[arrows={- latex'},dotted,red] (t03.center) -- (14.center);
\draw[arrows={- latex'},dotted,red] (t02.center) -- (13.center);
\draw[arrows={- latex'},dotted,red] (t01.center) -- (12.center);
\draw[arrows={- latex'},dotted,red] (t00.center) -- (11.center);

\draw[arrows={- latex'},thick] (4,1.5) -- (5,1.5);

\node at (0 + 3*\D+.23, \X*6) {$\scriptscriptstyle{B_{0,1}}$};
\draw (0+3*\D,0) rectangle (\X*1+3*\D,\X*1) node[fitting node] (b00) {};
\draw (0+3*\D, \X*1) rectangle (\X*1+3*\D, \X*2) node[fitting node] (b01) {};
\draw (0+3*\D, \X*2) rectangle (\X*1+3*\D, \X*3) node[fitting node] (b02) {};
\draw (0+3*\D, \X*3) rectangle (\X*1+3*\D, \X* 4) node[fitting node] (b03) {};

\node at (b00.center) {$\scriptstyle{N^+}$};
\node at (b01.center) {$\scriptstyle{N^+}$};
\node at (b02.center) {$\scriptstyle{N^+}$};
\node at (b03.center) {$\scriptstyle{N^+}$};

\node at ($(b00)+(0.25,-1)$) {$\scriptstyle{(lC)}$};

\node at (2*\X+3*\D-.23, \X*6) {$\scriptscriptstyle{B_{1,1}}$};
\draw (\X*1+3*\D,\X*1) rectangle (\X*2+3*\D,\X*2) node[fitting node] (b11) {};
\draw (\X*1+3*\D,\X*2) rectangle (\X*2+3*\D,\X*3) node[fitting node] (b12) {};
\draw (\X*1+3*\D,\X*3) rectangle (\X*2+3*\D,\X*4) node[fitting node] (b13) {};
\draw (\X*1+3*\D,\X*4) rectangle (\X*2+3*\D,\X*5) node[fitting node] (b14) {};

\node at (b11.center) {$\scriptstyle{N}$};
\node at (b12.center) {$\scriptstyle{N}$};
\node at (b13.center) {$\scriptstyle{N}$};
\node at (b14.center) {$\scriptstyle{L}$};

\node at (8.5, 1.5) {and};

\node at (0 + 5*\D+.23, \X*6) {$\scriptscriptstyle{B_{1,0}}$};
\draw (0+5*\D,0) rectangle (\X*1+5*\D,\X*1) node[fitting node] (g00) {};
\draw (0+5*\D, \X*1) rectangle (\X*1+5*\D, \X*2) node[fitting node] (g01) {};
\draw (0+5*\D, \X*2) rectangle (\X*1+5*\D, \X*3) node[fitting node] (g02) {};
\draw (0+5*\D, \X*3) rectangle (\X*1+5*\D, \X* 4) node[fitting node] (g03) {};

\node at (g00.center) {$\scriptstyle{N^-}$};
\node at (g01.center) {$\scriptstyle{N^-}$};
\node at (g02.center) {$\scriptstyle{N^-}$};
\node at (g03.center) {$\scriptstyle{N^-}$};

\node at (2*\X+5*\D-.23, \X*6) {$\scriptscriptstyle{B_{1,1}}$};
\draw (\X*1+5*\D,\X*1) rectangle (\X*2+5*\D,\X*2) node[fitting node] (g11) {};
\draw (\X*1+5*\D,\X*2) rectangle (\X*2+5*\D,\X*3) node[fitting node] (g12) {};
\draw (\X*1+5*\D,\X*3) rectangle (\X*2+5*\D,\X*4) node[fitting node] (g13) {};
\draw (\X*1+5*\D,\X*4) rectangle (\X*2+5*\D,\X*5) node[fitting node] (g14) {};

\node at (g11.center) {$\scriptstyle{N}$};
\node at (g12.center) {$\scriptstyle{N}$};
\node at (g13.center) {$\scriptstyle{N}$};
\node at (g14.center) {$\scriptstyle{L}$};

\node at ($(g00)+(0.25,-1)$) {$\scriptstyle{(lT)}$};

\end{tikzpicture}
\end{center}

Suppose $(C_0,T_0)$ forms an $L$-strip. We say $L$-strip $(C_0,T_0)$ is \emph{right-compatible} if the weakly increasing diagonals condition in Definition \ref{defn:KRtableau}(2) is not violated when the boxes in $C_0$ and $T_0$ are exchanged. Pictorially, $(C_0,T_0)$ is right-compatible if conditions $(rC)$ (right-compatibility from $C$) and $(rT)$ (right-compatibility from $T$) are satisfied:

\begin{center}
\begin{tikzpicture}

\node at (0.5*\X, \X*6) {$\scriptstyle{C_{0}}$};
\draw (0,0) rectangle (\X*1,\X*1) node[fitting node] (00) {};
\draw (0, \X*1) rectangle (\X*1, \X*2) node[fitting node] (01) {};
\draw (0, \X*2) rectangle (\X*1, \X*3) node[fitting node] (02) {};
\draw (0, \X*3) rectangle (\X*1, \X* 4) node[fitting node] (03) {};

\node at (1.5*\X, \X*6) {$\scriptstyle{C_1}$};
\draw (\X*1,\X*1) rectangle (\X*2,\X*2) node[fitting node] (11) {};
\draw (\X*1,\X*2) rectangle (\X*2,\X*3) node[fitting node] (12) {};
\draw (\X*1,\X*3) rectangle (\X*2,\X*4) node[fitting node] (13) {};
\draw (\X*1,\X*4) rectangle (\X*2,\X*5) node[fitting node] (14) {};

\node at (0.5*\X+\D, \X*6) {$\scriptstyle{T_{0}}$};
\draw (0+\D,0) rectangle (\X*1+\D,\X*1) node[fitting node] (t00) {};
\draw (0+\D, \X*1) rectangle (\X*1+\D, \X*2) node[fitting node] (t01) {};
\draw (0+\D, \X*2) rectangle (\X*1+\D, \X*3) node[fitting node] (t02) {};
\draw (0+\D, \X*3) rectangle (\X*1+\D, \X* 4) node[fitting node] (t03) {};

\node at (1.5*\X+\D, \X*6) {$\scriptstyle{T_1}$};
\draw (\X*1+\D,\X*1) rectangle (\X*2+\D,\X*2) node[fitting node] (t11) {};
\draw (\X*1+\D,\X*2) rectangle (\X*2+\D,\X*3) node[fitting node] (t12) {};
\draw (\X*1+\D,\X*3) rectangle (\X*2+\D,\X*4) node[fitting node] (t13) {};
\draw (\X*1+\D,\X*4) rectangle (\X*2+\D,\X*5) node[fitting node] (t14) {};

\draw[arrows={- latex'},dotted] (03.center) -- (t14.center);
\draw[arrows={- latex'},dotted] (02.center) -- (t13.center);
\draw[arrows={- latex'},dotted] (01.center) -- (t12.center);
\draw[arrows={- latex'},dotted] (00.center) -- (t11.center);

\draw[arrows={- latex'},dotted,red] (t03.center) -- (14.center);
\draw[arrows={- latex'},dotted,red] (t02.center) -- (13.center);
\draw[arrows={ - latex'},dotted,red] (t01.center) -- (12.center);
\draw[arrows={- latex'},dotted,red] (t00.center) -- (11.center);

\draw[arrows={- latex'},thick] (4,1.5) -- (5,1.5);

\node at (0 + 3*\D+.23, \X*6) {$\scriptscriptstyle{B_{0,0}}$};
\draw (0+3*\D,0) rectangle (\X*1+3*\D,\X*1) node[fitting node] (b00) {};
\draw (0+3*\D, \X*1) rectangle (\X*1+3*\D, \X*2) node[fitting node] (b01) {};
\draw (0+3*\D, \X*2) rectangle (\X*1+3*\D, \X*3) node[fitting node] (b02) {};
\draw (0+3*\D, \X*3) rectangle (\X*1+3*\D, \X* 4) node[fitting node] (b03) {};

\node at (b00.center) {$\scriptstyle{N}$};
\node at (b01.center) {$\scriptstyle{N}$};
\node at (b02.center) {$\scriptstyle{N}$};
\node at (b03.center) {$\scriptstyle{L}$};

\node at (2*\X+3*\D-.23, \X*6) {$\scriptscriptstyle{B_{0,1}}$};
\draw (\X*1+3*\D,\X*1) rectangle (\X*2+3*\D,\X*2) node[fitting node] (b11) {};
\draw (\X*1+3*\D,\X*2) rectangle (\X*2+3*\D,\X*3) node[fitting node] (b12) {};
\draw (\X*1+3*\D,\X*3) rectangle (\X*2+3*\D,\X*4) node[fitting node] (b13) {};
\draw (\X*1+3*\D,\X*0) rectangle (\X*2+3*\D,\X*1) node[fitting node] (b14) {};

\node at (b11.center) {$\scriptstyle{N^+}$};
\node at (b12.center) {$\scriptstyle{N^+}$};
\node at (b13.center) {$\scriptstyle{N^+}$};
\node at (b14.center) {$\scriptstyle{N^+}$};

\node at (0+3.25*\D, -1*\X) {$\scriptstyle{(rT)}$};

\node at (8.5, 1.5) {and};

\node at (0 + 5*\D+.23, \X*6) {$\scriptscriptstyle{B_{0,0}}$};
\draw (0+5*\D,0) rectangle (\X*1+5*\D,\X*1) node[fitting node] (g00) {};
\draw (0+5*\D, \X*1) rectangle (\X*1+5*\D, \X*2) node[fitting node] (g01) {};
\draw (0+5*\D, \X*2) rectangle (\X*1+5*\D, \X*3) node[fitting node] (g02) {};
\draw (0+5*\D, \X*3) rectangle (\X*1+5*\D, \X* 4) node[fitting node] (g03) {};

\node at (g00.center) {$\scriptstyle{N}$};
\node at (g01.center) {$\scriptstyle{N}$};
\node at (g02.center) {$\scriptstyle{N}$};
\node at (g03.center) {$\scriptstyle{L}$};

\node at (2*\X+5*\D-.23, \X*6) {$\scriptscriptstyle{B_{1,0}}$};
\draw (\X*1+5*\D,\X*1) rectangle (\X*2+5*\D,\X*2) node[fitting node] (g11) {};
\draw (\X*1+5*\D,\X*2) rectangle (\X*2+5*\D,\X*3) node[fitting node] (g12) {};
\draw (\X*1+5*\D,\X*3) rectangle (\X*2+5*\D,\X*4) node[fitting node] (g13) {};
\draw (\X*1+5*\D,\X*0) rectangle (\X*2+5*\D,\X*1) node[fitting node] (g14) {};

\node at (g11.center) {$\scriptstyle{N^-}$};
\node at (g12.center) {$\scriptstyle{N^-}$};
\node at (g13.center) {$\scriptstyle{N^-}$};
\node at (g14.center) {$\scriptstyle{N^-}$};

\node at (0+5.25*\D, -1*\X) {$\scriptstyle{(rC)}$};

\end{tikzpicture}
\end{center}
\end{defn}

\begin{remark} \label{automaticComp}
An $L^-$-strip always satisfies $(rT)$ and $(lC)$ and an $L^+$-strip always satisfies $(rC)$ and $(lT)$. This can be seen by composing the arrows of the $L$-strip with the arrows within the KR-tableaux. For example, 

\begin{center}
\begin{tikzpicture}

\node at (0.5*\X, \X*6) {$\scriptstyle{C_{0}}$};
\draw (0,0) rectangle (\X*1,\X*1) node[fitting node] (00) {};
\draw (0, \X*1) rectangle (\X*1, \X*2) node[fitting node] (01) {};
\draw (0, \X*2) rectangle (\X*1, \X*3) node[fitting node] (02) {};
\draw (0, \X*3) rectangle (\X*1, \X* 4) node[fitting node] (03) {};

\node at (1.5*\X, \X*6) {$\scriptstyle{C_1}$};
\draw (\X*1,\X*1) rectangle (\X*2,\X*2) node[fitting node] (11) {};
\draw (\X*1,\X*2) rectangle (\X*2,\X*3) node[fitting node] (12) {};
\draw (\X*1,\X*3) rectangle (\X*2,\X*4) node[fitting node] (13) {};
\draw (\X*1,\X*4) rectangle (\X*2,\X*5) node[fitting node] (14) {};

\node at (0.5*\X+\D, \X*6) {$\scriptstyle{T_{0}}$};
\draw[dotted] (0+\D, \X*4) rectangle (\X*1+\D, \X*5) node[fitting node] (t04) {};

\draw (0+\D,0) rectangle (\X*1+\D,\X*1) node[fitting node] (t00) {};
\draw (0+\D, \X*1) rectangle (\X*1+\D, \X*2) node[fitting node] (t01) {};
\draw (0+\D, \X*2) rectangle (\X*1+\D, \X*3) node[fitting node] (t02) {};
\draw (0+\D, \X*3) rectangle (\X*1+\D, \X* 4) node[fitting node] (t03) {};

\node at (1.5*\X+\D, \X*6) {$\scriptstyle{T_1}$};
\draw (\X*1+\D,\X*1) rectangle (\X*2+\D,\X*2) node[fitting node] (t11) {};
\draw (\X*1+\D,\X*2) rectangle (\X*2+\D,\X*3) node[fitting node] (t12) {};
\draw (\X*1+\D,\X*3) rectangle (\X*2+\D,\X*4) node[fitting node] (t13) {};
\draw (\X*1+\D,\X*4) rectangle (\X*2+\D,\X*5) node[fitting node] (t14) {};

\draw[arrows={- latex'},red] (t04.center) -- (03.center);
\draw[arrows={- latex'},red] (03.center) -- (t03.center);
\draw[arrows={- latex'},dotted,color=red] (02.center) -- (t03.center);
\draw[arrows={- latex'},dotted,color=red] (01.center) -- (t02.center);
\draw[arrows={- latex'},dotted,color=red] (00.center) -- (t01.center);

\draw[arrows={- latex},dotted,color=blue] (t03.center) -- (t14.center);
\draw[arrows={- latex'},dotted,color=blue] (t02.center) -- (t13.center);
\draw[arrows={- latex'},dotted,color=blue] (t01.center) -- (t12.center);

\draw[arrows={- latex'},blue] (t14.center) -- (t13.center);
\draw[arrows={- latex'},blue] (t13.center) -- (t12.center);

\draw[arrows={-triangle 45},thick] (4,1.5) -- (5,1.5);

%%%%%%%%%%%%%%%%%%%%%%%%%%%%%%%%%%%%

\node at (0.5*\X+3*\D, \X*6) {$\scriptstyle{C_{0}}$};
\draw (0+3*\D,0) rectangle (\X*1+3*\D,\X*1) node[fitting node] (c00) {};
\draw (0+3*\D, \X*1) rectangle (\X*1+3*\D, \X*2) node[fitting node] (c01) {};
\draw (0+3*\D, \X*2) rectangle (\X*1+3*\D, \X*3) node[fitting node] (c02) {};
\draw (0+3*\D, \X*3) rectangle (\X*1+3*\D, \X* 4) node[fitting node] (c03) {};

\node at (1.5*\X+3*\D, \X*6) {$\scriptstyle{C_1}$};
\draw (\X*1+3*\D,\X*1) rectangle (\X*2+3*\D,\X*2) node[fitting node] (c11) {};
\draw (\X*1+3*\D,\X*2) rectangle (\X*2+3*\D,\X*3) node[fitting node] (c12) {};
\draw (\X*1+3*\D,\X*3) rectangle (\X*2+3*\D,\X*4) node[fitting node] (c13) {};
\draw (\X*1+3*\D,\X*4) rectangle (\X*2+3*\D,\X*5) node[fitting node] (c14) {};

\node at (0.5*\X+4*\D, \X*6) {$\scriptstyle{T_{0}}$};
\draw[white] (0+4*\D, \X*4) rectangle (\X*1+4*\D, \X*5) node[fitting node] (tt04) {};

\draw (0+4*\D,0) rectangle (\X*1+4*\D,\X*1) node[fitting node] (tt00) {};
\draw (0+4*\D, \X*1) rectangle (\X*1+4*\D, \X*2) node[fitting node] (tt01) {};
\draw (0+4*\D, \X*2) rectangle (\X*1+4*\D, \X*3) node[fitting node] (tt02) {};
\draw (0+4*\D, \X*3) rectangle (\X*1+4*\D, \X* 4) node[fitting node] (tt03) {};

\node at (1.5*\X+4*\D, \X*6) {$\scriptstyle{T_1}$};
\draw (\X*1+4*\D,\X*1) rectangle (\X*2+4*\D,\X*2) node[fitting node] (tt11) {};
\draw (\X*1+4*\D,\X*2) rectangle (\X*2+4*\D,\X*3) node[fitting node] (tt12) {};
\draw (\X*1+4*\D,\X*3) rectangle (\X*2+4*\D,\X*4) node[fitting node] (tt13) {};
\draw (\X*1+4*\D,\X*4) rectangle (\X*2+4*\D,\X*5) node[fitting node] (tt14) {};

\draw[arrows={- latex'},dotted] (c03.center) -- (tt14.center);
\draw[arrows={- latex'},dotted] (c02.center) -- (tt13.center);
\draw[arrows={- latex'},dotted] (c01.center) -- (tt12.center);
\draw[arrows={- latex'},dotted] (c00.center) -- (tt11.center);

%%%%%%%%%%%%%%%%%%%%%%%%%%%%%%%%%%%%

\draw[arrows={-triangle 45},thick] (10,1.5) -- (11,1.5);

%\node at (8.5, 1.5) {and};

\node at (0 + 6*\D+.23, \X*6) {$\scriptscriptstyle{B_{0,0}}$};
\draw (0+6*\D,0) rectangle (\X*1+6*\D,\X*1) node[fitting node] (g00) {};
\draw (0+6*\D, \X*1) rectangle (\X*1+6*\D, \X*2) node[fitting node] (g01) {};
\draw (0+6*\D, \X*2) rectangle (\X*1+6*\D, \X*3) node[fitting node] (g02) {};
\draw (0+6*\D, \X*3) rectangle (\X*1+6*\D, \X* 4) node[fitting node] (g03) {};

\node at (g00.center) {$\scriptstyle{N^+}$};
\node at (g01.center) {$\scriptstyle{N^+}$};
\node at (g02.center) {$\scriptstyle{N^+}$};
\node at (g03.center) {$\scriptstyle{L^-}$};

\node at (2*\X+6*\D-.23, \X*6) {$\scriptscriptstyle{B_{0,1}}$};
\draw (\X*1+6*\D,\X*1) rectangle (\X*2+6*\D,\X*2) node[fitting node] (g11) {};
\draw (\X*1+6*\D,\X*2) rectangle (\X*2+6*\D,\X*3) node[fitting node] (g12) {};
\draw (\X*1+6*\D,\X*3) rectangle (\X*2+6*\D,\X*4) node[fitting node] (g13) {};
\draw (\X*1+6*\D,\X*0) rectangle (\X*2+6*\D,\X*1) node[fitting node] (g14) {};

\node at (g11.center) {$\scriptstyle{N^+}$};
\node at (g12.center) {$\scriptstyle{N^+}$};
\node at (g13.center) {$\scriptstyle{N^+}$};
\node at (g14.center) {$\scriptstyle{N^+}$};

\end{tikzpicture}
\end{center}

By composing the red and blue arrows, we see that the condition $(rT)$ is always satisfied. 
\end{remark}

\begin{defn}
Let $(C,T)$ be a pair of KR-tableaux and let $S=(S_1,\ldots, S_k)$, where $S_l$ is a union of one or more $L$-strips in $(C_l,T_l)$. We say $S$ is \emph{exchangeable} if it is possible to exchange the boxes in $S$ and still obtain valid KR-tableaux, i.e. the resulting new pair $(\tilde{C},\tilde{T})$ have strictly increasing columns and weakly increasing diagonals. $S$ is \emph{minimally exchangeable} if removing any nonempty subset of $L$-strips from $S$ results in a non-exchangeable sequence. 
\end{defn}

\begin{example} Consider $C=(C_1,C_2,C_3,C_4)$ and $T=(T_1,T_2,T_3)$. 

\begin{center}
\begin{tikzpicture}
\node at (0,0) {$
\ytableausetup
{mathmode, boxsize=1em}
\begin{ytableau}
\none[\scriptstyle   ]&	\none[ \scriptstyle C_1]&\none[\scriptstyle C_2]&\none[\scriptstyle C_3]&\none[\scriptstyle C_4]&\none&\none&	\none[\scriptstyle T_1]&\none[\scriptstyle T_2]&\none[\scriptstyle T_3]\\
\none[\scriptstyle -4]&\none&\none&\none&\none&\none&\none&\none &\none&\scriptstyle3	\\
\none[\scriptstyle -3]&\none&\none&\none&\scriptstyle6	   &\none&\none&\none&\scriptstyle3       &\scriptstyle4\\
\none[\scriptstyle -2]&\none&\none&\scriptstyle5	&\scriptstyle8	   &\none&\none&\scriptstyle2       &\scriptstyle4       &\scriptstyle6\\
\none[\scriptstyle -1]&\none&\scriptstyle5	     &\scriptstyle7	&\scriptstyle9 	   &\none&\none	    &\scriptstyle4       &\scriptstyle6       &\scriptstyle7\\
\none[\scriptstyle 0]&\scriptstyle5	  &\scriptstyle6	     &*(yellow)\scriptstyle\mathbf{9}	&\scriptstyle10     &\none&\none	    &\scriptstyle6       &\scriptstyle7       &*(yellow)\scriptstyle\mathbf{8}\\
\none[\scriptstyle 1]&\scriptstyle6	  &*(yellow)\scriptstyle\mathbf{9}	     &*(yellow)\scriptstyle\mathbf{10}	&\none&\none&\none    &\scriptstyle7       &*(yellow)\scriptstyle\mathbf{8}       &*(yellow)\scriptstyle\mathbf{9}\\
\none[\scriptstyle 2]&\scriptstyle8	  &*(yellow)\scriptstyle\mathbf{10}	     &\none&\none&\none&\none	    &\scriptstyle8       &*(yellow)\scriptstyle\mathbf{9}       &\none\\
\none[\scriptstyle 3]&*(yellow)\scriptstyle\mathbf{10}	  &\none&\none&\none&\none&\none&*(yellow)\scriptstyle\mathbf{9}	       &\none&\none\\
\none[\scriptstyle 4]&	\none &\none&\none&\none&\none&\none&\none&\none&\none\\
\end{ytableau}
$};

\draw[->] (3.5,0) -- (6.5,0);
\node at (5,0)[above] {exchange};

\node at (10,0) {$
\begin{ytableau}
\none[\scriptstyle   ]&	\none[ \scriptstyle \tilde{C}_1]&\none[\scriptstyle \tilde{C}_2]&\none[\scriptstyle \tilde{C}_3]&\none[\scriptstyle \tilde{C}_4]&\none&\none&	\none[\scriptstyle \tilde{T}_1]&\none[\scriptstyle \tilde{T}_2]&\none[\scriptstyle \tilde{T}_3]\\
\none[\scriptstyle -4]&\none&\none&\none&\none&\none&\none&\none &\none&\scriptstyle3	\\
\none[\scriptstyle -3]&\none&\none&\none&\scriptstyle6	   &\none&\none&\none&\scriptstyle3       &\scriptstyle4\\
\none[\scriptstyle -2]&\none&\none&\scriptstyle5	&\scriptstyle8	   &\none&\none&\scriptstyle2       &\scriptstyle4       &\scriptstyle6\\
\none[\scriptstyle -1]&\none&\scriptstyle5	     &\scriptstyle7	&\scriptstyle9 	   &\none&\none    &\scriptstyle4       &\scriptstyle6       &\scriptstyle7\\
\none[\scriptstyle 0]&\scriptstyle5	  &\scriptstyle6	     &*(yellow)\scriptstyle\mathbf{8}	&\scriptstyle10     &\none&\none   &\scriptstyle6       &\scriptstyle7       &*(yellow)\scriptstyle\mathbf{9}\\
\none[\scriptstyle 1]&\scriptstyle6	  &*(yellow)\scriptstyle\mathbf{8}	     &*(yellow)\scriptstyle\mathbf{9}&\none&\none&\none	    &\scriptstyle7       &*(yellow)\scriptstyle\mathbf{9}       &*(yellow)\scriptstyle\mathbf{10}\\
\none[\scriptstyle 2]&\scriptstyle8	  &*(yellow)\scriptstyle\mathbf{9}	     &\none&\none&\none&\none    &\scriptstyle8       &*(yellow)\scriptstyle\mathbf{10}       &\none\\
\none[\scriptstyle 3]&*(yellow)\scriptstyle\mathbf{9}	  &\none&\none&\none&\none&\none	    &*(yellow)\scriptstyle\mathbf{10}	       &\none&\none\\
\none[\scriptstyle 4]&	\none &\none&\none&\none&\none&\none    &\none&\none&\none\\
\end{ytableau}
$};

\end{tikzpicture}
\end{center}

A sequence of $L$-strips is shown in bold letters above given by $S_1 = (C_1[3],T_1[3])$, $S_2 = (C_2[1,2],T_2[1,2])$, and $S_3 = (C_3[0,1],T_3[0,1])$. Notice that each $L$-strip is column-compatible, $S_1$ is left-compatible, $S_3$ is right-compatible, and $S_1,S_2$ are not right-compatible. The sequence $S$ is exchangeable since it is possible to exchange the colored boxes and still obtain a pair of valid KR-tableaux. It is also minimal since removing any subset results in a non-exchangeable sequence. For example, exchanging boxes of $S_1,S_2$ without exchanging the boxes of $S_3$ will violate the weakly increasing diagonals condition since $S_2$ is not right-compatible. 
\end{example}

\begin{defn}
Given a left-compatible $L$-strip, we say \emph{it can be completed} to an exchangeable sequence if it is possible to include boxes that, when not exchanged, cause violations of the left or right compatibility conditions and achieve an exchangeable sequence. If, in the process, we end up with an irresolvable contradiction, we say the $L$-strip \emph{cannot be completed to an exchangeable sequence}. 
\end{defn}
Let us demonstrate some situations where an $L$-strip cannot be completed to an exchangeable sequence. 
\begin{example}
In both cases demonstrated below, there is an $L$-strip in $(C_1,T_1)$ (bold), which is not right-compatible. We iteratively add all the boxes that cause right or column compatibility violations. In both cases, we run out of boxes to include before resolving all the violations. 
\begin{center}
\begin{tikzpicture}
\node at (0,0) {$
\begin{ytableau}
\none[\scriptstyle   ]&	\none[ \scriptstyle C_{1}]&\none[\scriptstyle C_2]&\none[\scriptstyle C_3]&\none[\scriptstyle C_4]&\none&\none&	\none[\scriptstyle T_1]&\none[\scriptstyle T_2]&\none[\scriptstyle T_3]&\none[\scriptstyle T_4]\\
\none[\scriptstyle -4]&\none&\none&\none&\none&\none&\none&\none &\none&\none&\scriptstyle4	\\
\none[\scriptstyle -3]&\none&\none&\none&\scriptstyle4	   &\none&\none&\none&\none&2       &\scriptstyle5\\
\none[\scriptstyle -2]&\none&\none&*(yellow)\scriptstyle\mathbf{4}	&\scriptstyle6	   &\none&\none&\none&\scriptstyle2       &*(yellow)\scriptstyle\mathbf{3}       &\scriptstyle7\\
\none[\scriptstyle -1]&\none&*(yellow)\scriptstyle\mathbf{4}	     &\scriptstyle6	&\scriptstyle7 	   &\none&\none&\scriptstyle2	    &*(yellow)\scriptstyle\mathbf{3}       &\scriptstyle7       &\scriptstyle8\\
\none[\scriptstyle 0]&*(yellow)\scriptstyle\mathbf{4}	  &*(yellow)\scriptstyle\mathbf{6}	     &\scriptstyle7	&\scriptstyle8     &\none&\none&*(yellow)\scriptstyle\mathbf{3}	    &*(yellow)\scriptstyle\mathbf{4}       &\scriptstyle8       &\scriptstyle9\\
\none[\scriptstyle 1]&*(yellow)\scriptstyle\mathbf{5}	  &*(yellow)\scriptstyle\mathbf{7}	     &\scriptstyle8	&\none&\none&\none&*(yellow)\scriptstyle\mathbf{4}	    &*(yellow)\scriptstyle\mathbf{6}       &\scriptstyle9       &\scriptstyle10\\
\none[\scriptstyle 2]&\scriptstyle6  &*(yellow)\scriptstyle\mathbf{8}     &\none&\none&\none&\none&\scriptstyle6	    &*(yellow)\scriptstyle\mathbf{7}       &\scriptstyle10       &\none\\
\none[\scriptstyle 3]&\scriptstyle7	  &\none&\none&\none&\none&\none&\scriptstyle7	    &*(red)\scriptstyle\mathbf{8}	       &\none&\none\\
\none[\scriptstyle 4]&	\none &\none&\none&\none&\none&\none&\scriptstyle8	    &\none&\none&\none\\
\end{ytableau}
$};

\node at (0,-3) {$(C_2,T_2)$ is not column-compatible};

\node at (8,0) {$
\begin{ytableau}
\none[\scriptstyle   ]&	\none[ \scriptstyle C_1]&\none[\scriptstyle C_2]&\none[\scriptstyle C_3]&\none[\scriptstyle C_4]&\none&\none&	\none[\scriptstyle T_1]&\none[\scriptstyle T_2]&\none[\scriptstyle T_3]&\none[\scriptstyle T_4]\\
\none[\scriptstyle -4]&\none&\none&\none&\none&\none&\none&\none &\none&\none&\scriptstyle4	\\
\none[\scriptstyle -3]&\none&\none&\none&\scriptstyle4	   &\none&\none&\none&\none&*(red)\scriptstyle\mathbf{4}       &\scriptstyle5\\
\none[\scriptstyle -2]&\none&\none&*(yellow)\scriptstyle\mathbf{4}	&\scriptstyle6	   &\none&\none&\none&\scriptstyle3       &*(yellow)\scriptstyle\mathbf{5}       &\scriptstyle7\\
\none[\scriptstyle -1]&\none&*(yellow)\scriptstyle\mathbf{4}	     &\scriptstyle6	&\scriptstyle7 	   &\none&\none&\scriptstyle3	    &*(yellow)\scriptstyle\mathbf{5}       &\scriptstyle7       &\scriptstyle8\\
\none[\scriptstyle 0]&*(yellow)\scriptstyle\mathbf{4}	  &\scriptstyle6	     &\scriptstyle7	&\scriptstyle8     &\none&\none&*(yellow)\scriptstyle\mathbf{5}	    &\scriptstyle6       &\scriptstyle8       &\scriptstyle9\\
\none[\scriptstyle 1]&*(yellow)\scriptstyle\mathbf{5}	  &\scriptstyle7	     &\scriptstyle8	&\none&\none&\none&*(yellow)\scriptstyle\mathbf{6}	    &\scriptstyle7       &\scriptstyle9       &\scriptstyle10\\
\none[\scriptstyle 2]&	*(yellow)\scriptstyle\mathbf{6}  &\scriptstyle8     &\none&\none&\none&\none&*(yellow)\scriptstyle\mathbf{7}	    &\scriptstyle8       &\scriptstyle10       &\none\\
\none[\scriptstyle 3]&*(yellow)\scriptstyle\mathbf{7}	  &\none&\none&\none&\none&\none&*(yellow)\scriptstyle\mathbf{8}	    &\scriptstyle10	       &\none&\none\\
\none[\scriptstyle 4]&	\none &\none&\none&\none&\none&\none&\scriptstyle10	    &\none&\none&\none\\
\end{ytableau}
$};
\node at (8.5,-3) {$(C_3,T_3)$ is not column-compatible};
\end{tikzpicture}
\end{center}

\end{example}

\begin{defn}
Let $(C,T)$ be a pair of KR-tableaux such that no $L$-strip can be completed to an exchangeable sequence. Then we say $(C,T)$ \emph{has no exchangeable sequences}. 
\end{defn}

We now describe the the map $\sigma$. Consider the set $\mathcal{B}_{k,j}^{(i)} \times \mathcal{B}_{k',j'}^{(i')}$ for any $(i,j,k)$ and $(i',j',k')$. We define 
\begin{eqnarray*}
\mathcal{P}_0 := \left\{ (C,T) \big| C\in \mathcal{B}_{k,j}^{(i)}, T\in \mathcal{B}_{k',j'}^{(i')}, \text{ and } (C,T) \text{ has no exchangeable sequences}\right\}
\end{eqnarray*}
Let $(C,T) \in \mathcal{B}_{k,j}^{(i)} \times \mathcal{B}_{k',j'}^{(i')} \backslash \mathcal{P}_0$. By definition, $(C,T)$ has an exchangeable sequence. Suppose $(C',T') \in \mathcal{B}_{k,j}^{(i)} \times \mathcal{B}_{k',j'}^{(i')} \backslash \mathcal{P}_0$ such that $(C',T')$ is obtained from $(C,T)$ by exchanging a minimally exchangeable sequence. Then we assign $(C,T)$ to $\mathcal{P}_1$, $(\tilde{C},\tilde{T})$ to $\mathcal{P}_{-1}$, and define $\sigma(C,T) = (\tilde{C},\tilde{T})$. The exact order is not important. All that matters is that we can partition $\mathcal{B}_{k,j}^{(i)} \times \mathcal{B}_{k',j'}^{(i')} \backslash \mathcal{P}_0$ into $2$ disjoint subsets. Since the elements in $\mathcal{P}_0$ do not have exchangeable sequences, we define $\sigma(C,T) = (C,T)$ for all $(C,T)\in \mathcal{P}_0$.

\subsection{Proof of Theorem \ref{thm:main1}} \label{proofOfMain1}
Let $C=(C_i)$ and $T=(T_j)$ be a pair of KR-tableaux with no exchangeable sequences. We want to show $\gamma(C,T)=0$. Recall from Remark \ref{rmk:gammaAdditive} that:
\begin{eqnarray*}
\gamma(C,T) = \sum_{i,j}\gamma(C_i,T_j)\,\, ,
\end{eqnarray*} 
and by Lemma \ref{gamma}:
\begin{eqnarray*}
\gamma(C_i,T_j) = \sum_{p=h}^{t} L_p (C_i,T_j) + \left\{ \begin{array}{cc} 
N_{t+1}(C_i,T_j) &\text{ if } (C_i,T_j) \text{ is (anti-)fundamental}\\
L_{t+1}(C_i,T_j) &\text{ otherwise}
\end{array}\right.  \,\, ,
\end{eqnarray*} 
where $h=\head(C,T)$ and $t=\tail(C,T)$. 

Let us prove first the theorem in case when both $C$ and $T$ are column tableaux. Recall that column diagrams correspond to fundamental modules ($k=1$) and the fundamental modules in the fundamental cluster $\mathcal{C}$ form (anti-)fundamental pairs (see Section \ref{fundClusterDiagrams} and Definition \ref{def:pairs}).

\begin{lemma} \label{gammaContr}
Let $(C,T)$ be a pair of column KR-tableaux of type (anti-)fundamental with no exchangeable sequences, i.e. $(C,T)\in\mathcal{P}_0$. Then $\gamma(C,T)=0$. 
\end{lemma}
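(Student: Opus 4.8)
The plan is to reduce the whole statement to a single application of Theorem~\ref{gamma} combined with the combinatorial restrictions that $(C,T)\in\mathcal{P}_0$ forces on the block-tableau $B_{CT}$. First I would dispose of the anti-fundamental case: exchangeability of boxes is manifestly symmetric in the two tableaux, so $(C,T)\in\mathcal{P}_0$ iff $(T,C)\in\mathcal{P}_0$, while $\gamma(C,T)=-\gamma(T,C)$ by antisymmetry; hence it suffices to treat the fundamental case. There $\supp(C)\subseteq\supp(T)$, so $h=\head(C)$, $t=\tail(C)$, and Theorem~\ref{gamma} gives
\[
\gamma(C,T)=\sum_{p=h}^{t}L_p(C,T)+N_{t+1}(C,T).
\]

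Next I would extract the structural consequence of $\mathcal{P}_0$. Since $C$ and $T$ are single columns there are no diagonal constraints between distinct columns, so for such a pair a sequence is exchangeable precisely when it is column-compatible. By Remark~\ref{cross-pattern} every $L$-strip except possibly the last is automatically column-compatible, and a column-compatible $L$-strip would yield an exchangeable sequence, contradicting $(C,T)\in\mathcal{P}_0$. Therefore $B_{CT}$ contains at most one $L$-block, and if it contains one, that $L$-strip is not column-compatible, i.e. it is not closed off by an $LU$-block. Consequently $\sum_{p=h}^{t}L_p\in\{-1,0,+1\}$, and the problem collapses to matching this lone contribution against the boundary term $N_{t+1}$.

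The heart of the argument is then a short propagation analysis along $B_{CT}$ using the ordering rules of Remark~\ref{rmk:obs1} and the boundary restrictions of Lemma~\ref{rmk:obs2}. In the fundamental case $C[t+1]=\infty$, so the block at index $t+1$ is either $U$ or $N^-$ (Lemma~\ref{rmk:obs2}(b) excludes $L^+$ and $N^+$, and $C[t+1]=\infty$ excludes $L^-$); thus $N_{t+1}\in\{-1,0\}$. If $B_{CT}$ has no $L$-block, its head is $N^+$ or $U$ (never $N^-$, by Lemma~\ref{rmk:obs2}(a)), and since $N^+\!\to\!N^-$ and $U\!\to\!N$ are forbidden, the column never reaches an $N^-$; the boundary block is therefore $U$, giving $N_{t+1}=0$ and $\gamma=0$. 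If the unique $L$-block is an $L^+$, then $L^+\!\to\!N^+$ and $N^-\!\to\!N^+$ are forbidden, so its strip reads $L^+N^-\cdots N^-$ down to index $t$; not being closed by an $LU$-block, the block at $t+1$ must be the $N^-$ closer, so $N_{t+1}=-1$ and $\gamma=1-1=0$. Finally, a lone $L^-$-strip cannot be the offending one: the same rules force it to read $L^-N^+\cdots N^+$ and hence to require an $N^+$ block at $t+1$, which the fundamental boundary forbids, so it must instead close with an $LU$-block and be column-compatible.

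I expect the main obstacle to be this last propagation step. One must verify carefully that the $N$-blocks of the unclosed strip genuinely continue, without interruption, all the way down to index $t$ (no stray $U$-block sneaks in to close the strip early, which would render it column-compatible and push the pair out of $\mathcal{P}_0$), and that the only admissible closer at the boundary is the $N$-block of the sign dictated by Remark~\ref{rmk:obs1}. Pinning down the boundary block type exactly, and in particular ruling out the sign-incompatible $L^-$ case, is where the interplay between Remark~\ref{rmk:obs1}, Lemma~\ref{rmk:obs2}, and the precise meaning of ``not column-compatible'' has to be handled with care; once that bookkeeping is in place, the cancellation $\sum_{p=h}^{t}L_p+N_{t+1}=0$ is immediate.
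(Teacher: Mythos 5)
Your proposal is correct and takes essentially the same route as the paper's proof: both reduce to a single orientation via the antisymmetry of $\gamma$, use Theorem~\ref{gamma} together with Remark~\ref{cross-pattern} to see that a pair in $\mathcal{P}_0$ of single columns carries at most one (necessarily non-column-compatible) $L$-strip, and then cancel its contribution against the boundary term $N_{t+1}$ by propagating block types with Remark~\ref{rmk:obs1} and the boundary restrictions of Lemma~\ref{rmk:obs2}. The only difference is bookkeeping: you fix the fundamental orientation and split into $L^+$ and $L^-$ sub-cases (your $L^-$ argument is exactly the paper's Lemma~\ref{chain4}), whereas the paper normalizes the lone strip to $L^-$ by passing to $(T,C)$ and handles the strip-free case by an upward propagation to a contradiction.
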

\begin{proof}
Suppose $(C,T)$ admits an $L$-strip. Notice that an $L$-strip in $(C,T)$ is exchangeable if and only if it is column-compatible since there are no columns to the left or to the right of both $C$ and $T$. Since $(C,T)$ has no exchangeable sequences, there can be exactly one $L$-strip. Otherwise, by Remark \ref{cross-pattern}, the $L$-strips other than the last one are all column-compatible, and, therefore, exchangeable. 

We may assume the $L$-strip in question is an $L^-$-strip. If not, we simply consider $(T,C)$ instead, where $L^+$-strips in $(C,T)$ become $L^-$-strips in $(T,C)$. Let $p^*$ be the index of the head of the $L$-strip. Then,
\begin{eqnarray*}
\gamma(C,T) = \underbrace{L_{p^*}(C,T)}_\text{-1} + N_{t+1}(C,T)\,\, ,
\end{eqnarray*}
where $t=\tail(C,T)$. Since $L_{p^*}$ is not column-compatible, it is not followed by an $LU$-block. That is, it must be followed by $N$-blocks only. By Remark \ref{rmk:obs1}(a,b), an $L^-$-block can only be followed by an $N^+$, and $N^+$-blocks cannot be followed by $N^-$-blocks. Therefore, we must have that $N_{t+1}(C,T) = +1$ and $\gamma(C,T) = 0$. 

Suppose $(C,T)$ has no $L$-strips. We can assume $(C,T)$ is a fundamental pair. If not, we simply consider $(T,C)$ instead. If the boundary term is zero, that is $N_{t+1}(C,T)=0$, then $\gamma(C,T)=0$ and the result holds. Suppose the boundary term is not zero. By Remark \ref{rmk:obs2}(b), since $(C,T)$ is a fundamental pair, the last block is never $N^+$. Therefore, $N_{t+1}(C,T)=-1$. By Remark \ref{rmk:obs1}(a,b), an $N^-$-block can only be preceded by $L^+$ or $N^-$. However, there are no $L$-blocks in $(C,T)$. So, there must be only $N^-$-blocks in $(C,T)$. By Remark \ref{rmk:obs2}(a), the very first block cannot be $N^-$. Contradiction. This concludes the proof. 
\end{proof}

\begin{cor}
Non-column-compatible $L$-strips in (anti-)fundamental pairs do not contribute to $\gamma$ since their contribution is always canceled out by the boundary term. 
\end{cor}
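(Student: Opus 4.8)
The plan is to extract the statement directly from the computation carried out in the proof of Lemma~\ref{gammaContr}, isolating the single non-column-compatible $L$-strip and pairing its contribution against the boundary term in the $\gamma$-formula of Theorem~\ref{gamma}. First I would invoke Remark~\ref{cross-pattern}: among all $L$-strips of a pair of column KR-tableaux, only the last one can fail to be column-compatible, and it runs from its head index $p^{*}$ down to the common tail $t$. In the formula $\gamma(C,T)=\sum_{p=h}^{t}L_p+N_{t+1}$ (the (anti-)fundamental case), this strip enters only through $L_{p^{*}}$, since the interior $N$-blocks it absorbs contribute $0$ to $\sum_p L_p$, together with the single boundary term $N_{t+1}$. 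So it suffices to show $L_{p^{*}}+N_{t+1}=0$.

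Next I would reduce to one sign. Replacing $(C,T)$ by $(T,C)$ negates $\gamma$, interchanges $L^{+}\leftrightarrow L^{-}$ and $N^{+}\leftrightarrow N^{-}$, and swaps fundamental with anti-fundamental; hence I may assume the strip is an $L^{-}$-strip, so $L_{p^{*}}=-1$, and the goal becomes $N_{t+1}=+1$. Because the strip is not column-compatible it is not terminated by an $LU$-block, so every block following the head $L^{-}$-block---up to and including the boundary block at index $t+1$---is an $N$-block. By the adjacency constraints of Remark~\ref{rmk:obs1} an $L^{-}$-block is never followed by an $N^{-}$-block and an $N^{+}$-block is never followed by an $N^{-}$-block, so all of these blocks are $N^{+}$; in particular the boundary block is $N^{+}$, i.e. $N_{t+1}=+1$, giving $L_{p^{*}}+N_{t+1}=0$.

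The step I expect to be the main obstacle is justifying that the boundary block at $t+1$ is genuinely $N^{+}$, since $t+1$ lies outside the common support and its type depends on which of $C,T$ is the shorter column. Here I would lean on Lemma~\ref{rmk:obs2}: for a fundamental pair the boundary block lies in $\{L^{-},N^{-},U\}$, while for an anti-fundamental pair---applying Lemma~\ref{rmk:obs2}(b) to $(T,C)$ and swapping signs---it lies in $\{L^{+},N^{+},U\}$. Non-column-compatibility rules out the $L$ and $U$ options, and the prohibition on an $N^{+}$-block being followed by an $N^{-}$-block rules out the fundamental alternative entirely; thus a non-column-compatible $L^{-}$-strip can occur only in an anti-fundamental pair, where the boundary is pinned to $N^{+}$. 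Carrying out this sign bookkeeping---matching $L^{-}$ with anti-fundamental and, symmetrically, $L^{+}$ with fundamental, so that the boundary term always cancels the head block---is the delicate part, after which the corollary is immediate.
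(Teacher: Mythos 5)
Your proposal is correct and takes essentially the same route as the paper, which derives this corollary from the proof of Lemma~\ref{gammaContr}: reduce via $(C,T)\mapsto(T,C)$ to an $L^-$-strip, observe that non-column-compatibility means the strip is followed by $N$-blocks only, and use Remark~\ref{rmk:obs1} to force these (through the boundary index $t+1$) to be $N^+$, so that $N_{t+1}=+1$ cancels $L_{p^*}=-1$. Your extra sign bookkeeping via Lemma~\ref{rmk:obs2}, pinning non-column-compatible $L^-$-strips to anti-fundamental pairs and $L^+$-strips to fundamental ones, is sound and agrees with what the paper records later in Lemma~\ref{chain4} and in the proof of Lemma~\ref{chain1}.
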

Our strategy in proving Theorem \ref{thm:main1} is to find a way to systematically cancel contributions from $L_p$ to $\gamma$ when the sequence cannot be completed to an exchangeable sequence. In the lemmas that follow, we always have $C=(C_0,C_1)$ and $T=(T_0,T_1)$ a pair of KR-tableaux and $(C_0,T_0)$ has a non-exchangeable $L^-$-strip. The goal is to find a unique non-exchangeable $L^+$-strip in $(C,T)$ to cancel out the contribution of $L^-$. Moreover, when the $L^-$ does not contribute to $\gamma$ (as in Lemma \ref{gammaContr}), we want to show that there is no such corresponding $L^+$. We make an exhaustive list of all the possibilities for $L^-$ to be non-exchangeable and give the lemma that address the situation as a reference in Table \ref{tab:cancellations}.

\begin{table}[h]
\begin{tabular}{ |c|c|c|c|c|c|} 
\hline
$(C_0,T_0)$ type& \begin{tabular}{@{}c@{}}$L^-$ in $(C_0,T_0)$ is\\right-compatible\end{tabular} &  \begin{tabular}{@{}c@{}}$L^-$ in $(C_0,T_0)$ is\\column-compatible\end{tabular}&Contributes to $\gamma$&Lemma\\
 \hline\hline
any type & No & Yes &Yes& \ref{chain1}\\ 
 \hline
 (anti-)regular & Yes & No &Yes& \ref{chain2},\ref{chain3}\\ 
 \hline
 (anti-)fundamental & No & No &No& \ref{chain4}\\ 
 \hline
 (anti-)fundamental & Yes & No &No& \ref{chain5}\\ 
 \hline
 \end{tabular}
 \caption{List of non-exchangeable $L^-$-strips}
 \label{tab:cancellations}
\end{table}

\begin{example} \label{ex:chain1}
We demonstrate the most common type of cancellation in Figure~\ref{fig:cancel}. There is $L^-$-block in $(C_0,T_0)$ that is column-compatible, but not right-compatible given by $1<2<3$ (colored, left), which contributes $-1$ to $\gamma$. It is not right-compatible due to $2$ in $C_1$ since the weakly increasing diagonals condition will be violated post-exchange of the $L^-$-strip. However, there is a non-left-compatible $L^+$-block in $(C_1,T_0)$ given by $2<3<4$ (colored, right). The $L^+$-strip is not left-compatible due to $4$ in $C_0$. That is, we found a non-exchangeable $+1$ contribution to $\gamma$ to cancel out the previous $-1$.

\begin{figure}[h]
\begin{center}
\begin{tikzpicture}

\node at (0.5*\X, \X*6) {$\scriptstyle{C_{0}}$};
\draw (0,0) rectangle (\X*1,\X*1) node[fitting node] (00) {};
\draw (0, \X*1) rectangle (\X*1, \X*2) node[fitting node] (01) {};
\draw[fill=yellow] (0, \X*2) rectangle (\X*1, \X*3) node[fitting node] (02) {};
\draw (0, \X*3) rectangle (\X*1, \X* 4) node[fitting node] (03) {};

\node at (03.center) {$\scriptstyle 1$};
\node at (02.center) {$\scriptstyle 2$};
\node at (01.center) {$\scriptstyle 4$};
\node at (00.center) {$\scriptstyle 5$};

\node at (1.5*\X, \X*6) {$\scriptstyle{C_1}$};
\draw (\X*1,\X*1) rectangle (\X*2,\X*2) node[fitting node] (11) {};
\draw (\X*1,\X*2) rectangle (\X*2,\X*3) node[fitting node] (12) {};
\draw[fill=red] (\X*1,\X*3) rectangle (\X*2,\X*4) node[fitting node] (13) {};
\draw (\X*1,\X*4) rectangle (\X*2,\X*5) node[fitting node] (14) {};

\node at (14.center) {$\scriptstyle 1$};
\node at (13.center) {$\scriptstyle 2$};
\node at (12.center) {$\scriptstyle 4$};
\node at (11.center) {$\scriptstyle 5$};

\node at (0.5*\X+\D, \X*6) {$\scriptstyle{T_{0}}$};
\draw[white] (0+\D, \X*4) rectangle (\X*1+\D, \X*5) node[fitting node] (t04) {};

\draw (0+\D,0) rectangle (\X*1+\D,\X*1) node[fitting node] (t00) {};
\draw (0+\D, \X*1) rectangle (\X*1+\D, \X*2) node[fitting node] (t01) {};
\draw[fill=yellow] (0+\D, \X*2) rectangle (\X*1+\D, \X*3) node[fitting node] (t02) {};
\draw (0+\D, \X*3) rectangle (\X*1+\D, \X* 4) node[fitting node] (t03) {};

\node at (t03.center) {$\scriptstyle 1$};
\node at (t02.center) {$\scriptstyle 3$};
\node at (t01.center) {$\scriptstyle 5$};
\node at (t00.center) {$\scriptstyle 6$};

\draw[arrows={- latex'},red] (t03.west) -- (02.east);
\draw[arrows={- latex'},red] (02.east) -- (t02.west);

\draw[arrows={-triangle 45},thick] (4,1.5) -- (5,1.5);

\node at (0.5*\X+3*\D, \X*6) {$\scriptstyle{C_{0}}$};
\draw (0+3*\D,0) rectangle (\X*1+3*\D,\X*1) node[fitting node] (c00) {};
\draw[fill=red] (0+3*\D, \X*1) rectangle (\X*1+3*\D, \X*2) node[fitting node] (c01) {};
\draw (0+3*\D, \X*2) rectangle (\X*1+3*\D, \X*3) node[fitting node] (c02) {};
\draw (0+3*\D, \X*3) rectangle (\X*1+3*\D, \X* 4) node[fitting node] (c03) {};

\node at (c03.center) {$\scriptstyle 1$};
\node at (c02.center) {$\scriptstyle 2$};
\node at (c01.center) {$\scriptstyle 4$};
\node at (c00.center) {$\scriptstyle 5$};

\node at (1.5*\X+3*\D, \X*6) {$\scriptstyle{C_1}$};
\draw (\X*1+3*\D,\X*1) rectangle (\X*2+3*\D,\X*2) node[fitting node] (c11) {};
\draw[fill=yellow] (\X*1+3*\D,\X*2) rectangle (\X*2+3*\D,\X*3) node[fitting node] (c12) {};
\draw (\X*1+3*\D,\X*3) rectangle (\X*2+3*\D,\X*4) node[fitting node] (c13) {};
\draw (\X*1+3*\D,\X*4) rectangle (\X*2+3*\D,\X*5) node[fitting node] (c14) {};

\node at (c14.center) {$\scriptstyle 1$};
\node at (c13.center) {$\scriptstyle 2$};
\node at (c12.center) {$\scriptstyle 4$};
\node at (c11.center) {$\scriptstyle 5$};

\node at (0.5*\X+4*\D, \X*6) {$\scriptstyle{T_{0}}$};
\draw[white] (0+4*\D, \X*4) rectangle (\X*1+4*\D, \X*5) node[fitting node] (tt04) {};

\draw (0+4*\D,0) rectangle (\X*1+4*\D,\X*1) node[fitting node] (tt00) {};
\draw (0+4*\D, \X*1) rectangle (\X*1+4*\D, \X*2) node[fitting node] (tt01) {};
\draw[fill=yellow] (0+4*\D, \X*2) rectangle (\X*1+4*\D, \X*3) node[fitting node] (tt02) {};
\draw (0+4*\D, \X*3) rectangle (\X*1+4*\D, \X* 4) node[fitting node] (tt03) {};

\node at (tt03.center) {$\scriptstyle 1$};
\node at (tt02.center) {$\scriptstyle 3$};
\node at (tt01.center) {$\scriptstyle 5$};
\node at (tt00.center) {$\scriptstyle 6$};

\draw[arrows={- latex'},red] (c13.east) -- (tt02.west);
\draw[arrows={- latex'},red] (tt02.west) -- (c12.east);
\end{tikzpicture}
\end{center}
\caption{Cancellation of non-right-compatible $L^-$-strips with non-left-compatible $L^+$-strips}
\label{fig:cancel}
\end{figure}
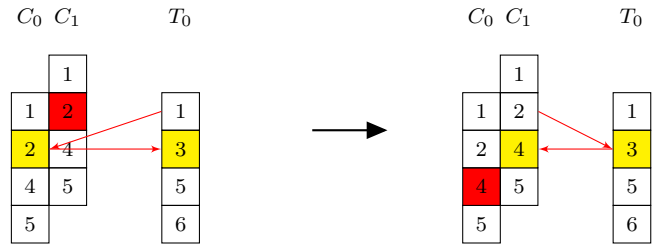

\end{example}

\begin{remark}\label{rmk:obs3}

Suppose $C=(C_0,C_1)$ and $T_0$ are KR-tableaux. Suppose there is an $LU$-block at $p$ in $(C_0,T_0)$. Then there is an $L^+N^-$-block at $p-1$ in $(C_1,T_0)$ (see Figure~\ref{fig:obs3}). Indeed, 
\begin{figure}[h]
\begin{center}
\begin{tikzpicture}

\node at (0,0) {
\begin{tikzpicture}
\node at (0.5*\X, \X*5) {$\scriptstyle{C_0}$};

\draw (0, \X*2) rectangle (\X*1, \X*3) node[fitting node] (02) {};
\draw (0, \X*3) rectangle (\X*1, \X* 4) node[fitting node] (03) {};
\node at ($(02) +(-.6,0)$) {$\scriptstyle p$};
\node at ($(03) +(-.6,0)$) {$\scriptstyle p-1$};

\node at (\X*1.5, \X*5) {$\scriptstyle{C_1}$};
\draw (\X*1,\X*2) rectangle (\X*2,\X*3) node[fitting node] (12) {};
\draw (\X*1,\X*3) rectangle (\X*2,\X*4) node[fitting node] (13) {};

\node at (0.5*\X+\D, \X*5) {$\scriptstyle{T_0}$};
\draw (0+\D, \X*2) rectangle (\X*1+\D, \X*3) node[fitting node] (t02) {};
\draw (0+\D, \X*3) rectangle (\X*1+\D, \X* 4) node[fitting node] (t03) {};

\node at (0.5*\X+\D+1,\X*5) {$\scriptstyle{B_{0,0}}$};
\draw (0+\D+1, \X*2) rectangle (\X*1+\D+1, \X*3) node[fitting node] (b02) {};
\draw (0+\D+1, \X*3) rectangle (\X*1+\D+1, \X* 4) node[fitting node] (b03) {};

\node at (0.5*\X+\D+1.8,\X*5) {$\scriptstyle{B_{1,0}}$};
\draw (\X*1+\D+1, \X*3) rectangle (\X*1+\D+2, \X* 4) node[fitting node] (b13) {};

\node at (b02.center) {$\scriptscriptstyle L^-$};
\node at (b13.center) {$\scriptscriptstyle L^+N^-$};

\draw[arrows={ - latex'},dotted,blue] (02.center) -- (13.center);
\draw[arrows={ - latex'},blue] (t03.center) -- (02.center);
\draw[arrows={ - latex'},red,thick] (t03.center) -- (13.center);

\end{tikzpicture}
};

\node at (6,0) {
\begin{tikzpicture}
\node at (0.5*\X, \X*5) {$\scriptstyle{C_0}$};

\draw (0, \X*2) rectangle (\X*1, \X*3) node[fitting node] (02) {};
\draw (0, \X*3) rectangle (\X*1, \X* 4) node[fitting node] (03) {};
\node at ($(02) +(-.6,0)$) {$\scriptstyle p$};
\node at ($(03) +(-.6,0)$) {$\scriptstyle p-1$};

\node at (\X*1.5, \X*5) {$\scriptstyle{C_1}$};
\draw (\X*1,\X*2) rectangle (\X*2,\X*3) node[fitting node] (12) {};
\draw (\X*1,\X*3) rectangle (\X*2,\X*4) node[fitting node] (13) {};

\node at (0.5*\X+\D, \X*5) {$\scriptstyle{T_0}$};
\draw (0+\D, \X*2) rectangle (\X*1+\D, \X*3) node[fitting node] (t02) {};
\draw (0+\D, \X*3) rectangle (\X*1+\D, \X* 4) node[fitting node] (t03) {};

\node at (0.5*\X+\D+1,\X*5) {$\scriptstyle{B_{0,0}}$};
\draw (0+\D+1, \X*2) rectangle (\X*1+\D+1.1, \X*3) node[fitting node] (b02) {};
\draw (0+\D+1, \X*3) rectangle (\X*1+\D+1.1, \X* 4) node[fitting node] (b03) {};

\node at (0.5*\X+\D+1.8,\X*5) {$\scriptstyle{B_{1,0}}$};
\draw (\X*1+\D+1.1, \X*3) rectangle (\X*1+\D+2, \X* 4) node[fitting node] (b13) {};

\node at (b02.center) {$\scriptscriptstyle L^+U$};
\node at (b13.center) {$\scriptscriptstyle L^+N^-$};

\draw[arrows={ - latex'},dotted,blue] (02.center) -- (13.center);
\draw[arrows={ - latex'},blue,dotted] (t02.center) -- (02.center);
\draw[arrows={ - latex'},blue] (t03.center) -- (t02.center);
\draw[arrows={ - latex'},red,thick] (t03.center) -- (13.center);

\end{tikzpicture}
};
\end{tikzpicture}
\end{center}
\caption{Composition of arrows.}
\label{fig:obs3}
\end{figure}
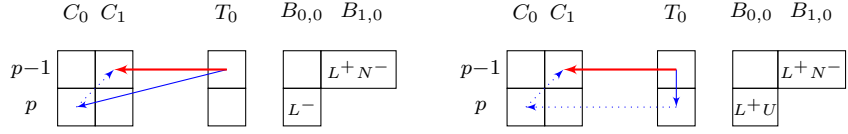
by composing arrows, we find $T_0[p-1]<C_1[p-1]$, which means there cannot be an $L^-U$-block in $(C_1,T_0)$ at $p-1$. Also, there cannot be an $N^+$-block since $T_0[p-2]<T_0[p-1] < C_1[p-1]$. Thus, there can be either $L^+$ or an $N^-$ block at $p-1$ in $(C_1,T_0)$. 
\end{remark}
This next lemma is the main cancellation action demonstrated in Example \ref{ex:chain1}. 
\begin{lemma} \label{chain1}
Let $C=(C_0,C_1)$ and $T=(T_0,T_1)$ be a pair of KR-tableaux. To every right-incompatible $L^-$-strip in $(C_0,T_0)$ that contributes to $\gamma$, there exists a unique left-incompatible $L^+$-strip in $(C_1,T_0)$ that contributes to $\gamma$. All other $L^+$-strips in $(C_1,T_0)$ are left-compatible. 
\end{lemma}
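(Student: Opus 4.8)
The plan is to convert the right-incompatibility of the $L^-$-strip directly into the left-incompatibility of a matching $L^+$-strip, with the pointwise domination of $C_0$ by $C_1$ serving as the bridge. The first thing I would record is that $C_1[p] > C_0[p]$ at every index $p$ of the overlap: composing the strictly increasing column inequality $C_0[p] < C_0[p+1]$ with the weakly increasing diagonal inequality $C_0[p+1] \le C_1[p]$ of Definition \ref{defn:KRtableau} gives $C_1[p-1] \ge C_0[p]$, hence $C_1[p] > C_0[p]$. After replacing $(C,T)$ by $(T,C)$ if necessary I may assume the strip in $(C_0,T_0)$ is genuinely an $L^-$-strip; since this is the case of the first row of Table \ref{tab:cancellations} it is column-compatible, so Remark \ref{cross-pattern} guarantees that it terminates in an $LU$-block (a cross-pattern), and since an $L^-$-strip automatically satisfies $(rT)$ and $(lC)$ by Remark \ref{automaticComp}, its right-incompatibility is exactly a failure of $(rC)$, i.e. $T_0[q] > C_1[q-1]$ for some index $q$ of the strip.

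Next I would produce the $L^+$-block by a local block-chase in $(C_1,T_0)$. The $(rC)$-failure inequality $T_0[q] > C_1[q-1]$ is precisely the left half of an $L^+$-block, and the domination $C_1 > C_0$ together with the $L^-$-inequalities rules out the $N$-type blocks, so by the arrow-composition bookkeeping of Remark \ref{rmk:obs1} (and the configuration of Remark \ref{rmk:obs3}) the relevant block is one of $L^+$, $U$, or $L^-$. Chasing downward toward the $LU$-block that terminates the strip and using $C_1[\cdot] \ge C_0[\cdot+1]$ forces the column comparison $C_1[\cdot]$ versus $T_0[\cdot]$ to cross over exactly once: the terminating cross-pattern supplies an entry with $C_0[\cdot+1] > T_0[\cdot]$, whence $C_1[\cdot] \ge C_0[\cdot+1] > T_0[\cdot]$ furnishes the missing right half $T_0[\cdot] < C_1[\cdot]$ and isolates a single $L^+$-block opening an $L^+$-strip. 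Reading that same inequality $C_0[\cdot+1] > T_0[\cdot]$ as the failure of $(lC)$ after exchange then certifies that this $L^+$-strip is left-incompatible, Remark \ref{automaticComp} having already disposed of $(lT)$ and $(rC)$. Since both strips are delimited by cross-patterns, each is counted in the $\sum_p L_p$ sum of Theorem \ref{gamma} and is not absorbed into the boundary term, so the $-1$ of the $L^-$-strip is cancelled by the $+1$ of the $L^+$-strip.

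The step I expect to be the real obstacle is the \emph{uniqueness} clause together with the assertion that every other $L^+$-strip of $(C_1,T_0)$ is left-compatible. For this I would run the construction backwards: a left-incompatible $L^+$-strip in $(C_1,T_0)$ must fail $(lC)$, meaning that some $C_0$-entry exceeds the $T_0$-value sitting diagonally across from it after exchange; the domination $C_1 > C_0$ and the strict/weak monotonicity constraints then pin this configuration down to lie exactly opposite a right-incompatible $L^-$-strip in $(C_0,T_0)$, so the assignment $L^- \mapsto L^+$ is an injection onto the set of left-incompatible $L^+$-strips. The delicate point is excluding spurious $L^+$-strips arising from unrelated $N$-runs in $(C_1,T_0)$ and showing that any such strip not facing a right-incompatible $L^-$-strip does satisfy $(lC)$; this is where the full rigidity of the three-column configuration $(C_0,C_1,T_0)$, rather than any single pair of columns, must be exploited, and it is the combinatorial heart of the lemma.
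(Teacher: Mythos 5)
Your opening observation $C_0[p]<C_1[p]$ is correct, and your local block-chase in the generic case does reproduce the paper's central mechanism (failure of $(rC)$ gives the left half of an $L^+$-block in $(C_1,T_0)$, and Remarks \ref{rmk:obs1} and \ref{rmk:obs3} force an actual $L^+$ between the head of the $L^-$-strip and the following $LU$-block). But there is a genuine gap: you assume from the outset that the right-incompatible $L^-$-strip is column-compatible and therefore, by Remark \ref{cross-pattern}, terminates in an $LU$-block, and your entire construction of the partner $L^+$ chases down to that terminating cross-pattern. This fails exactly in the cases to which the paper devotes half of its proof. When the strip in question is the \emph{very last} one and $(C_0,T_0)$ is regular, it need not be followed by any $LU$-block at all (it may even fail column-compatibility and still contribute to $\gamma$, since the regular boundary term $L_{t+1}$ in Theorem \ref{gamma} vanishes on trailing $N^+$-blocks); the paper then locates the partner $L^+$ \emph{non-locally}, adjacent to the \emph{first} $L^-$-strip (or first $U$-block) of $(C_0,T_0)$, using Lemma \ref{rmk:obs2} and Remark \ref{rmk:obs3} to trap an $L^+$ between the top of $B_{1,0}$ and the $N^-$-block opposite $\hat{L}^-$. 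Your backwards-uniqueness sketch, which presumes the partner always sits directly across from the $L^-$-strip it cancels, is structurally incompatible with this non-adjacent pairing and would fail there.

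A second, related error: your assertion that the produced $L^+$-strip ``is counted in the $\sum_p L_p$ sum and is not absorbed into the boundary term'' is unjustified and false in general. When $(C_1,T_0)$ is fundamental and the candidate $\tilde{L}^+$ is its last $L$-block, followed only by $N^-$-blocks, the boundary term $N_{t+1}=-1$ of Theorem \ref{gamma} cancels its contribution, so it does \emph{not} contribute to $\gamma$; the paper resolves this by showing that in that configuration every $L^-$-strip of $(C_0,T_0)$ below the $LU$-block satisfies $(rC)$, so the original $\tilde{L}^-$ must have been the last non-right-compatible strip, feeding back into the boundary case above. Finally, you explicitly leave the uniqueness clause (``all other $L^+$-strips in $(C_1,T_0)$ are left-compatible'') unproven, calling it the combinatorial heart of the lemma; in the paper this is settled inside the same case analysis by choosing the $L^+$ closest to $Q''$ (respectively $Q_1$) and reading the left-compatibility of all the remaining $L^+$-strips directly off the block picture, using that $(lT)$ holds automatically for $L^+$-strips (Remark \ref{automaticComp}). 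As it stands, the proposal establishes only the generic middle case and is silent or wrong precisely at the boundary strips where the cancellation is subtlest.
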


\begin{proof}
Without loss of generality, we may assume $(C_0,T_0)$ is either fundamental or regular. If not, we simply consider $(T,C)$ which is of the desired type. As before, denote the block tableaux of $(C_i,T_j)$ by $B_{i,j}$. 

We will consider the case of the very last non-right-compatible $L^-$-strip separately at the end of the proof. Let $\tilde{L}^-$ be not the very last one, and let $p'$ be its index. By Lemma \ref{automaticComp}, $\tilde{L}^-$ satisfies the condition $(rT)$, i.e. $T_1$ does not pose violations. Since $\tilde{L}^-$ is not right-compatible, the condition $(rC)$ must fail. In other words, $C_1$ must pose a right-compatibility violation.  Since $\tilde{L}^-$ is not the last $L^-$, it is followed by an $L$-block or a $U$-block (whichever one comes first). Let $p''$ be the index of the $LU$-block that follows $\tilde{L}^-$. 
\begin{center}
\begin{tikzpicture}

\node at (0.5*\X, \X*6) {$\scriptscriptstyle{B_{0,0}}$};
\draw (0,0) rectangle (\X*1,\X*1) node[fitting node] (00) {};
\node at (00.center) {$\scriptscriptstyle{LU}$};
\node at ($(00.center) + (-.5,0)$) {$\scriptscriptstyle p''$};
\draw (0, \X*1) rectangle (\X*1, \X*2) node[fitting node] (01) {};
\draw (0, \X*2) rectangle (\X*1, \X*3) node[fitting node] (02) {};
\draw (0, \X*3) rectangle (\X*1, \X* 4) node[fitting node] (03) {};
\node at (03.center) {$\scriptscriptstyle{\tilde{L}^-}$};
\node at ($(03.center) + (-.5,0)$) {$\scriptscriptstyle p'$}; 
\node at (02.center) {$\scriptscriptstyle{N^+}$};
\node at (01.center) {$\scriptscriptstyle{N^+}$};

\node at (1.5*\X, \X*6) {$\scriptscriptstyle{B_{1,0}}$};
\draw (\X*1,\X*1) rectangle (\X*2,\X*2) node[fitting node] (11) {};
\node at (11.center) {$\scriptscriptstyle{Q''}$};
\draw (\X*1,\X*2) rectangle (\X*2,\X*3) node[fitting node] (12) {};
\draw (\X*1,\X*3) rectangle (\X*2,\X*4) node[fitting node] (13) {};
\node at (13.center) {$\scriptscriptstyle{Q'}$};

\node at (\X*1+4*\D, \X*3) {\begin{minipage}{0.6\textwidth}
Denote the blocks in $(C_1,T_0)$ at indices $p'$ and $p''-1$ by $Q'$ and $Q''$ respectively. By Remark \ref{rmk:obs3}, $Q''$ is either $L^+$ or $N^-$. If $Q''=L^+$, we have a candidate. Suppose not, i.e. $Q''=N^-$. Then, by Remark \ref{rmk:obs1} it can only be preceded by $L^+$ or $N^-$.  If we don't allow any $L^+$ between $Q'$ and $Q''$, then Condition $(rC)$ is satisfied. Contradiction. Therefore, there must be at least one $L^+$-block between $Q'$ and $Q''$. \end{minipage}};
\end{tikzpicture}
\end{center}
If there is more than one $L^+$-block, let $\tilde{L}^+$ be the one with the largest index, i.e. closest to $Q''$. Then $\tilde{L}^+$ is not left-compatible due to the $LU$-block in $(C_0,T_0)$. Moreover, all other $L^+$-strips between $Q'$ and $Q''$ are left-compatible since $(lT)$ is satisfied (seen from picture) and $(lC)$ is always satisfied for $L^+$-strips.

It is not clear that the $\tilde{L}^+$ we found contributes to $\gamma$. Suppose it does not. There is exactly one situation where an $L$-block does not contribute to $\gamma$. This can happen only if $(C_1,T_0)$ is fundamental and $\tilde{L}^+$ is the last $L$-block followed by $N^-$'s. The boundary term of $\gamma$ is then $-1$, which cancels out the contribution of $\tilde{L}^+$. Then we have the following picture: 
\begin{center}
\begin{tikzpicture}

\node at (0.5*\X, \X*5) {$\scriptscriptstyle{B_{0,0}}$};
\draw (0,0) rectangle (\X*1,\X*1) node[fitting node] (00) {};
\node at (00.center) {$\scriptscriptstyle{LU}$};
\node at ($(00.center) + (-.5,0)$) {$\scriptscriptstyle p''$};

\draw (0, \X*1) rectangle (\X*1, \X*2) node[fitting node] (01) {};
\draw (0, \X*2) rectangle (\X*1, \X*3) node[fitting node] (02) {};
\draw (0, \X*3) rectangle (\X*1, \X* 4) node[fitting node] (03) {};
\node at (03.center) {$\scriptscriptstyle{\tilde{L}^-}$};
\node at ($(03.center) + (-.5,0)$) {$\scriptscriptstyle p'$};

\node at (02.center) {$\scriptscriptstyle{N^+}$};
\node at (01.center) {$\scriptscriptstyle{N^+}$};

\draw[white] (\X*1,-2*\X) rectangle (\X*2,-1*\X) node[fitting node] (1-2) {};
\node at (1.5*\X, \X*5) {$\scriptscriptstyle{B_{1,0}}$};
\draw (\X*1,\X*1) rectangle (\X*2,\X*2) node[fitting node] (11) {};
\node at (11.center) {$\scriptscriptstyle{\tilde{L}^+}$};
\draw (\X*1,\X*0) rectangle (\X*2,\X*1) node[fitting node] (10) {};
\node at (10.center) {$\scriptscriptstyle{N^-}$};
\draw (\X*1,-1*\X) rectangle (\X*2,\X*0) node[fitting node] (1-1) {};
\node at (1-1.center) {$\scriptscriptstyle{N^-}$};
\node at (1-2.center) {$\scriptscriptstyle{\vdots}$};

\draw (\X*1,\X*2) rectangle (\X*2,\X*3) node[fitting node] (12) {};
\draw (\X*1,\X*3) rectangle (\X*2,\X*4) node[fitting node] (13) {};

\node at (\X*1+4*\D, \X*3) {\begin{minipage}{0.6\textwidth}
All $L^-$-strips in $(C_0,T_0)$ appearing at any indices $p\geq p''$ satisfy Condition $(rC)$ since $B_{1,0}$ consists of $N^-$-blocks only for all $p \geq p''$, and, therefore, are right-compatible. This means $\tilde{L}^-$ is the very last non-right-compatible $L^-$-strip, which is a contradiction. This concludes the proof for this case.  
 \end{minipage}};
\end{tikzpicture}
\end{center}
Now let $\tilde{L}^-$  be the very last $L^-$-strip that contributes to $\gamma$ and is not right-compatible. There are exactly two situations: 
\begin{enumerate}
\item The situation described above, i.e. $\tilde{L}^-$ is followed by $LU$, but the corresponding $\tilde{L}^+$ in $(C_1,T_0)$ does not contribute to $\gamma$. This happens when $(C_1,T_0)$ is fundamental. 
\item $\tilde{L}^-$ is not followed by $LU$ and $(C_0,T_0)$ is regular. 
\end{enumerate}
In situation (1), the $\tilde{L}^+$ we found did not contribute to $\gamma$ and it is the last $L^+$ in $(C_1,T_0)$. Therefore, we must look for the appropriate $L^+$ elsewhere in $(C_1,T_0)$. In situation (2), since $\tilde{L}^-$ is the absolute last $L^-$-strip that is not followed by $LU$, it is followed by $N^+$-blocks only. This means that any $L^+$ in $(C_1,T_0)$ adjacent to the strip associated to $\tilde{L}^-$ is left-compatible. Since we are looking for a non-left-compatible $L^+$ to pair with $\tilde{L}^-$, we must also look elsewhere in $(C_1,T_0)$. 

We now consider both situations. Let $\hat{L}^-$ be the very first $L^-$-strip in $(C_0,T_0)$, and let $p'$ be its index. Denote the first block in $(C_1,T_0)$ by $Q_0$ and the block at index $p'-1$ in $(C_1,T_0)$ by $Q_1$. 
\begin{center}
\begin{tikzpicture}

\node at (0.5*\X, \X*6) {$\scriptscriptstyle{B_{0,0}}$};
\draw (0,0) rectangle (\X*1,\X*1) node[fitting node] (00) {};
\node at (00.center) {$\scriptscriptstyle{\hat{L}^-}$};
\node at ($(00.center) + (-.5,0)$) {$\scriptscriptstyle p'$};

\draw (0, \X*1) rectangle (\X*1, \X*2) node[fitting node] (01) {};
\draw (0, \X*2) rectangle (\X*1, \X*3) node[fitting node] (02) {};
\node at (01.center) {$\scriptscriptstyle{N^+}$};
\node at (02.center) {$\scriptscriptstyle{N^+}$};

\node at (1.5*\X, \X*6) {$\scriptscriptstyle{B_{1,0}}$};
\draw (\X*1,\X*1) rectangle (\X*2,\X*2) node[fitting node] (11) {};
\node at (11.center) {$\scriptscriptstyle{Q_1}$};
\draw (\X*1,\X*2) rectangle (\X*2,\X*3) node[fitting node] (12) {};
\draw (\X*1,\X*3) rectangle (\X*2,\X*4) node[fitting node] (13) {};
\node at (13.center) {$\scriptscriptstyle{Q_{0}}$};

\node at (\X*1+4*\D, \X*3) {\begin{minipage}{0.75\textwidth}
By Remark \ref{rmk:obs2}, the first block in $B_{1,0}$, i.e. $Q_{0}$, is not $N^-$. By Remark \ref{rmk:obs3}, $Q_1$ is either $L^+$ or $N^-$. The only way to transition from non $N^-$-block to $N^-$-block is through $L^+$. Therefore, there must be at least one $L^+$-block between $Q_{0}$ and $Q_1$. Let $\hat{L}^+$ be the $L^+$ closest to $Q_1$. Then the $\hat{L}^+$-strip is not left-compatible due to $\hat{L}^-$ in $(C_0,T_0)$ and contributes to $\gamma$ since it is not the last $L$-block in $(C_1,T_0)$. All other $L^+$ above $\hat{L}^+$, if they exist, are left-compatible as seen from the picture. Notice that it is possible to have $U$-blocks above $\hat{L}^-$. Then we replace $\hat{L}^-$ with the very first $U$ and argue as before.\end{minipage}};
\end{tikzpicture}
\end{center}
\end{proof}

Let us put everything together in Figure~\ref{fig:chain1} to emphasize the fact that we have unique pairing of non-right-compatible $L^-$-strips with non-left-compatible $L^+$-strips. 
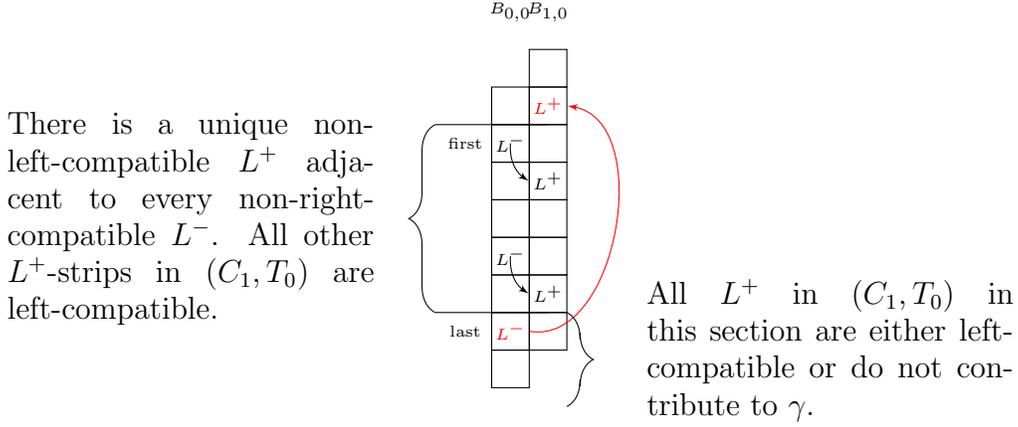
\begin{figure}[h]
\begin{tikzpicture}

\node at (0.5*\X, \X*6) {$\scriptscriptstyle{B_{0,0}}$};
\draw (0,0) rectangle (\X*1,\X*1) node[fitting node] (00) {};

\draw (0,-1*\X) rectangle (\X*1,0*\X) node[fitting node] (0-1) {};
\node at (0-1.center) {$\scriptscriptstyle{L^-}$};

\draw (0,-2*\X) rectangle (\X*1,-1*\X) node[fitting node] (0-2) {};
\draw (0,-3*\X) rectangle (\X*1,-2*\X) node[fitting node] (0-3) {};
\node at (0-3.center) {${\color{red}\scriptscriptstyle{L^-}}$};
\node at ($(0-3)-(0.6,0)$) {$\scriptscriptstyle{\text{last}}$};
\draw (0,-4*\X) rectangle (\X*1,-3*\X) node[fitting node] (0-4) {};
\draw (0, \X*1) rectangle (\X*1, \X*2) node[fitting node] (01) {};
\draw (0, \X*2) rectangle (\X*1, \X*3) node[fitting node] (02) {};
\node at (02.center) {$\scriptscriptstyle{L^-}$};
\node at ($(02)-(0.6,0)$) {$\scriptscriptstyle{\text{first}}$};

\draw (0, \X*3) rectangle (\X*1, \X* 4) node[fitting node] (03) {};

\node at (1.5*\X, \X*6) {$\scriptscriptstyle{B_{1,0}}$};
\draw (\X*1,\X*1) rectangle (\X*2,\X*2) node[fitting node] (11) {};
\draw (\X*1,\X*0) rectangle (\X*2,\X*1) node[fitting node] (10) {};
\draw (\X*1,-1*\X) rectangle (\X*2,\X*0) node[fitting node] (1-1) {};
\draw (\X*1,-2*\X) rectangle (\X*2,-1*\X) node[fitting node] (1-2) {};
\draw (\X*1,-3*\X) rectangle (\X*2,-2*\X) node[fitting node] (1-3) {};

\draw (\X*1,\X*2) rectangle (\X*2,\X*3) node[fitting node] (12) {};
\draw (\X*1,\X*3) rectangle (\X*2,\X*4) node[fitting node] (13) {};
\draw (\X*1,\X*4) rectangle (\X*2,\X*5) node[fitting node] (14) {};
\draw ($(0-3)+(-1,0.25)$) -- ($(1-3)+(0,0.25)$)
($(02)+(-1,0.25)$) -- ($(12)+(0,0.25)$)
;

\path[arrows={- latex'}]
    (02.center) edge[bend right] node [right] {} (11.west)
    (0-1.center) edge[bend right] node [right] {} (1-2.west)
    (0-3.east) edge[red,bend right=90] node [right] {} (13.east);

\node at (11.center) {$\scriptscriptstyle L^+$};
\node at (1-2.center) {$\scriptscriptstyle L^+$};
\node at (13.center) {$\color{red} \scriptscriptstyle L^+$};

\draw [decorate,decoration={brace,amplitude=10pt},xshift=-4pt,yshift=0pt]
($(0-3)+(-1,0.25)$) -- ($(02)+(-1,0.25)$) node [black,midway,xshift=-0.6cm] 
{};

\draw [decorate,decoration={brace,amplitude=10pt},xshift=-4pt,yshift=0pt]
($(1-3)+(0.25,0.25)$) -- ($(1-3)+(0.25,-1)$) node [black,midway,xshift=-0.6cm] 
{};

\node at (-4,0.25) {\begin{minipage}{0.3\textwidth} There is a unique non-left-compatible $L^+$ adjacent to every non-right-compatible $L^-$. All other $L^+$-strips in $(C_1,T_0)$ are left-compatible.  \end{minipage}};

\node at (4.5,-1.5) {\begin{minipage}{0.3\textwidth} All $L^+$ in $(C_1,T_0)$ in this section are either left-compatible or do not contribute to $\gamma$. \end{minipage}};

\end{tikzpicture}
\caption{Unique pairings of non-exchangeable $L^-$ and $L^+$. }
\label{fig:chain1}
\end{figure}

\begin{cor}\label{chain1cor}
Let $C=(C_{-1},C_0)$ and $T=(T_{-1},T_0)$ be a pair of KR-tableaux. To every non-left-compatible $L^+$-strip in $(C_0,T_0)$ that contributes to $\gamma$, there exists a unique non-right-compatible $L^-$-strip in $(C_{-1},T_{0})$ that contributes to $\gamma$. All other $L^-$-strips in $(C_{-1},T_0)$ are right-compatible. 
\end{cor}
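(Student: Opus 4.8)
The plan is to deduce Corollary \ref{chain1cor} from Lemma \ref{chain1} by reading the pairing constructed there in the opposite direction, after matching up the two columns of $C$. Identify the corollary's data with the lemma's by declaring the lemma's left column $C_0$ to be the corollary's $C_{-1}$ and the lemma's right column $C_1$ to be the corollary's $C_0$; the single column $T_0$ plays the same role in both statements. The key point that makes this matching clean is Remark \ref{automaticComp}: the right-compatibility of an $L^-$-strip depends only on the $C$-column to its right, and the left-compatibility of an $L^+$-strip depends only on the $C$-column to its left, so the ``extra'' $T$-column ($T_1$ in the lemma, $T_{-1}$ here) is irrelevant to the compatibility analysis and its placement on the opposite side causes no difficulty. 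Under this dictionary the lemma's non-right-compatible $L^-$-strips in $(C_0,T_0)$ are the corollary's non-right-compatible $L^-$-strips in $(C_{-1},T_0)$, and the lemma's non-left-compatible $L^+$-strips in $(C_1,T_0)$ are the corollary's non-left-compatible $L^+$-strips in $(C_0,T_0)$; notice that no interchange of $L^+$ and $L^-$ is needed, since the left/right roles of the two $C$-columns are preserved.

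First I would record that the construction inside the proof of Lemma \ref{chain1} is not just an existence statement but an explicit, injective assignment $\Phi$: to each non-right-compatible $L^-$-strip in $(C_{-1},T_0)$ that contributes to $\gamma$ it produces the non-left-compatible $L^+$-strip in $(C_0,T_0)$ lying immediately adjacent to the $LU$-block terminating that $L^-$-strip (or, in the boundary case, the $L^+$-strip closest to the block just above the first $L^-$-strip). Because distinct non-right-compatible $L^-$-strips are separated by $LU$-blocks and are matched with $L^+$-strips in the corresponding disjoint index ranges, as displayed in Figure \ref{fig:chain1}, the assignment $\Phi$ is injective.

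Next I would invoke the remaining clause of Lemma \ref{chain1}, that \emph{all other} $L^+$-strips in $(C_0,T_0)$ are left-compatible. This says exactly that every non-left-compatible $L^+$-strip in $(C_0,T_0)$ contributing to $\gamma$ equals $\Phi(\tilde L^-)$ for some non-right-compatible contributing $L^-$-strip $\tilde L^-$, i.e. $\Phi$ surjects onto the set of such $L^+$-strips. Hence $\Phi$ is a bijection between the non-right-compatible contributing $L^-$-strips in $(C_{-1},T_0)$ and the non-left-compatible contributing $L^+$-strips in $(C_0,T_0)$, and the corollary is precisely the assertion that $\Phi^{-1}$ exists: each non-left-compatible contributing $L^+$-strip has a unique preimage, which is the sought non-right-compatible contributing $L^-$-strip, while every $L^-$-strip outside the image of $\Phi^{-1}$ is right-compatible because $\Phi$ was defined on \emph{all} non-right-compatible contributing $L^-$-strips.

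The only genuine (and modest) obstacle is confirming that the lemma's assignment is a bijection rather than merely a well-defined surjection, so that the reverse reading is unambiguous; injectivity must be extracted from the explicit construction via the disjointness of the index ranges delimited by consecutive $LU$-blocks, and one must check that the two exceptional cases treated in the lemma — the very last $L^-$-strip, and the fundamental-versus-regular dichotomy — transport consistently under the reversal. Since the proof of Lemma \ref{chain1} already settles each of these cases, no new combinatorial argument is required, and the corollary follows formally.
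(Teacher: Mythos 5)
Your proposal is correct and takes essentially the same route as the paper: its proof of Corollary~\ref{chain1cor} likewise observes that left- and right-compatibility are the same underlying inequalities viewed from opposite reference points (your appeal to Remark~\ref{automaticComp} that an $L^-$-strip's right-compatibility depends only on the $C$-column to its right, and an $L^+$-strip's left-compatibility only on the $C$-column to its left) and simply reads the pairing of Lemma~\ref{chain1} in reverse, assigning to each non-left-compatible $L^+$-strip in $(C_0,T_0)$ its partner $L^-$-strip in $(C_{-1},T_0)$. Your explicit check that the lemma's assignment is a bijection — injectivity from the disjoint index ranges delimited by the $LU$-blocks as in Figure~\ref{fig:chain1}, surjectivity from the lemma's final clause — merely makes precise what the paper's two-sentence proof leaves implicit.
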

\begin{proof}
Left-compatibility and right-compatibility are, in fact, identical requirements with different points of references. This is evident in the underlying inequalities being the same (see Definition \ref{defn:compat}). We simply start with non-left-compatible $L^+$-strips in $(C_0,T_0)$ and assign the $L^-$-blocks in $(C_{-1},T_{0})$ that pair with the $L^+$-blocks by Lemma \ref{chain1}. 
\end{proof}

Notice that in the proof of Lemma \ref{chain1}, when considering the last $L^-$-strip with $(C_0,T_0)$ regular (case two), we did not use the fact that $L^-$ is not right-compatible. Moreover, this situation includes the case when $L^-$ is not column-compatible.  
\begin{cor}\label{chain2}
Suppose $(C_0,T_0)$ is regular and there is a non-column-compatible $L^-$-strip. If there is a column in $C$ to the right of $C_0$, call it $C_1$, then there is a unique non-left-compatible $L^+$ in $(C_1,T_0)$. \qed
\end{cor}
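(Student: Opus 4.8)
The plan is to recognize that a non-column-compatible $L^-$-strip is forced to be the \emph{last} $L$-strip of $(C_0,T_0)$, which places the situation squarely inside the second case treated at the end of the proof of Lemma~\ref{chain1}; the conclusion then follows by reusing that portion of the argument, essentially verbatim.

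First I would invoke Remark~\ref{cross-pattern}: a column-compatible $L$-strip must begin and end with a cross-pattern, so every $L$-strip other than the final one is automatically column-compatible. Consequently the given non-column-compatible $L^-$-strip is the very last $L$-strip in $(C_0,T_0)$; in particular it is followed by $N$-blocks only, and never by an $LU$-block. Combined with the hypothesis that $(C_0,T_0)$ is regular, this is exactly situation~(2) in the proof of Lemma~\ref{chain1} (``$\tilde L^-$ is not followed by $LU$ and $(C_0,T_0)$ is regular''), as opposed to situation~(1), which requires the strip to be followed by an $LU$-block. The key point---already flagged in the remark preceding the statement---is that the argument handling situation~(2) never uses right-compatibility of the strip; it is driven only by the strip being the last $L^-$-strip and by the regularity of $(C_0,T_0)$, both of which hold here.

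Next I would transcribe that argument. Let $\hat L^-$ be the first $L^-$-strip of $(C_0,T_0)$ with index $p'$, and let $Q_0$ and $Q_1$ denote the blocks of $(C_1,T_0)$ at its first index and at index $p'-1$, respectively. By Lemma~\ref{rmk:obs2} the first block $Q_0$ is not $N^-$, while by Remark~\ref{rmk:obs3} the block $Q_1$ lying just above the head of $\hat L^-$ is either $L^+$ or $N^-$. Since any transition from a non-$N^-$ block to an $N^-$ block must pass through an $L^+$-block (Remark~\ref{rmk:obs1}), there is at least one $L^+$-block between $Q_0$ and $Q_1$; taking $\hat L^+$ to be the one of largest index, the presence of $\hat L^-$ in $(C_0,T_0)$ makes $\hat L^+$ fail left-compatibility, whereas every $L^+$-strip above it stays left-compatible. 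This produces the asserted unique non-left-compatible $L^+$-strip in $(C_1,T_0)$. I expect the only genuine obstacle to be the reduction itself---confirming that non-column-compatibility lands us in situation~(2) rather than situation~(1), and that right-compatibility is truly never invoked in that branch of Lemma~\ref{chain1}; once these are checked, the remaining steps are a direct replay of the already-established construction.
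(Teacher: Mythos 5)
Your proposal is correct and follows exactly the route the paper takes: the paper gives Corollary~\ref{chain2} no separate proof, justifying it solely by the preceding remark that case two of the proof of Lemma~\ref{chain1} (the last $L^-$-strip, $(C_0,T_0)$ regular, not followed by an $LU$-block) never invokes right-incompatibility and subsumes the non-column-compatible case. Your reduction via Remark~\ref{cross-pattern} and your replay of the $Q_0$, $Q_1$, $\hat{L}^+$ argument match the paper's reasoning, with only the minor caveat that in a full transcription you would also keep the paper's remark that $U$-blocks above $\hat{L}^-$ are handled by replacing $\hat{L}^-$ with the first such $U$.
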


\begin{lemma}\label{chain3}
Suppose $(C_0,T_0)$ is regular and there is a non-column-compatible $L^-$. If there is a column in $T$ to the left of $T_0$, call it $T_{-1}$, then $L^-$ is never \emph{left}-compatible.
\end{lemma}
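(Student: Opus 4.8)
The plan is to reduce left-compatibility to a single condition and then force a violation of that condition at the boundary of the strip. By Remark \ref{automaticComp}, every $L^-$-strip automatically satisfies the condition $(lC)$ of Definition \ref{defn:compat}; hence the non-column-compatible $L^-$-strip in $(C_0,T_0)$ is left-compatible if and only if it satisfies $(lT)$. Unwinding $(lT)$, this means that every block of $(C_0,T_{-1})$ occurring along the exchanged indices must be an $N^-$-block, i.e. $T_{-1}[p]\le C_0[p-1]$ throughout the strip. So it suffices to exhibit a single index in the relevant range at which this $N^-$ requirement is forced to fail.

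First I would pin down the shape of the strip. Since $(C_0,T_0)$ is regular, write $\supp(C_0)\cap\supp(T_0)=\{h,\ldots,t\}$ with $t=\tail(T_0)<\tail(C_0)$. Because the $L^-$-strip is non-column-compatible, Remark \ref{cross-pattern} tells us it is not terminated by an $LU$-block. By Lemma \ref{rmk:obs2}(c) the block of $(C_0,T_0)$ at index $t+1$ is neither $L^-$ nor $N^-$, while a terminating $L^+$ or $U$ there is excluded precisely by non-column-compatibility (such a cross-pattern would make the strip column-compatible). The only remaining possibility is an $N^+$-block, so the strip descends through $N^+$-blocks all the way to the bottom of the overlap, and the exchanged indices reach the tail row.

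Next I would bring in the left neighbour $T_{-1}$. The column $T_{-1}$ sits one step to the left of $T_0$ in the staircase of $T$, so $\supp(T_{-1})$ is the one-step shift of $\supp(T_0)$; combined with the regularity inequality $\head(C_0)>\head(T_0)$, this pins down exactly how $T_{-1}$ is aligned against $C_0$. I would then compose the weakly-increasing-diagonal relation $T_{-1}[p]\le T_0[p-1]$ (valid because $T$ is a KR-tableau) with the strictly increasing column arrows of $C_0$ and with the $\infty$-padding that appears past $\tail(T_0)$, exactly in the style of the composition arguments used in Remark \ref{automaticComp} and Lemma \ref{rmk:obs2}. The upshot I would aim for is that at the boundary of the strip the box of $(C_0,T_{-1})$ that would have to satisfy $T_{-1}[p]\le C_0[p-1]$ is forced out of range (one side becomes an $\infty$- or $0$-entry), so the block there cannot be $N^-$; this single failure of $(lT)$ shows the $L^-$-strip is never left-compatible.

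The main obstacle I expect is the boundary bookkeeping. Concretely, I must verify that the forced non-$N^-$ block occurs at an index lying in the exchanged range $\{p^*+1,\ldots,t+1\}$, rather than merely at the unconstrained head of the overlap, and I must treat on an equal footing the two alignments of $(C_0,T_{-1})$ permitted by regularity: the case $\head(C_0)>\head(T_{-1})$, in which $(C_0,T_{-1})$ is again regular and Lemma \ref{rmk:obs2}(a) forbids $N^-$ at its head, and the degenerate case $\head(C_0)=\head(T_{-1})$, in which the obstruction must instead be read off at the tail using the $\infty$-padding past $\tail(T_0)$. Confirming that in each alignment one of these forced violations always lands inside the strip is the crux; once it is in place, the remainder is an arrow-composition argument identical in spirit to the proof of Lemma \ref{chain1}.
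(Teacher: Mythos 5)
Your overall skeleton is the paper's: Remark \ref{automaticComp} reduces left-compatibility of an $L^-$-strip to condition $(lT)$, non-column-compatibility forces the strip to descend through $N^+$-blocks to the very last block of $B_{0,0}$, and the violation of $(lT)$ is then produced at the tail by $\infty$-padding. But there is a genuine gap, and it sits exactly where you flag the crux. First, an off-by-one: the last block of the strip is at index $t+1$ (the block-tableau of $(C_0,T_0)$ has support $\{h,\ldots,t+1\}$, and the terminal $N^+$ there involves the real box $C_0[t+1]$, which exists by regularity), so $(lT)$ requires $(C_0,T_{-1})$ to carry $N^-$-blocks at indices $p^*+1,\ldots,t+2$ --- one below each strip block --- not at $\{p^*+1,\ldots,t+1\}$ as you write. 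The index $t+2$ your range omits is precisely where the paper's violation occurs: in the staircase, $T_{-1}$ is the one-step-\emph{down} shift of $T_0$, so $\tail(T_{-1})=t+1$ and $T_{-1}[t+2]=\infty$, while regularity of $(C_0,T_0)$ gives $\tail(C_0)\geq t+1$, hence $C_0[t+1]<\infty$; therefore the required inequality $T_{-1}[t+2]\leq C_0[t+1]$ fails, and $(lT)$ is violated at $t+2$ with no case analysis whatsoever.

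Second, the branch you propose for the alignment $\head(C_0)>\head(T_{-1})$ cannot be completed: Lemma \ref{rmk:obs2}(a) forbids $N^-$ only at the head $h$ of the overlap of $(C_0,T_{-1})$, but in that alignment $h=\head(C_0)=\head(C_0,T_0)$, and every block of the strip has index $p^*\geq\head(C_0,T_0)$, so the required $N^-$-range $\{p^*+1,\ldots,t+2\}$ begins strictly below $h$; the head obstruction never lands inside the range and so proves nothing about $(lT)$. (Also, $(C_0,T_{-1})$ need not be regular there, since $\tail(C_0)=\tail(T_{-1})$ is possible.) The tail argument above works uniformly in both of your alignments, which is why the paper's proof is a single picture: the red arrow $C_0[t+1]\rightarrow T_{-1}[t+2]=\infty$, marked ``not $N^-$''. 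So your plan is salvageable, but only by discarding the head-based case split, fixing the range to end at $t+2$, and running the tail obstruction unconditionally.
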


\begin{proof}
We want to show that Condition $(lT)$ is not satisfied. We have the following picture:

\begin{center}
\begin{tikzpicture}

\node at (1.5*\X, \X*6) {$\scriptstyle{C_0}$};
\draw (\X*1,-1*\X) rectangle (\X*2,\X*0) node[fitting node] (1-1) {};
\draw (\X*1,\X*0) rectangle (\X*2,\X*1) node[fitting node] (10) {};
\draw (\X*1,\X*1) rectangle (\X*2,\X*2) node[fitting node] (11) {};
\draw (\X*1,\X*2) rectangle (\X*2,\X*3) node[fitting node] (12) {};
\draw (\X*1,\X*3) rectangle (\X*2,\X*4) node[fitting node] (13) {};

\node at (0.5*\X+\D, \X*6) {$\scriptstyle{T_{-1}}$};
\draw[white] (0+\D,-1*\X) rectangle (\X*1+\D,\X*0) node[fitting node] (t0-1) {};
\draw (0+\D,0) rectangle (\X*1+\D,\X*1) node[fitting node] (t00) {};
\draw (0+\D, \X*1) rectangle (\X*1+\D, \X*2) node[fitting node] (t01) {};
\draw (0+\D, \X*2) rectangle (\X*1+\D, \X*3) node[fitting node] (t02) {};
\node at (t0-1.center) {$\scriptstyle \color{red} \infty$};

\node at (1.5*\X+\D, \X*6) {$\scriptstyle{T_0}$};
\draw[white] (\X*1+\D,\X*0) rectangle (\X*2+\D,\X*1) node[fitting node] (t10) {};
\draw (\X*1+\D,\X*1) rectangle (\X*2+\D,\X*2) node[fitting node] (t11) {};
\draw (\X*1+\D,\X*2) rectangle (\X*2+\D,\X*3) node[fitting node] (t12) {};
\draw (\X*1+\D,\X*3) rectangle (\X*2+\D,\X*4) node[fitting node] (t13) {};
\draw[dotted] (\X*1+\D,\X*4) rectangle (\X*2+\D,\X*5) node[fitting node] (t14) {};
\node at (t10.center) {$\scriptstyle \infty$};

\draw[arrows={- latex'},blue] (t14.center) -- (13.center);
\draw[arrows={- latex'},blue] (13.center) -- (t13.center);
\draw[arrows={- latex'},dotted,blue] (12.center) -- (t13.center);
\draw[arrows={- latex'},dotted,blue] (11.center) -- (t12.center);
\draw[arrows={- latex'},dotted,blue] (10.center) -- (t11.center);
\draw[arrows={- latex'},dotted,blue] (1-1.center) -- (t10.west);
\draw[arrows={- latex'},red] (10.center) -- (t0-1.west);
\node at ($(t0-1.center)+(1,0)$) {$\scriptscriptstyle\color{red} \text{not } N^-$};

\node at (0 + 2*\D, \X*6) {$\scriptscriptstyle{B_{0,-1}}$};
\draw (0+2*\D,-1*\X) rectangle (\X*1+2*\D,\X*0) node[fitting node] (g0-1) {};
\draw (0+2*\D,0) rectangle (\X*1+2*\D,\X*1) node[fitting node] (g00) {};
\draw (0+2*\D, \X*1) rectangle (\X*1+2*\D, \X*2) node[fitting node] (g01) {};
\draw (0+2*\D, \X*2) rectangle (\X*1+2*\D, \X*3) node[fitting node] (g02) {};

\node at (g0-1.center) {$\color{red} \scriptstyle{X}$};

\node at (2*\X+2*\D, \X*6) {$\scriptscriptstyle{B_{0,0}}$};
\draw (\X*1+2*\D,\X*0) rectangle (\X*2+2*\D,\X*1) node[fitting node] (g10) {};
\draw (\X*1+2*\D,\X*1) rectangle (\X*2+2*\D,\X*2) node[fitting node] (g11) {};
\draw (\X*1+2*\D,\X*2) rectangle (\X*2+2*\D,\X*3) node[fitting node] (g12) {};
\draw (\X*1+2*\D,\X*3) rectangle (\X*2+2*\D,\X*4) node[fitting node] (g13) {};
%\draw (\X*1+5*\D,\X*4) rectangle (\X*2+5*\D,\X*5) node[fitting node] (g14) {};

\node at (g10.center) {$\scriptstyle{N^+}$};
\node at (g11.center) {$\scriptstyle{N^+}$};
\node at (g12.center) {$\scriptstyle{N^+}$};
\node at (g13.center) {$\scriptstyle{L^-}$};

%\node at ($(g00)+(0.25,-1)$) {$\scriptstyle{(LT)}$};

\end{tikzpicture}
\end{center}
\end{proof}

\begin{lemma} \label{fundClustShape}
Let $C=(C_i)$ and $T=(T_j)$ be KR-tableaux from $\mathcal{C}$. Suppose $(C_i,T_j)$ is regular. Then at least one of $C_{i+1}$ or $T_{j-1}$ must exist. In other words, either there is a column to the right of $C_i$ or to the left of $T_j$ or both. 
\end{lemma}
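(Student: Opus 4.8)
The plan is to prove the contrapositive. Saying that $C_{i+1}$ does not exist means $C_i$ is the \emph{rightmost} column of $C$, and saying $T_{j-1}$ does not exist means $T_j$ is the \emph{leftmost} column of $T$; so it suffices to show that the rightmost column of $C$ and the leftmost column of $T$ can never form a regular pair. Since regularity depends only on the shapes (heads and tails) of the two columns and not on their decorations, the entire statement reduces to one inequality computation with the fundamental-cluster shape data.

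First I would record the extremal column positions. Write $C$ as a staircase diagram of shape $(a,j(m,a),m)$ and $T$ as one of shape $(b,j(n,b),n)$, where by \eqref{eq:FundClust} we have $j(k,i)=-k+(i+k+1)_2$. Within a staircase of shape $(i,j,k)$ the $l$-th column has shape $(i,\,j+2(l-1))$, so moving one step to the right shifts a column \emph{up} by one index; together with $\head=\tfrac{1-i-j}{2}$ and $\tail=\tfrac{i-j-1}{2}$ this gives, for the rightmost column of $C$ ($l=m$) and the leftmost column of $T$ ($l=1$),
\begin{align*}
\head(C_i)&=\tfrac{3-a-m-(a+m+1)_2}{2}, & \tail(C_i)&=\tfrac{1+a-m-(a+m+1)_2}{2},\\
\head(T_j)&=\tfrac{1-b+n-(b+n+1)_2}{2}, & \tail(T_j)&=\tfrac{-1+b+n-(b+n+1)_2}{2}.
\end{align*}
Now assume toward a contradiction that $(C_i,T_j)$ is regular, i.e. $\head(C_i)>\head(T_j)$ and $\tail(C_i)>\tail(T_j)$. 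Clearing denominators and \emph{adding} the two inequalities cancels the dependence on $a,b$ apart from parity and leaves
\[
2-m-n \;>\; (a+m+1)_2-(b+n+1)_2 \;\ge\; -1 .
\]
As every quantity is an integer this forces $m+n\le 2$, and since $C$ and $T$ each have at least one column we have $m,n\ge 1$, hence $m=n=1$.

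It remains to rule out $m=n=1$. In that case $C=C_i$ and $T=T_j$ are both single columns, i.e. fundamental modules of $\mathcal{C}$, and by the discussion following Definition~\ref{def:pairs} every pair of fundamental modules in $\mathcal{C}$ is fundamental or anti-fundamental, so $(C_i,T_j)$ is not regular — the desired contradiction. (Directly, plugging $m=n=1$ into the two inequalities forces simultaneously $a\le b$ and $b\le a$ together with $a$ even and $b$ odd, which is impossible.) The only delicate point in the whole argument is the bookkeeping: one must keep straight that the rightmost column of a staircase sits highest (smallest index) and carry the parity corrections $(i+k+1)_2$ correctly through to the extremal columns. Once the four positions above are written down correctly, adding the two regularity inequalities and using $\lvert (a+m+1)_2-(b+n+1)_2\rvert\le 1$ does all the work.
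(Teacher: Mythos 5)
Your proof is correct, but it takes a genuinely different route from the paper's. The paper's argument is structural and very short: every member of $\mathcal{C}$ is built by adding columns alternately around its central ($k=1$) column, and the central columns of any two members of $\mathcal{C}$ are nested, i.e.\ form an (anti-)fundamental pair; since columns rise monotonically to the right within a staircase, if $C_i$ were rightmost in $C$ and $T_j$ leftmost in $T$, then $\head(\text{center of }C)\geq \head(C_i)>\head(T_j)\geq \head(\text{center of }T)$ and likewise for tails, making the central columns a regular pair --- contradicting nestedness. You instead work entirely from the explicit shape data $j(k,i)=-k+(i+k+1)_2$ of \eqref{eq:FundClust}: you write down the four extremal head/tail positions, add the two regularity inequalities so that $a,b$ cancel up to the parity terms, deduce $m+n\leq 2$ from $\lvert(a+m+1)_2-(b+n+1)_2\rvert\leq 1$, and eliminate $m=n=1$ by the parity clash (or by citing the remark after Definition~\ref{def:pairs}). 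Your computations check out, including the correct use of $\tail = \head + \length - 1$ (note the displayed support formula in the paper's definition has an off-by-one upper endpoint, $\frac{1+i-j}{2}$ instead of $\frac{i-j-1}{2}$, which the paper's own examples disambiguate and which you navigated correctly). What each approach buys: the paper's proof is two lines and exposes the conceptual reason (nesting of the fundamental columns, which is exactly the alternating construction of Section~\ref{fundClusterDiagrams}); yours is self-contained arithmetic needing only the definition of $\mathcal{C}$, and as a byproduct it quantifies the obstruction --- a regular pair of extremal columns would force $m+n\leq 2$, i.e.\ both tableaux to be single columns, which the parity of the cluster then forbids.
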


\begin{proof}
Since the central columns of $C$ and $T$ form a fundamental pair (see Section \ref{fundClusterDiagrams}), the central column of $T$ must be to the left of $T_j$ or the central column of $C$ must be to the right of $C_i$. 
\end{proof}

When looking for $L^+$ to cancel out the contributions of non-exchangeable $L^-$-blocks, we need only address left-compatible $L^-$-blocks. The reason is that non-left-compatible $L^-$-blocks would have already been paired with a non-right-compatible $L^+$-blocks previously. Lemma \ref{chain1} addresses most of the situations when we have a non-exchangeable $L^-$ that contributes to $\gamma$. Lemma \ref{fundClustShape} shows that there are exactly two other situations when $L^-$ contributes to $\gamma$ and needs a pair. Lemmas \ref{chain2} and \ref{chain3} address each of those situations respectively. This concludes the unique pairing when we have a non-exchangeable $L^-$ that contributes to $\gamma$. 

Next, we address the case when we have a non-exchangeable $L^-$ that does not contribute to $\gamma$. We must ensure that there is no corresponding $L^+$, which would create an imbalance. There are two possibilities as listed in Table \ref{tab:cancellations}.

\begin{lemma}\label{chain4}
Suppose $(C_0,T_0)$ is fundamental. Then every $L^-$-strip in $(C_0,T_0)$ is column-compatible, i.e. contributes to $\gamma$.
\end{lemma}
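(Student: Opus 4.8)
The plan is to reduce the claim, via Remark~\ref{cross-pattern}, to a single boundary block and then to eliminate the only dangerous configuration using the order restrictions already recorded in Remark~\ref{rmk:obs1}. Recall that an $L$-strip is column-compatible as soon as it is followed by an $LU$-block. Since the $N$-blocks of a strip are absorbed into it and the only non-$N$ block types are $L^\pm$ and $U$, the block immediately after an $L^-$-strip, whenever it sits at an index $\le t$, is an $L$- or $U$-block, hence an $LU$-block, so that strip is column-compatible. Consequently the only $L^-$-strip that could fail is one whose $N$-blocks run all the way down to the tail index $t:=\tail(C_0,T_0)$ with no intervening $L$- or $U$-block, and the whole problem collapses to analyzing such a strip at the boundary index $t+1$.

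Because $(C_0,T_0)$ is fundamental, Definition~\ref{def:pairs} gives $\supp(C_0)\subseteq\supp(T_0)$, so $t=\tail(C_0)$ and $C_0[t+1]=\infty$. I would then split into two cases. If $\tail(C_0)=\tail(T_0)$, then $T_0[t+1]=\infty$ as well, and exchanging the strip through index $t$ only requires $C_0[t]<T_0[t+1]=\infty$ and $T_0[t]<C_0[t+1]=\infty$, both vacuous, so the strip is column-compatible. If $\tail(C_0)<\tail(T_0)$, then $T_0[t+1]$ is finite, and since $C_0[t+1]=\infty$ the block at index $t+1$ can be neither $U$ nor $L^-$ nor $N^+$ (compare Lemma~\ref{rmk:obs2}); it is therefore forced to be either $L^+$, meaning $C_0[t]<T_0[t+1]$, or $N^-$, meaning $T_0[t+1]\le C_0[t]$.

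The crux is to exclude the $N^-$ alternative in this second case. The block of the strip sitting at index $t$ is either the $L^-$-block itself (if the strip carries no $N$-blocks) or an $N^+$-block, because by Remark~\ref{rmk:obs1} an $L^-$-block is followed only by $N^+$-blocks and $N^+$-blocks persist. By that same remark, neither $L^-$ nor $N^+$ may be immediately followed by an $N^-$-block; hence the block at $t+1$ must be $L^+$, i.e.\ $C_0[t]<T_0[t+1]$. This is exactly the inequality that keeps the exchanged $T_0$-column strictly increasing below the strip, while the exchanged $C_0$-column is automatic from $C_0[t+1]=\infty$, so the last strip is column-compatible. I expect this boundary step---ruling out $N^-$ at $t+1$---to be the only genuine obstacle; everything else is the bookkeeping of Remark~\ref{cross-pattern}. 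The parenthetical ``contributes to $\gamma$'' is then immediate: by the boundary-term formula of Theorem~\ref{gamma} (cf.\ the corollary to Lemma~\ref{gammaContr}), a column-compatible $L^-$-strip is never cancelled by the boundary term, so its $-1$ survives.
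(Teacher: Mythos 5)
Your proof is correct and takes essentially the same route as the paper's: both reduce, via Remark~\ref{cross-pattern}, to the last $L^-$-strip and then analyze the tail block at index $t+1$ of the fundamental pair, where the tail restrictions of Lemma~\ref{rmk:obs2} together with the ordering rules of Remark~\ref{rmk:obs1} leave no room for the $N$-type continuation that a failure of column-compatibility would require. The paper compresses this into a one-line contradiction (the block at $t+1$ cannot be $N^+$, so the boundary term never cancels an $L^-$), whereas you equivalently force the tail block to be $L^+$ and verify the exchange inequalities explicitly --- a fuller write-up of the same argument.
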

\begin{proof}
By Remark \ref{rmk:obs2}(b), the last block in $B_{0,0}$ is not $N^+$, and, therefore, the boundary term of $\gamma(C_0,T_0)$ is zero. 
\end{proof}

\begin{remark} \label{rmk:obs4}
Let $C=(C_0,C_1)$ and $T=(T_0)$ be KR-tableaux. If $(C_0,T_0)$ has an $L^+UN^-$-block at $p$, then $(C_1,T_0)$ has an $N^-$-block at $p$ (see Figure~\ref{fig:obs4}).  
\begin{figure}[h]
\begin{center}
\begin{tikzpicture}

\node at (0,0) {
\begin{tikzpicture}
\node at (0.5*\X, \X*5) {$\scriptstyle{C_0}$};

\draw (0, \X*2) rectangle (\X*1, \X*3) node[fitting node] (02) {};
\draw (0, \X*3) rectangle (\X*1, \X* 4) node[fitting node] (03) {};
\node at ($(02) +(-.6,0)$) {$\scriptstyle p$};
\node at ($(03) +(-.6,0)$) {$\scriptstyle p-1$};

\node at (\X*1.5, \X*5) {$\scriptstyle{C_1}$};
\draw (\X*1,\X*2) rectangle (\X*2,\X*3) node[fitting node] (12) {};
\draw (\X*1,\X*3) rectangle (\X*2,\X*4) node[fitting node] (13) {};

\node at (0.5*\X+\D, \X*5) {$\scriptstyle{T_0}$};
\draw (0+\D, \X*2) rectangle (\X*1+\D, \X*3) node[fitting node] (t02) {};
\draw (0+\D, \X*3) rectangle (\X*1+\D, \X* 4) node[fitting node] (t03) {};
\node at ($(t02) +(1.5,0)$) {$\scriptscriptstyle N^-\text{ in }(C_0,T_0)$};

\draw[arrows={ - latex'},dotted,blue] (02.center) -- (13.center);
\draw[arrows={ - latex'},blue] (03.center) -- (02.center);
\draw[arrows={ - latex'},blue,dotted] (t02.center) -- (03.center);
\draw[arrows={ - latex'},red,thick,dotted] (t02.center) -- (13.center);

\end{tikzpicture}
};

\node at (6,0) {
\begin{tikzpicture}
\node at (0.5*\X, \X*5) {$\scriptstyle{C_0}$};

\draw (0, \X*2) rectangle (\X*1, \X*3) node[fitting node] (02) {};
\draw (0, \X*3) rectangle (\X*1, \X* 4) node[fitting node] (03) {};
\node at ($(02) +(-.6,0)$) {$\scriptstyle p$};
\node at ($(03) +(-.6,0)$) {$\scriptstyle p-1$};

\node at (\X*1.5, \X*5) {$\scriptstyle{C_1}$};
\draw (\X*1,\X*2) rectangle (\X*2,\X*3) node[fitting node] (12) {};
\draw (\X*1,\X*3) rectangle (\X*2,\X*4) node[fitting node] (13) {};

\node at (0.5*\X+\D, \X*5) {$\scriptstyle{T_0}$};
\draw (0+\D, \X*2) rectangle (\X*1+\D, \X*3) node[fitting node] (t02) {};
\draw (0+\D, \X*3) rectangle (\X*1+\D, \X* 4) node[fitting node] (t03) {};
\node at ($(t02) +(1.5,0)$) {$\scriptscriptstyle L^+U\text{ in }(C_0,T_0)$};

\draw[arrows={ - latex'},dotted,blue] (02.center) -- (13.center);
\draw[arrows={ - latex'},blue,dotted] (t02.center) -- (02.center);
\draw[arrows={ - latex'},red,thick,dotted] (t02.center) -- (13.center);

\end{tikzpicture}
};
\end{tikzpicture}
\end{center}
\caption{Composition of arrows.}
\label{fig:obs4}
\end{figure}
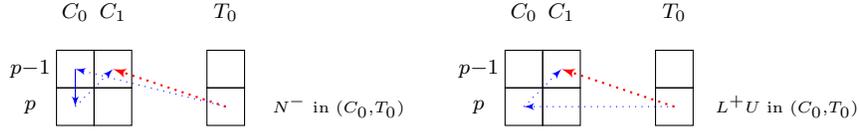
Indeed, by composing arrows, we find $T_0[p]\leq C_1[p-1]$, which means there is an $N^-$-block at $p$ in $(C_1,T_0)$.  
\end{remark}

\begin{lemma}\label{chain5}
Suppose $(C_0,T_0)$ is anti-fundamental and there is a non-right-compatible and non-column-compatible $L^-$-strip. Then there does not exist a corresponding non-exchangeable $L^+$-strip in $(C_1,T_0)$.  
\end{lemma}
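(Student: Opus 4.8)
The plan is to run the same block-comparison machinery used in Lemma \ref{chain1}, but to show that the mechanism which normally manufactures a paired $L^+$-strip fails precisely because the offending $L^-$-strip is the last one and is absorbed by the boundary term.

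First I would pin down the shape of the strip. As in the proof of Lemma \ref{gammaContr}, a non-column-compatible $L^-$-strip in $(C_0,T_0)$ must be the last $L$-strip: it starts with an $L^-$-block at some index $p^*$ and, not being followed by an $LU$-block (Remark \ref{cross-pattern}), is followed only by $N^+$-blocks down to the tail. Writing $t=\tail(C_0,T_0)=\tail(T_0)$, the anti-fundamental hypothesis forces $\tail(C_0)>t$, so the block at $t+1$ in $(C_0,T_0)$ is again $N^+$; hence $N_{t+1}(C_0,T_0)=+1$ cancels the $-1$ from the $L^-$-block and this strip does not contribute to $\gamma$, as recorded in Table \ref{tab:cancellations}. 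The upshot is that throughout $[p^*+1,t+1]$ the pair $(C_0,T_0)$ consists entirely of $N^+$-blocks, the \emph{only} non-$N^+$ block in $[p^*,t+1]$ being the $L^-$-block at $p^*$.

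Next I would translate left-compatibility into a statement about $(C_0,T_0)$. Since $T$ has no column to the left of $T_0$, condition $(lT)$ of Definition \ref{defn:compat} is vacuous, and by Remark \ref{automaticComp} an $L^+$-strip automatically satisfies $(rC)$ and $(lT)$; so an $L^+$-strip of $(C_1,T_0)$ supported on $[a,b]$ is left-compatible exactly when $(lC)$ holds, which after the diagonal shift reads: $(C_0,T_0)$ has $N^+$-blocks at every index of $[a+1,b+1]$. By Corollary \ref{chain1cor} any non-left-compatible $L^+$-strip of $(C_1,T_0)$ has its violation witnessed by a non-$N^+$-block of $(C_0,T_0)$ in $[a+1,b+1]$; by the previous paragraph, for such a strip to be the one \emph{corresponding} to the strip at hand, that witness can only be the $L^-$-block at $p^*$. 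Hence a corresponding non-left-compatible $L^+$-strip would have to straddle the transition, with $a\le p^*-1\le b$, its $L^+$-block sitting at some index $\le p^*-1$.

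The heart of the argument, and the step I expect to be the main obstacle, is ruling out such a strip at this top transition index $p^*-1$. Here I would invoke Remark \ref{rmk:obs4} on the block of $(C_0,T_0)$ at $p^*-1$: if it is of type $L^+$, $U$, or $N^-$, then $(C_1,T_0)$ carries an $N^-$-block at $p^*-1$, which cannot belong to an $L^+$-strip, so no corresponding $L^+$ exists. The two remaining possibilities are handled by hand. If $(C_0,T_0)$ has an $N^+$-block at $p^*-1$, its $N^+$-region extends above $p^*$ and any $L^+$-strip of $(C_1,T_0)$ forced to lie inside it is left-compatible by the criterion of the previous paragraph; if $(C_0,T_0)$ has an $L^-$-block at $p^*-1$, then (as the block at $p^*$ begins a new $L^-$-strip) this is an earlier, necessarily column-compatible $L^-$-strip, and the offending $L^+$ is paired with \emph{it} through Lemma \ref{chain1} rather than with the non-contributing strip at $p^*$. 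In every case the strip at $p^*$ acquires no corresponding non-exchangeable $L^+$-strip, which is the assertion of the lemma. The genuinely delicate point is the bookkeeping at the single index $p^*-1$ where the $N^+$-region abuts the $L^-$-block, namely confirming that the pairing of Lemma \ref{chain1} never assigns an $L^+$-block there to the non-contributing strip.
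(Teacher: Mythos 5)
Your opening two paragraphs track the paper's proof correctly: the non-column-compatible $L^-$-strip is the last $L$-strip, it is followed only by $N^+$-blocks so that $N_{t+1}(C_0,T_0)=+1$ cancels its contribution, and left-compatibility of an $L^+$-strip in $(C_1,T_0)$ reduces (via Remark~\ref{automaticComp}) to the condition $(lC)$, i.e.\ to $N^+$-blocks of $(C_0,T_0)$ on the shifted range. But the step you yourself flag as the heart of the argument fails. You assert that an $N^-$-block of $(C_1,T_0)$ at $p^*-1$ ``cannot belong to an $L^+$-strip.'' By definition an $L$-strip is an $L$-block \emph{together with all the $N$-blocks that follow}, and by Remark~\ref{rmk:obs1}(a) the $N$-blocks following an $L^+$-block are precisely $N^-$-blocks; a trailing run of $N^-$'s is exactly how an $L^+$-strip reaches downward. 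Indeed this is the very mechanism used in the proof of Lemma~\ref{chain1}: when the block $Q''$ there is $N^-$, one deduces an $L^+$-block strictly \emph{above} it whose strip extends down through the $N^-$-run. So your purely local analysis at the single index $p^*-1$ cannot exclude a non-left-compatible $L^+$-strip whose head sits higher up and whose $(lC)$-range contains the violating index $p^*$; ruling that out requires a statement about the entire region above.

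That global statement is what the paper proves, and it is where anti-fundamentality enters at the \emph{head} of the pair --- a constraint your proposal never invokes (you use anti-fundamentality only at the tail, for the boundary term). Let $\hat{L}^-$ be the very first $L^-$-strip of $(C_0,T_0)$. Since $(C_0,T_0)$ is anti-fundamental, Lemma~\ref{rmk:obs2}(a) applied to the fundamental pair $(T_0,C_0)$ shows the first block of $B_{0,0}$ is not $N^+$; it is not $L^-$ either, so it is $L^+$, $N^-$ or $U$, and by Remark~\ref{rmk:obs1} no $N^+$-block occurs anywhere above $\hat{L}^-$. Remark~\ref{rmk:obs4} then forces \emph{every} block of $(C_1,T_0)$ above the index of $\hat{L}^-$ to be $N^-$, so there is no $L^+$-block at all in that region, and the search that in Lemma~\ref{chain1} manufactures the partner of the last $L^-$-strip comes up empty. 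Combined with your correct observation that all $L^+$-strips at indices at or below $p^*$ are left-compatible (since $(C_0,T_0)$ consists of $N^+$-blocks there), this closes the argument. As written, your case analysis at $p^*-1$ --- in particular the $L^+$, $U$, $N^-$ case --- has a genuine gap at precisely the point you identified as delicate.
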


\begin{proof}
Let $\tilde{L}^-$ be the $L^-$-strip in question and let $p'$ be its index. Since $L^-$ is not column-compatible, it must be the last $L$-strip in $(C_0,T_0)$ and the last block in $(C_0,T_0)$ must be $N^+$. In other words, the boundary term of $\gamma(C_0,T_0)$ is $+1$, which cancels out the contribution of $\tilde{L}^-$. Since the contribution of $\tilde{L}^-$ is already canceled out, we want to show that there is no non-exchangeable $L^+$ in $(C_1,T_0)$ that is paired with $\tilde{L}^-$. It suffices to show that all possible $L^+$'s that can be paired with $\tilde{L}^-$ are, in fact, left-compatible. Notice that there must be $C_1$ in order for $\tilde{L}^-$ to fail the condition $(rC)$.  
\begin{center}
\begin{tikzpicture}

\node at (0.5*\X, \X*5) {$\scriptscriptstyle{B_{0,0}}$};
\draw (0,0) rectangle (\X*1,\X*1) node[fitting node] (00) {};
\node at (00.center) {$\scriptscriptstyle{N_+}$};

\draw (0, \X*1) rectangle (\X*1, \X*2) node[fitting node] (01) {};
\node at (01.center) {$\scriptscriptstyle{N^+}$};
\draw (0, \X*2) rectangle (\X*1, \X*3) node[fitting node] (02) {};
\node at (02.center) {$\scriptscriptstyle{N^+}$};
\draw (0, \X*3) rectangle (\X*1, \X* 4) node[fitting node] (03) {};
\node at (03.center) {$\scriptscriptstyle{\tilde{L}^-}$};
\node at ($(03.center)+(-.5,0)$) {$\scriptscriptstyle p'$};

\node at (1.5*\X, \X*5) {$\scriptscriptstyle{B_{1,0}}$};
\draw (\X*1,\X*1) rectangle (\X*2,\X*2) node[fitting node] (11) {};
\node at (11.center) {$\scriptscriptstyle{Q_2}$};
\draw (\X*1,\X*2) rectangle (\X*2,\X*3) node[fitting node] (12) {};
\draw (\X*1,\X*3) rectangle (\X*2,\X*4) node[fitting node] (13) {};
\node at (13.center) {$\scriptscriptstyle{Q_1}$};

\node at (\X*1+4*\D, \X*3) {\begin{minipage}{0.6\textwidth}
Let $Q_1$ and $Q_2$ be the index $p'$ and the last block in $(C_1,T_0)$. If there are any $L^+$-blocks between $Q_1$ and $Q_2$, then they are all left-compatible. Indeed, $(lC)$ is satisfied (seen from the picture) and $(lT)$ is always satisfied for $L^+$-strips. \end{minipage}};
\end{tikzpicture}
\end{center}

Let $\hat{L}^-$ be the very first $L^-$-strip in $(C_0,T_0)$. 

\begin{center}
\begin{tikzpicture}

\node at (0.5*\X, \X*4) {$\scriptscriptstyle{B_{0,0}}$};
\draw[white] (0,-2*\X) rectangle (\X*1,-1*\X) node[fitting node] (0-2) {};
\node at (0-2.center) {$\scriptscriptstyle{\vdots}$};
\draw[white] (\X*1,-1*\X) rectangle (\X*2,\X*0) node[fitting node] (1-1) {};
\node at (1-1.center) {$\scriptscriptstyle{\vdots}$};

\draw (0,0) rectangle (\X*1,\X*1) node[fitting node] (00) {};
\node at (00.center) {$\scriptscriptstyle{\hat{L}^-}$};

\draw (0,-1*\X) rectangle (\X*1,0*\X) node[fitting node] (0-1) {};
\node at (0-1.center) {$\scriptscriptstyle{N^+}$};

\draw (0, \X*1) rectangle (\X*1, \X*2) node[fitting node] (01) {};

\node at (1.5*\X, \X*4) {$\scriptscriptstyle{B_{1,0}}$};

\draw (\X*1,\X*1) rectangle (\X*2,\X*2) node[fitting node] (11) {};
\node at (10.center) {$\scriptscriptstyle{Q}$};
\draw (\X*1,\X*0) rectangle (\X*2,\X*1) node[fitting node] (10) {};

\draw (\X*1,\X*2) rectangle (\X*2,\X*3) node[fitting node] (12) {};

\node at (\X*1+4*\D, \X*1) {\begin{minipage}{0.6\textwidth}
Since $(C_0,T_0)$ is anti-fundamental, by Remark \ref{rmk:obs2}, the first block in $B_{0,0}$ is not $N^+$. Since $\hat{L}^-$ is the first $L^-$-strip in $(C_0,T_0)$, the first block in $B_{0,0}$ is not an $L^-$-block either. So, it can be either $N^-, U$ or $L^+$. By Remark \ref{rmk:obs1}, since $N^+$ cannot follow an $L^+N^-U$-block, there are no $N^+$-blocks above $\hat{L}^-$.  By Remark \ref{rmk:obs4}, the adjacent blocks in $(C_1,T_0)$, i.e. the blocks above $Q$, are all $N^-$. In other words, there are no $L^+$-blocks above $Q$. This concludes the proof. 
\end{minipage}};
\end{tikzpicture}
\end{center}
\end{proof}

We are now ready to put everything together. 

\begin{proof}[Proof of Theorem \ref{thm:main1}]
If there are no $L$-blocks anywhere in $(C,T)$, the statement is trivially true. Let's assume there is at least one $L$-block in $(C,T)$, and without loss of generality, we may assume it is $L^-$. Otherwise we consider $(T,C)$ instead. Moreover, we can assume the $L^-$ is left-compatible. Indeed, if there is a non-left-compatible $L^-$ in $(C_i,T_j)$, there is a non-left-compatible $L^+$ in $(T_j,C_i)$. By Corollary \ref{chain1cor}, there is a non-right-compatible $L^-$ in $(T_{j-1},C_{i})$. If this $L^-$ is again non-left-compatible, we go through the same chain of arguments. We continue inductively and eventually, since the process must end when $C$ or $T$ runs out of columns, we are guaranteed to find a left-compatible $L^-$.  

Consider a left-compatible $L^-$-strip in $(C_i,T_0)$. If there are no such $L^-$-strips in $(C,T_0)$, which happens if there are no $L^-$-blocks in $(C,T_0)$, we remove $T_0$ from $T$ and consider $(C,T_1)$. The list of possibilities for $L^-$ are listed in Table \ref{tab:cancellations}.
\begin{enumerate}
\item Suppose the $L^-$-block in question contributes to $\gamma$. If it is non-right-compatible, there must be $C_{i+1}$. This is because all $L^-$-strips satisfy the condition $(rT)$. In order for the $L^-$ to be non-right-compatible, there must be $C_{i+1}$ that pose violations. By Lemma \ref{chain1}, there exists a unique non-left-compatible $L^+$ in $(C_{i+1},T_j)$. If $(C_i,T_0)$ is regular and there exists $C_{i+1}$, then Corollary \ref{chain1cor} is used. If there is no $C_{i+1}$, by Lemma \ref{fundClustShape} there must be $T_{-1}$ and by Lemma \ref{chain3}, the $L^-$ is not left-compatible, which is a contradiction. 
\item Suppose the $L^-$-block does not contribute to $\gamma$. Then $(C_i,T_0)$ is (anti-)fundamental. If $(C_i,T_0)$ is fundamental, then every $L^-$ contributes to $\gamma$ and we apply the previous analysis. If $(C_i,T_0)$ is anti-fundamental and $L^-$ is right-compatible, there is no need to pair it with anything since $L^-$ does not contribute to $\gamma$ and pose no violations with $C_{i+1}$. If $(C_i,T_0)$ is anti-fundamental and $L^-$ is not right-compatible, Lemma \ref{chain4} shows there is no corresponding non-left-compatible $L^+$'s. 
\end{enumerate}

We now remove $T_0$ from $T$ and consider $\gamma(C,T_1)$. Since $T_0$ is removed, all non-left-compatible $L^-$-strips become left-compatible. However, restrictions on $L^+$-strips are not changed since left-compatibility comes from $C$ itself. By the exact same argument, all negative non-exchangeable contributions in $\gamma(C,T_1)$ are canceled by positive non-exchangeable contributions. We continue this argument for all $T_i$. This proves all the negative non-exchangeable terms in $\gamma(C,T)$ are uniquely canceled out by non-right-compatible terms in $\gamma(C,T)$. 

Next, we consider $(T,C)$, where all left-compatible $L^+$-strips in $(C,T)$ become left-compatible $L^-$-strips in $(T,C)$ and apply the same argument. 
\end{proof}

\section{Proof of Conditions II and III}
% Condition II and II
The previous section showed that the condition I holds. That is, the $(q,t)$-characters in the fundamental cluster $t$-commute with one another. Then, by Corollary \ref{gammaComm}, the commutation matrix $\Lambda$ of the fundamental cluster $\mathcal{C}$ is given by:
\begin{eqnarray*}
\Lambda_{i,k}^{i',k'} = 2\epsilon( \mathbf{Y}_{k,-k+(i+k+1)_2}^{(i)}, \mathbf{Y}_{k',-k'+(i'+k'+1)_2}^{(i')} )\,\, ,
\end{eqnarray*}
where $\mathbf{Y}_{k,j(i,k)}^{(i)}$ and $\mathbf{Y}_{k',j(i',k')}^{(i')}$ are dominant monomials of KR-modules in the fundamental cluster $\mathcal{C}$.  

We now show condition II holds. 

\begin{thm} \label{main2}
Let $B$ be the infinite matrix associated to the quiver $\Gamma_B$ in Figure~\ref{fig:Tsystem} (on page~\pageref{fig:Tsystem} ). Then, 
\begin{equation*}
\Lambda B = D\,,
\end{equation*}
where $D$ is a diagonal matrix with positive entries. In other words, $(\Lambda, B)$ is a compatible pair (as in \cite{berenstein2005quantum}, Section $3$). 
\end{thm}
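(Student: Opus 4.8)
The plan is to make the commutation matrix $\Lambda$ completely explicit and then verify $\Lambda B = D$ by a local computation of matrix entries. By Corollary \ref{gammaComm} we have $\Lambda_{(i,k)}^{(i',k')} = 2\epsilon(\mathbf{Y}^{(i)}, \mathbf{Y}^{(i')})$, where the two dominant monomials are the ones attached to the fundamental cluster vertices, with $j$-coordinates $j = -k + (i+k+1)_2$. Since the $u$-vector of a KR dominant monomial $\mathbf{Y}_{k,j}^{(i)} = Y_{i,j}Y_{i,j+2}\cdots Y_{i,j+2k-2}$ is simply the indicator of the arithmetic progression $\{j, j+2, \dots, j+2k-2\}$ in row $i$, the first step is to unwind Definition \ref{def:Nakajima} to the closed form
\[
\epsilon(\mathbf{Y}_{k,j}^{(i)}, \mathbf{Y}_{k',j'}^{(i')}) = \sum_{s'=0}^{k'-1}\tilde{u}_{i',\,j'+2s'-1}(\mathbf{Y}_{k,j}^{(i)}) - \sum_{s=0}^{k-1}\tilde{u}_{i,\,j+2s-1}(\mathbf{Y}_{k',j'}^{(i')}),
\]
which exhibits $\Lambda$ entirely in terms of the auxiliary quantities $\tilde{u}$ of Definition \ref{uinverse}.

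The second, and technically central, step is to solve the linear system of Definition \ref{uinverse} for a KR dominant monomial. That system is a discrete d'Alembert (wave) equation $\tilde{u}_{i,q+1} = u_{i,q} + \tilde{u}_{i-1,q} + \tilde{u}_{i+1,q} - \tilde{u}_{i,q-1}$ evolving in the coordinate $q$, subject to the Dirichlet conditions $\tilde{u}_{0,q} = \tilde{u}_{r+1,q} = 0$ coming from $i\notin I$ and the initial condition $\tilde{u} \equiv 0$ for $q \ll 0$. Marching forward from the source produces an expanding, reflecting wave, from which I would read off a piecewise-linear closed formula for $\tilde{u}_{i,q}(\mathbf{Y}_{k',j'}^{(i')})$; substituting into the display above gives an explicit integer-valued formula for every entry of $\Lambda$, and in particular re-proves that $\Lambda$ is integral.

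With $\Lambda$ in hand, the third step is the matrix product. The matrix $B$ is local: the column indexed by a vertex $w = (i,k)$ is supported on its four octahedron-neighbors $(i,k\pm 1)$ and $(i\pm1,k)$, with signs dictated by the orientation of $\Gamma_B$ (equivalently by the two monomial terms on the right of the $t$-deformed $T$-system \eqref{eq:tTsystem}). Hence $(\Lambda B)_v^{w}$ is a signed sum of four explicit entries of $\Lambda$. Using bilinearity of $\epsilon$ I would push this signed neighbor-combination onto the $u$- and $\tilde{u}$-data of the second argument: the signed $u$-combination is elementary, while the signed $\tilde{u}$-combination collapses by the defining relation of Definition \ref{uinverse} (which expresses $u$ as a discrete Laplacian of $\tilde{u}$) together with the $T$-system identity $u(\mathbf{Y}_{k+1,j}^{(i)}) + u(\mathbf{Y}_{k-1,j+2}^{(i)}) = u(\mathbf{Y}_{k,j}^{(i)}) + u(\mathbf{Y}_{k,j+2}^{(i)})$ for the $u$-vectors. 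Invariance under translation in the $l$-coordinate together with the parity dichotomy $(i+k+1)_2$ reduces the verification to a bounded list of local configurations: I expect the off-diagonal signed sums to telescope to $0$ (the supports of $m_v^1$ and the collapsed local term being disjoint once $v\neq w$), and the diagonal sums to evaluate to a fixed positive integer, whence $\Lambda B = D$ is diagonal with positive entries and $(\Lambda, B)$ is compatible.

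The hard part will be the second step together with the cancellation in the third. Because $\tilde{u}$ has genuinely infinite support and reflects off the Dynkin boundaries $i \in \{0, r+1\}$, keeping both the closed formula and the subsequent sign and parity bookkeeping under control is delicate; this is also precisely the place where the asymmetry between the two $T$-system cluster structures enters, so the same computation must (and, by the counterexample of Chapter 5, will) fail to produce a diagonal $D$ when the mutation direction is taken in the $k$-parameter rather than the $l$-parameter.
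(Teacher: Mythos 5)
Your plan is sound in its essentials and, at its core, it is the same mechanism the paper uses: the decisive point in both arguments is that the $B$-weighted signed combination of the four neighboring cluster monomials reconstructs the defining operator of Definition \ref{uinverse}, so that the inverse operation hidden inside $\tilde{u}$ cancels and each entry of $\Lambda B$ collapses to a pairing of finitely supported $u$-data. Where you diverge is in \emph{how} you propose to get there. The paper never solves the system of Definition \ref{uinverse} (its own remark stresses this): it encodes $u(\mathbf{Y}_{k,-k+(i+k+1)_2}^{(i)})(s)=e_i\otimes s^{-1+(i+k+1)_2}[k]_s$ as a generating series, writes the system as $u=K\tilde{u}$ with $K=(1\otimes s^{-1})(1\otimes 1+A\otimes s+1\otimes s^2)$, takes the formal inverse $D=K^{-1}$ expanded at $s=0$, proves $\epsilon(p,p')=(1\otimes s-1\otimes s^{-1})Du(p)\cdot u(p')$ with respect to a symmetric inner product under which $K$ and $D$ are self-adjoint, and then uses the key identity $u^{i,k-1}+u^{i,k+1}-u^{i-1,k}-u^{i+1,k}=(1\otimes s^{-1+(i+k)_2})\,K\,e_i\otimes[k]_s$ so that $DK=1$ eliminates all infinite data at once; the Dynkin boundary is absorbed into the matrix $A$, so no reflection analysis is ever needed. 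Your second step --- marching the discrete wave equation forward with Dirichlet reflections to obtain a closed formula for $\tilde{u}$ and hence all of $\Lambda$ --- is therefore an avoidable detour, and a riskier one than you suggest: in type $A_r$ the solution has genuinely infinite, quasi-periodic support (governed by the expansion of Green's-function denominators of the path graph, essentially $1/[r+1]_s$), not a tame piecewise-linear profile, and all of that data is exactly what the $DK=1$ cancellation renders irrelevant.

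One further imprecision to fix if you carry out your third step: the off-diagonal entries of $\Lambda B$ do \emph{not} vanish because the supports of the two finite terms are disjoint. After the collapse one is computing the constant term of $(s^{-1}-s)\,s^{\delta}\,[k]_s[k']_s$ with $\delta=(i+k')_2-(i+k+1)_2\in\{-1,0,1\}$; for $i=i'$ and, say, $k'=k+2$ the supports overlap substantially, and the vanishing comes from a pairwise parity cancellation inside the constant-term extraction (this is precisely the paper's case analysis in $\delta$, carried out with the geometric expansions of $1/(1-s^{-2})$). Your telescoping intuition is correct in outcome, but the bookkeeping must be done as a signed constant-term computation, not a support-disjointness argument. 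With that correction, and with your step 2 replaced (or at least shortcut) by the formal-inverse/self-adjointness argument, your outline matches the paper's proof.
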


We will provide some definitions and lemmas first. 

\begin{defn} \label{notationCond2}
We fix the following notations for convenience:
\begin{enumerate}
\item Given $k\in \mathbb{Z}$, the $s$-number is defined as:
\begin{eqnarray*}
\left[0\right]_s&:=&0\,,\\
\left[k\right]_s &:=& \frac{ s^k - s^{-k}}{s-s^{-1}} = s^{k-1} + s^{k-3} + \cdots + s^{-k+3} + s^{-k+1}\,.
\end{eqnarray*}
\item Denote $\mathcal{F} := \mathbb{Z}[[s]][s^{-1}]$. Then, multiplication operator is well-defined in $\mathcal{F}$.
\item Given $f(s)\in \mathcal{F}$, we define $\left[f\right]_0$ to be the constant term in $f(s)$, e.g. $[s^{-1}+3+s]_0=3$. 
\end{enumerate}
\end{defn}

\begin{defn} \label{uGenFun}
Let $m$ be a monomial in $\mathbb{Z}[Y_{i,j}^{\pm 1}]$. We define the following generating series:
\begin{eqnarray*}
u_{i}(m)(s) := \sum_{j\in \mathbb{Z}} u_{i,j}(m) s^j &;& u(m)(s) := \sum_{i=1}^r e_i \otimes u_{i}(m)(s)\,\, ,
\end{eqnarray*}
where $u_{i,j}(m)$ is the exponent of $Y_{i,j}$ in $m$ as defined in Definition \ref{uv} and $e_i \in \mathbb{Z}^r$ is the vector with $1$ in the $i$th position and $0$'s everywhere else. 
\end{defn}

\begin{lemma} \label{sNum}
Let $\mathbf{Y}_{k,-k+(i+k+1)_2}^{(i)}$ be the dominant monomial of $\chi_{k,-k+(i+k+1)_2}^{(i)}\in \mathcal{C}$. Then,
\begin{eqnarray*}
u(\mathbf{Y}_{k,-k+(i+k+1)_2}^{(i)})(s)&=& e_i \otimes s^{-1+(i+k+1)_2}\,\,[k]_s\,.
\end{eqnarray*}
\end{lemma}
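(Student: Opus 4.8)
The plan is to compute the generating series $u(\mathbf{Y}_{k,-k+(i+k+1)_2}^{(i)})(s)$ directly from the definition of the dominant monomial in \eqref{KRdom}. Recall that for the KR-module with dominant monomial $\mathbf{Y}_{k,j}^{(i)} = Y_{i,j}Y_{i,j+2}\cdots Y_{i,j+2(k-1)}$, only the $i$th component is nontrivial, so $u(\mathbf{Y}_{k,j}^{(i)})(s) = e_i \otimes u_i(\mathbf{Y}_{k,j}^{(i)})(s)$. By Definition \ref{uv}, $u_{i,j'}(\mathbf{Y}_{k,j}^{(i)})$ counts the exponent of $Y_{i,j'}$, which equals $1$ precisely when $j' \in \{j, j+2, j+4, \ldots, j+2(k-1)\}$ and is $0$ otherwise. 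Hence
\begin{equation*}
u_i(\mathbf{Y}_{k,j}^{(i)})(s) = \sum_{l=0}^{k-1} s^{j+2l} = s^j \sum_{l=0}^{k-1} s^{2l}.
\end{equation*}

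First I would substitute the specific value $j = -k+(i+k+1)_2$ coming from the fundamental cluster \eqref{eq:FundClust}. This gives $u_i = s^{-k+(i+k+1)_2} \sum_{l=0}^{k-1} s^{2l}$. The key step is then to recognize this geometric-type sum as an $s$-number. Writing out $[k]_s = s^{k-1} + s^{k-3} + \cdots + s^{-k+1}$ from Definition \ref{notationCond2}, I would observe that multiplying by $s^{-k+1}$ shifts the exponents to $s^{-k+1}\cdot s^{k-1} = s^0, s^{-k+1}\cdot s^{k-3} = s^{-2}, \ldots$, i.e. $s^{-k+1}[k]_s = \sum_{l=0}^{k-1} s^{-2l}$. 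To match orientation, it is cleaner to note $s^{k-1}[k]_s = \sum_{l=0}^{k-1} s^{2l}$, so that $\sum_{l=0}^{k-1} s^{2l} = s^{k-1}[k]_s$. Therefore
\begin{equation*}
u_i = s^{-k+(i+k+1)_2} \cdot s^{k-1}[k]_s = s^{-1+(i+k+1)_2}[k]_s,
\end{equation*}
which is exactly the claimed expression.

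The argument is essentially a bookkeeping verification, so there is no serious conceptual obstacle; the only point requiring care is keeping the exponent arithmetic consistent between the two conventions (the descending-exponent form of $[k]_s$ versus the ascending sum arising from the product expansion of $\mathbf{Y}_{k,j}^{(i)}$). I would double-check the identity $\sum_{l=0}^{k-1} s^{2l} = s^{k-1}[k]_s$ by comparing it against the closed form $[k]_s = (s^k - s^{-k})/(s - s^{-1})$, which yields $s^{k-1}[k]_s = (s^{2k-1} - s^{-1})/(s-s^{-1}) = (s^{2k}-1)/(s^2-1)$, matching the geometric sum $\sum_{l=0}^{k-1}s^{2l}$. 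The boundary case $k=0$ is handled separately and trivially, since $[0]_s = 0$ by convention and the empty product gives $u_i = 0$, consistent with the convention $\chi_{k,j}^{(i)}=1$ when $k=0$. This completes the verification of the lemma.
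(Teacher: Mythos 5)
Your proof is correct and follows essentially the same route as the paper: a direct expansion of the dominant monomial into the sum $\sum_{l=0}^{k-1} s^{j+2l}$, followed by recognizing it as a shifted $s$-number (the paper factors out $s^{j-1}$ with $j=(i+k+1)_2$ directly, while you factor $s^{k-1}$ out of the geometric sum --- the same arithmetic in slightly different bookkeeping). Your closed-form cross-check of $\sum_{l=0}^{k-1}s^{2l}=s^{k-1}[k]_s$ and the $k=0$ remark are sound additions; note also that you correctly used $k$ factors $Y_{i,j}\cdots Y_{i,j+2(k-1)}$, consistent with the paper's own computation in its proof, despite the typo $2(k-2)$ in \eqref{KRdom}.
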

\begin{proof}
Denote $j:= (i+k+1)_2$. Modules in $\mathcal{C}$ have dominant monomials of the form:
\begin{eqnarray} \label{Yexp}
\mathbf{Y}_{k,j}^{(i)} = Y_{i, -k+j} Y_{i, -k+j+2} \cdots Y_{i, -k+ j + 2(k-1)}\,.
\end{eqnarray}
Then, using Definition \ref{uGenFun}, we directly compute as follows: 
\begin{eqnarray*}
u_i (\mathbf{Y}_{k,j}^{(i)})(s) &=& \sum_{p\in \mathbb{Z}} u_{i,p}(\mathbf{Y}_{k,j}^{(i)})s^p \\
&=&s^{-k+j} + s^{-k+j+2} + \cdots +s^{j+k-2} \\
&=& s^{j-1} \left(s^{-k+1} + s^{-k+3} + \cdots  + s^{k-1}\right)\\
&=&s^{j-1} [k]_s\,,
\end{eqnarray*}
where $u_{i,p}\left( \mathbf{Y}_{k,j}^{(i)}\right)$ is the power of $Y_{i,p}$ in $\mathbf{Y}_{k,j}^{(i)}$, which is either $1$ or $0$ as seen from \eqref{Yexp}. Notice that $u_{i',p}(\mathbf{Y}_{k,j}^{(i)})=0$ if $i' \neq i$. Therefore, $u_{i'}(\mathbf{Y}_{k,j}^{(i)})(s)=0$ and we have:
\begin{eqnarray*}
u(\mathbf{Y}_{k,j}^{(i)}) = e_i \otimes s^{j-1}[k]_s\,.
\end{eqnarray*}
\end{proof}

\begin{defn} \label{utildeGenFun}
Let $m$ be a monomial in $\mathbb{Z}[Y_{i,j}^{\pm 1}]$. We define the following generating series:
\begin{eqnarray*}
\tilde{u}_i(m)(s) = \sum_{j\in \mathbb{Z}} \tilde{u}_{i,j}(m) s^j &\text{and}& \tilde{u}(m)(s) = \sum_{i=1}^r e_i \otimes \tilde{u}_i(m)(s)\,,
\end{eqnarray*}
where $\tilde{u}_{i,j}(m)$ are the solutions of the system given in Definition \ref{uinverse}. 
\end{defn}

\begin{defn}\label{action}
Let $M\in {\rm{Mat}}_{r\times r}(\mathbb{Z})$ and $g \in \mathcal{F}$. We define an action of $M\otimes g$ on the space $\mathbb{Z}^r \times \mathcal{F}$ as follows:
\begin{eqnarray*}
\left(M\otimes g\right)\left(v\otimes f \right) = \left(Mv\right)\otimes \left(gf\right) &\text{ for any }& v\in \mathbb{Z}^r, f \in  \mathcal{F},
\end{eqnarray*}
where $Mv$ is the matrix multiplication and $gf$ is the usual multiplication. 
\end{defn}

Recall the system of equations in Definition \ref{uinverse}:
\begin{equation*} 
u_{i,j}(m) = \tilde{u}_{i,j-1}(m) + \tilde{u}_{i,j+1}(m) - \tilde{u}_{i-1,j}(m) - \tilde{u}_{i+1,j}(m)\,,
\end{equation*}
defined for any monomial $m$ and any $i\in I, j\in \mathbb{Z}$. Let $A = C-2I$, where $C$ is the Cartan matrix of $\mathfrak{sl}_{r+1}$. We rewrite this system for $m=\mathbf{Y}_{k,-k+(i+k+1)_2}^{(i)}$ as follows:
\begin{equation}
u^{i,k}(s) = (1\otimes s + 1\otimes s^{-1} + A\otimes 1) \tilde{u}^{i,k}(s)\,,
\end{equation}
where the action is as in Definition \ref{action}.
\begin{defn}
Denote the operator
\begin{equation} \label{K}
K = (1\otimes s^{-1}) ( 1\otimes 1 + A\otimes s + 1\otimes s^{2})\,.
\end{equation}
Define operator $D$ as a formal power series in $s$, expanded at $0$, given by:
\begin{eqnarray} \label{D}
D=(1\otimes 1 +  A\otimes s + 1\otimes s^2 )^{-1}(1\otimes s)\,.
\end{eqnarray}
Then we have $DK=KD=1\otimes 1$ and $Du^{i,k}(s) = \tilde{u}^{i,k}(s)$.
\end{defn}
\begin{remark}
In Definition \ref{uinverse}, we require $\tilde{u}_{i,j}(m)=0$ for $j$ sufficiently small. This condition is equivalent to expanding the formal inverse of the power series \eqref{K} at $0$, which is the choice we made in \eqref{D}.  
\end{remark}

\begin{remark}
It is easy to see that $K$ commutes with $1\otimes s^n$ for any $n$. Since $D$ is the inverse of $K$, we have:
\begin{eqnarray*}
D(1\otimes s^n) = D(1\otimes s^n)KD = DK(1\otimes s^n) D = (1\otimes s^n) D\,.
\end{eqnarray*}
That is, the operator $D$ commutes with $1\otimes s^n$ for $n\in \mathbb{Z}$. 
\end{remark}

\begin{defn} \label{myInner}
Let $v,w\in \mathbb{Z}^r$ and $f,g\in \mathcal{F}$. We define the following inner product on $\mathbb{R}^r\otimes \mathcal{F}$ as follows:
\begin{equation}
\left(v\otimes f \right) \cdot \left(w\otimes g \right) = \left< v,w\right>  \left[ f\left(s^{-1}\right)g\left(s\right) \right]_{0}\,,
\end{equation}
where $\left<v,w\right>$ is the usual inner product on $\mathbb{R}^r$ and $\left[f(s)\right]_{0}$ is the constant term in $f(s)$ as defined in Definition \ref{notationCond2}. 
\end{defn}
\begin{remark}
Notice that the inner product in Definition \ref{myInner} is symmetric. That is, 
\begin{equation*}
(v\otimes f) \cdot (w\otimes g) = (w\otimes g) \cdot (v\otimes f)\,. 
\end{equation*}
\end{remark}

\begin{defn}
Given $M\otimes h \in {\rm{Mat}}_{r\times r}(\mathbb{Z}) \times \mathcal{F}$, we define the \emph{transpose} of $M\otimes h$, denoted $(M\otimes h)^t$, by the following condition:
\begin{equation*}
\left(v\otimes f \right) \cdot M\otimes h \left(w\otimes g\right) = (M\otimes h)^t\left(v\otimes f\right) \cdot \left(w\otimes g\right)\,, 
\end{equation*}  
for any $v,w\in \mathbb{Z}^r$ and $f,g\in \mathcal{F}$. An operator $M\otimes h$ is \emph{symmetric} if $M\otimes h=(M\otimes h)^t$. 
\end{defn}
\begin{lemma} \label{transpose}
\begin{eqnarray*}
(1\otimes s)^t = 1\otimes s^{-1} ;&K^t = K; & D^t = D\,.
\end{eqnarray*}
\end{lemma}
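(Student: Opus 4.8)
The plan is to first extract a single general transpose rule from Definition~\ref{myInner} and then read off all three identities as instances of it. Concretely, for an ordinary matrix $M\in{\rm Mat}_{r\times r}(\mathbb{Z})$ and a multiplication operator by $h(s)\in\mathcal{F}$, I claim
\[
(M\otimes h)^t = M^t\otimes h(s^{-1}),
\]
where $M^t$ is the usual matrix transpose and $h(s^{-1})$ denotes the substitution $s\mapsto s^{-1}$. To verify this I would compute $(v\otimes f)\cdot(M\otimes h)(w\otimes g)=\langle v,Mw\rangle\,[f(s^{-1})h(s)g(s)]_0$ and rewrite it using $\langle v,Mw\rangle=\langle M^tv,w\rangle$ together with the substitution identity $[f(s^{-1})h(s)g(s)]_0=[(h(s^{-1})f)(s^{-1})\,g(s)]_0$, which is exactly $(M^t\otimes h(s^{-1}))(v\otimes f)\cdot(w\otimes g)$. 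The first claim $(1\otimes s)^t=1\otimes s^{-1}$ is then the special case $M=I$, $h(s)=s$, and amounts to the elementary observation that shifting the constant-term extraction by $s$ on one factor is the same as shifting by $s^{-1}$ on the other.

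For $K^t=K$ I would first record the two formal properties of the transpose that follow from symmetry and non-degeneracy of the pairing in Definition~\ref{myInner}: the transpose is $\mathbb{Z}$-linear, and it is anti-multiplicative, $(PQ)^t=Q^tP^t$ (the latter by applying the defining relation of the transpose twice and using symmetry of the inner product). Expanding \eqref{K} gives $K=1\otimes s^{-1}+A\otimes 1+1\otimes s$, so by linearity and the general rule $K^t=1\otimes s+A^t\otimes 1+1\otimes s^{-1}$. Since $A=C-2I$ with $C$ the Cartan matrix of $\mathfrak{sl}_{r+1}$, which is symmetric in the simply-laced type $A_r$, we have $A^t=A$, and the three summands reassemble to $K$.

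For $D^t=D$ I would use $D=K^{-1}$, guaranteed by the stated relation $DK=KD=1\otimes 1$. Anti-multiplicativity combined with $(1\otimes 1)^t=1\otimes 1$ shows that transpose commutes with inversion: from $DK=1\otimes 1$ we get $K^tD^t=1\otimes 1$, hence $D^t=(K^t)^{-1}$, and $K^t=K$ then yields $D^t=K^{-1}=D$. This sidesteps any need to transpose the infinite power series \eqref{D} directly.

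The main obstacle is not the algebra but the legitimacy of the formal manipulations on $\mathcal{F}=\mathbb{Z}[[s]][s^{-1}]$: elements may carry infinitely many positive powers of $s$, so the coefficient pairing underlying $[f(s^{-1})g(s)]_0$ is an infinite sum, and I must ensure it is well defined and non-degenerate on the subspace actually in play (so that the transpose is unique and the anti-multiplicative and inverse-commuting properties hold). I expect this to be handled by restricting attention to the elements arising from Lemma~\ref{sNum} — where one factor is effectively a Laurent polynomial times $[k]_s$ — so that all pairings are finite, after which the substitution $s\mapsto s^{-1}$ identities and the reassembly of $K$ are entirely routine.
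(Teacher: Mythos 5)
Your proposal is correct and follows essentially the same route as the paper: the paper verifies $(1\otimes s)^t=1\otimes s^{-1}$ by the same direct constant-term computation, treats $K^t=K$ as immediate (which your decomposition $K=1\otimes s^{-1}+A\otimes 1+1\otimes s$ with $A^t=A$ simply makes explicit), and deduces $D^t=D$ from $D^{-1}=K$ exactly as you do via compatibility of transpose with inversion. Your general rule $(M\otimes h)^t=M^t\otimes h(s^{-1})$ and your caveat about finiteness of the pairing on $\mathcal{F}=\mathbb{Z}[[s]][s^{-1}]$ are refinements the paper leaves implicit, and your restriction to pairings where one factor is a Laurent polynomial (as in Lemma~\ref{sNum}) is indeed how the lemma is used.
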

\begin{proof}
It is easy to see that $K$ is symmetric. Since $D^{-1}=K$, $D$ is also symmetric. The remaining result is shown by direct computation:
\begin{eqnarray*}
\left(v\otimes f(s) \right) \cdot \left(1\otimes s\right) \left(w\otimes g(s) \right) &=& \left< v,w\right> \left[f(s^{-1}) (sg(s)) \right]_0 \\
&=&\left< v,w\right> \left[(s^{-1})^{-1}f(s^{-1}) g(s) \right]_0  \\&=& (1\otimes s^{-1}) (v\otimes f(s)) \cdot (w\otimes g(s))\,,
\end{eqnarray*}
for any $v,w\in \mathbb{Z}^r$ and $f,g\in \mathcal{F}$. 
\end{proof}

\begin{lemma} \label{EpsilonForm}
Let $p,p'$ be dominant monomials in $\mathcal{M}$. Then,
\begin{equation*}
\epsilon\left(p,p'\right) = (1\otimes s- 1\otimes s^{-1})Du\left(p\right)(s) \cdot u\left(p'\right)(s)\,,
\end{equation*}
where $\epsilon$ is given in Definition \ref{def:Nakajima}.
\end{lemma}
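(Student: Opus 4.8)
The plan is to rewrite each of the two sums appearing in $\epsilon(p,p')$ (Definition \ref{def:Nakajima}) as a value of the inner product of Definition \ref{myInner}, and then to exploit the self-adjointness of the operator $K$ (hence of $D$) from Lemma \ref{transpose} to match the target expression. First I would record the elementary translation rule: writing $a(s)=\sum_i e_i\otimes a_i(s)$ and $b(s)=\sum_i e_i\otimes b_i(s)$ with $a_i(s)=\sum_j a_{i,j}s^j$, a direct extraction of the constant term gives
\[
(1\otimes s)\,a(s)\cdot b(s)=\sum_{i,j}a_{i,j}\,b_{i,j+1},\qquad (1\otimes s^{-1})\,a(s)\cdot b(s)=\sum_{i,j}a_{i,j}\,b_{i,j-1}.
\]

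With this in hand, set $B(p,p'):=\sum_{i,j}u_{i,j+1}(p)\tilde u_{i,j}(p')$. The first rule (with $a=\tilde u(p')$ and $b=u(p)$) gives $B(p,p')=(1\otimes s)\,\tilde u(p')(s)\cdot u(p)(s)$, and the same rule gives $B(p',p)=(1\otimes s)\,\tilde u(p)(s)\cdot u(p')(s)$. Since Definition \ref{def:Nakajima} reads $\epsilon(p,p')=B(p',p)-B(p,p')$, and since $\tilde u(p)(s)=Du(p)(s)$ by the generating-function form of the system in Definition \ref{uinverse}, the whole statement reduces to the single identity $B(p,p')=(1\otimes s^{-1})\,\tilde u(p)(s)\cdot u(p')(s)$; plugging this in yields $\epsilon(p,p')=(1\otimes s-1\otimes s^{-1})\,\tilde u(p)(s)\cdot u(p')(s)$, which is exactly the asserted formula once $\tilde u(p)=Du(p)$ is substituted.

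The genuine content is therefore the identity $B(p,p')=(1\otimes s^{-1})\,\tilde u(p)\cdot u(p')$, and I would prove it by an adjoint-sliding argument. Substituting $u=K\tilde u$ (equivalently $u(p)(s)=K\tilde u(p)(s)$) gives $B(p,p')=(1\otimes s)\,\tilde u(p')\cdot K\tilde u(p)$. Moving $K$ to the left factor using $K^t=K$ (Lemma \ref{transpose}) and then commuting $K$ past $1\otimes s$ (using the commutation of $K$ with $1\otimes s^n$ noted above) produces $B(p,p')=(1\otimes s)\,K\tilde u(p')\cdot \tilde u(p)=(1\otimes s)\,u(p')\cdot \tilde u(p)$. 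Finally, the adjoint relation $(1\otimes s)^t=1\otimes s^{-1}$ together with the symmetry of the inner product converts this into $(1\otimes s^{-1})\,\tilde u(p)\cdot u(p')$, completing the reduction.

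The routine but error-prone part is the opening constant-term computation and keeping the index shifts $j\mapsto j\pm1$ consistent throughout; the real mathematical leverage is entirely the self-adjointness of $K$ and its commutation with $1\otimes s$, which is precisely what turns the asymmetric defining sums of $\epsilon$ into the symmetric operator expression in the statement. The only subtlety I anticipate is making sure the pairings are well defined: one must note that $\tilde u(p)(s)$ lies in $\mathcal F=\mathbb Z[[s]][s^{-1}]$, which is guaranteed by the boundary condition $\tilde u_{i,j}(p)=0$ for $j$ sufficiently small in Definition \ref{uinverse}, while $u(p)(s)$ is an honest Laurent polynomial, so every constant-term extraction above is finite and legitimate.
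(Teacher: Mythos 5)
Your proposal is correct and takes essentially the same route as the paper's own proof: both rewrite the two defining sums of $\epsilon(p,p')$ as pairings of the form $u(\cdot)(s)\cdot(1\otimes s)\tilde{u}(\cdot)(s)$ via the constant-term extraction, and then slide the operator across the inner product using Lemma \ref{transpose} together with its commutation with $1\otimes s^{\pm 1}$. The only difference is cosmetic: you substitute $u = K\tilde{u}$ and transpose $K$ using $K^t = K$, whereas the paper substitutes $\tilde{u} = Du$ and transposes $(1\otimes s)D$ directly using $D^t = D$ and $(1\otimes s)^t = 1\otimes s^{-1}$ --- the same lemma and the same formal content.
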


%\begin{lemma}
%Let $\mathbf{Y}_{k,j(i,k)}^{(i)}, \mathbf{Y}_{k',j(i',k')}^{(i')}$ be dominant monomials of modules in $\mathcal{C}$. Let $k_0=\min (k,k')$ and $k_1=\max (k,k')$. Then,
%\begin{equation*}
%\epsilon(\mathbf{Y}_{k,j(i,k)}^{(i)}, \mathbf{Y}_{k',j(i',k')}^{(i')}) := (-1)^{k_0} (1\otimes s- 1\otimes s^{-1})Du^{i,k_0}(s) \cdot u^{i',k_1}(s)\,.
%\end{equation*}
%\end{lemma}

\begin{proof}
By definition, we have
\begin{eqnarray*}
u(p)(s)=\sum_i e_i \otimes \sum_{j} u_{i,j}(p)s^j &\text{ and }&\tilde{u} (p')(s) = \sum_{i'} e_{i'}\otimes \sum_{j'=1}^r \tilde{u}_{i',j'}(p') s^{j'}\,.
\end{eqnarray*}
Then,
\begin{eqnarray*}
u(p)(s) \cdot (1\otimes s) \tilde{u}(p')(s) &=& \sum_{i,i'=1}^r \left<e_i, e_{i'} \right>  \left[ \sum_{j,j'} u_{i,j}(p) s^{-j} \tilde{u}_{i',j'}(p')s^{j'+1}\right]_0 \\ &=& \sum_j u_{i,j}(p) \tilde{u}_{i, j-1}(p')\,.
\end{eqnarray*}
Therefore, using Deifnition \ref{def:Nakajima}, we have:
\begin{eqnarray*}
\epsilon(p,p')&=& - \sum_{i,j} u_{i,j}(p) \tilde{u}_{i,j-1}(p') + \sum_{j} u_{i,j}(p') \tilde{u}_{i,j-1}(p) \\
&=&- u(p)(s) \cdot (1\otimes s) \tilde{u}(p')(s)+ u(p')(s) \cdot (1\otimes s) \tilde{u}(p)(s)\\
&=&- u(p)(s) \cdot (1\otimes s) Du(p')(s)+ u(p')(s) \cdot (1\otimes s)D u^(p)(s)\\
&=&- D^t (1\otimes s)^t u(p)(s) \cdot u(p')(s) + (1\otimes s) D u(p)(s) \cdot u(p')(s) \\
&=&(1\otimes s-1\otimes s^{-1})D u(p)(s) \cdot u(p')(s)\,.
\end{eqnarray*}
\end{proof}
\begin{remark}
As a sanity check, let us verify that the expression we found for $\epsilon$ is also anti-symmetric. 
\begin{eqnarray*}
\epsilon(p',p) &=& (1\otimes s - 1\otimes s^{-1} ) D u(p')(s) \cdot u(p)(s)\\
 &=& u(p)(s) \cdot (1\otimes s - 1\otimes s^{-1} ) D u(p')(s)\\
&=& D^t (1\otimes s^{-1} - 1\otimes s) u(p)(s) \cdot u(p')(s) = - \epsilon(p,p')\,.
\end{eqnarray*}
\end{remark}

\begin{defn}\label{convention}
When $u$ and $\tilde{u}$ generating functions of Definitions \ref{uGenFun} and \ref{utildeGenFun} are applied to the dominant monomial of a module in the fundamental cluster $\mathcal{C}$, i.e. monomial of the form $\mathbf{Y}_{k,-k+(i+k+1)_2}^{(i)}$, we make the following simplifying notation:
\begin{eqnarray*}
u(\mathbf{Y}_{k,-k+(i+k+1)_2}^{(i)})(s) := u^{i,k}(s) &\text{ and }& \tilde{u}(\mathbf{Y}_{k,-k+(i+k+1)_2}^{(i)})(s) :=  \tilde{u}^{i,k}(s)\,.
\end{eqnarray*}
\end{defn}
\begin{remark}
We emphasize that the superscript $u^{i,k}(s)$ in Definition \ref{convention} indicates the monomial $Y_{k,-k+(i+k+1)_2}^{(i)}$. In contrast, the subscript $u_{i,j}(m)$ indicates the exponent of $Y_{i,j}$ in $m$. 
\end{remark}

\begin{lemma}
The following equation holds:
\begin{eqnarray*}
 u^{i,k-1}(s) + u^{i,k+1}(s) - u^{i-1,k}(s) - u^{i+1,k}(s)   =  1\otimes s^{-1+(i+k)_2} Ke_i\otimes [k]_s\,,
\end{eqnarray*}
for any $i\in I$ and $k\geq 1$. 
%\begin{eqnarray*}
% (-1)^{i+k+1} ( u^{i,k-1}(s) + u^{i,k+1}(s) - u^{i-1,k}(s) - u^{i+1,k}(s) )   = (-1)^{i+k} 1\otimes s^{-(i+k+1)_2} K e_i\otimes [k]_s
%\end{eqnarray*}
\end{lemma}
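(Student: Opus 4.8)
The plan is to prove this by direct substitution, since after Lemma~\ref{sNum} both sides are completely explicit. By Definition~\ref{convention} together with Lemma~\ref{sNum}, each term equals $u^{i',k'}(s)=e_{i'}\otimes s^{-1+(i'+k'+1)_2}[k']_s$. The first thing I would check is a parity bookkeeping: the four index pairs on the left-hand side, namely $(i,k-1)$, $(i,k+1)$, $(i-1,k)$, and $(i+1,k)$, \emph{all} produce the same shift exponent. Indeed $(i+(k-1)+1)_2=(i+k)_2$, $(i+(k+1)+1)_2=(i+k+2)_2=(i+k)_2$, $((i-1)+k+1)_2=(i+k)_2$, and $((i+1)+k+1)_2=(i+k+2)_2=(i+k)_2$. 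Hence every term carries the common factor $1\otimes s^{-1+(i+k)_2}$, which I would pull out front.

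After factoring, the left-hand side becomes $(1\otimes s^{-1+(i+k)_2})$ applied to
\[
e_i\otimes\bigl([k-1]_s+[k+1]_s\bigr)-e_{i-1}\otimes[k]_s-e_{i+1}\otimes[k]_s .
\]
The next step is the $s$-number identity $[k-1]_s+[k+1]_s=(s+s^{-1})[k]_s$, which I would verify straight from the closed form $[m]_s=(s^m-s^{-m})/(s-s^{-1})$ in Definition~\ref{notationCond2}: putting the two fractions over the common denominator $s-s^{-1}$ and factoring $s^{k-1}+s^{k+1}=(s^{-1}+s)s^k$ (and likewise for the negative powers) yields exactly $(s+s^{-1})[k]_s$. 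Substituting this rewrites the bracket as $e_i\otimes(s+s^{-1})[k]_s-e_{i-1}\otimes[k]_s-e_{i+1}\otimes[k]_s$.

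Finally I would recognize this bracket as $K(e_i\otimes[k]_s)$. Expanding the definition~\eqref{K} and simplifying $(1\otimes s^{-1})(A\otimes s)=A\otimes 1$ gives $K=1\otimes s^{-1}+A\otimes 1+1\otimes s$, so $K(e_i\otimes[k]_s)=e_i\otimes(s^{-1}+s)[k]_s+(Ae_i)\otimes[k]_s$; and since $A=C-2I$ for the type $A_r$ Cartan matrix $C$, we have $Ae_i=-e_{i-1}-e_{i+1}$ (with the convention $e_0=e_{r+1}=0$, matching $Y_{i,j}=1$ for $i\notin I$). This is precisely the bracket above, whence the left-hand side equals $(1\otimes s^{-1+(i+k)_2})K(e_i\otimes[k]_s)$, the asserted right-hand side. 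The computation is routine; the only points requiring care—rather than a genuine obstacle—are the parity bookkeeping of the first step and the boundary cases $i=1$ and $i=r$, where one term on the left vanishes but the identity $Ae_i=-e_{i-1}-e_{i+1}$ continues to hold under the stated convention.
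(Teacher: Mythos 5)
Your proposal is correct and follows essentially the same route as the paper: pull out the common parity factor $1\otimes s^{-1+(i+k)_2}$, use the $s$-number identity $[k-1]_s+[k+1]_s=(s+s^{-1})[k]_s$, and recognize the remaining bracket as $K(e_i\otimes[k]_s)$ with the boundary cases $i=1,r$ absorbed by $A=C-2I$. The only cosmetic difference is that you derive the $s$-number identity from the closed form $(s^m-s^{-m})/(s-s^{-1})$, which handles $k=1$ uniformly (since $[0]_s=0$ agrees with the closed form at $m=0$), whereas the paper expands the telescoping sums and checks $k=1$ separately.
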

\begin{proof}
We have:
\begin{eqnarray*}
[k-1]_s+[k+1]_s &=& \left(s^{k-2} + \cdots + s^{-k+2} \right) + \left(s^{k} + s^{k-2} +\cdots +s^{-k+2} + s^{-k}\right) \\
&=&  \left(s^{k-2}  + \cdots + s^{-k+2} + s^{-k}\right) + \left(s^{k} + s^{k-2} +\cdots + s^{-k+2}\right) \\
&=&s^{-1} \left(s^{k-1} + \cdots + s^{-k+1} \right) + s\left(s^{k-1} +\cdots + s^{-k+1}\right) \\
&=& (s^{-1}+s) [k]_s\,.
\end{eqnarray*}
Notice that the above equation holds when $k=1$ as well:
\begin{eqnarray*}
[0]_s + [2]_s = [2]_s = s + s^{-1} = \left( s+s^{-1}\right) [1]_s\,. 
\end{eqnarray*}
By Lemma \ref{sNum}, we have $u^{i,k}(s)= e_i \otimes s^{-1+(i+k+1)_2}\,\,[k]_s$. Then,
\begin{eqnarray*}
u^{i,k-1}(s) &+& u^{i,k+1}(s) - u^{i-1,k}(s) - u^{i+1,k}(s) = \\
&=&e_i \otimes s^{-1 + (i+k)_2}[k-1]_s + e_i \otimes s^{-1 + (i+k)_2} [k+1]_s + (-e_{i-1}-e_{i+1}) \otimes s^{-1+(i+k)_2} [k]_s\\
&=& 1\otimes s^{-1+(i+k)_2} \left( e_i \otimes \left( s+s^{-1}  \right) [k]_s + (-e_{i-1}-e_{i+1})\otimes [k]_s  \right)\\
&=& 1\otimes s^{-1+(i+k)_2} K e_i\otimes [k]_s\,.
\end{eqnarray*}
Notice that when $i=1$ or $i=r$, the above equation still holds since the matrix $A$ incorporates the boundary values. 
\end{proof}

We are now ready to prove the theorem stated at the beginning of the section. 

\begin{proof}[Proof of Theorem \ref{main2}]
We want to compute:
\begin{eqnarray}
\frac{1}{2} (\Lambda B) _{i,k}^{i',k'}&=& \frac{1}{2} \sum_{p,n} \Lambda_{i,k}^{p,n} B_{p,n}^{i',k'} \label{LambdaBValue}\\
&=&\sum_{p,n} (1\otimes s-1\otimes s^{-1}) D u^{i,k}(s) \cdot u^{p,n}(s) B_{p,n}^{i', k'} \,.\nonumber 
\end{eqnarray}
The goal is to show that the value of \eqref{LambdaBValue} is $1$ if $(i,k)= (i',k')$ and $0$ otherwise. 

Given $(i',k')$, the only non-zero terms in $B$ are given as follows:
\begin{eqnarray*}
B_{i'-1,k'}^{i',k'} = B_{i'+1,k'}^{i',k'} = (-1)^{i'+k'} &\text{ and }& B_{i',k'-1}^{i',k'} = B_{i',k'+1}^{i',k'} = (-1)^{i'+k'+1}\,,
\end{eqnarray*}
and $B_{n,p}^{i',k'} =0$ if $n\notin I$ or $p<0$ (see Figure~\ref{fig:Tsystem} on page~\pageref{fig:Tsystem}). Then,
\begin{eqnarray}
\frac{1}{2}(\Lambda B) _{i,k}^{i',k'}&=&(1\otimes s - 1\otimes s^{-1}) Du^{i,k}(s) \cdot \nonumber\\
&&\hspace{1in} (-1)^{i'+k'+1} \left( u^{i',k'-1}(s) + u^{i',k'+1}(s) -u^{i'-1,k'}(s) - u^{i'+1,k'}(s) \right) \nonumber\\
&=&(-1)^{i'+k'+1} (1\otimes s - 1\otimes s^{-1}) D e_i\otimes s^{-1+(i+k+1)_2} [k]_s \cdot  (1\otimes s^{-1 + (i'+k')_2} )K e_{i'}\otimes [k']_s\nonumber \\
&=&(-1)^{i'+k'+1} ( 1\otimes s^{(i+k+1)_2-(i'+k')_2} )(1\otimes s - 1\otimes s^{-1}) e_i\otimes [k]_s  \cdot     e_{i'} \otimes [k']_s \nonumber\\
&=&(-1)^{i'+k'+1} \left< e_i,e_{i'}\right> \left[ ( s^{-1} - s)(s^{(i'+k')_2 - (i+k+1)_2} [k]_{s^{-1}} [k']_s\right]_0 \nonumber\\
&=& (-1)^{i+k'+1} \delta_{ii'} \left[ s^\delta \frac{1}{s-s^{-1}} \left( s^{k'-k} - s^{-k-k'} - s^{k+k'} + s^{k'-k}\right)     \right]_0 \label{CT}\,,
\end{eqnarray}
where $\delta:=(i+k')_2 - (i+k+1)_2$. Notice that $\delta$ can only be $+1,0$ or $-1$. 

It is clear that if $i\neq i'$, the value of \eqref{CT} is zero. Suppose $k\neq k'$, and without loss of generality, let's assume $k'>k$. We want to show that the constant term part of \eqref{CT} vanishes in this case. Let $b:=k'-k > 0$ and $a:=-k'-k < 0$. Notice that $a$ and $b$ have the same parity. 

If $\delta=1$, the constant term expression of \eqref{CT} is as follows:
\begin{eqnarray}\label{kneqk'}
\begin{array}{lll}s &\times& \frac{1}{s(1-s^{-2})} \left( (s^{b} + s^{-b}) - (s^{a} + s^{-a}) \right) =\\
&=&  (s^b + s^{-b}) ( 1+s^{-2}+s^{-4} +\cdots ) - (s^a+s^{-a}) (1+s^{-2} + s^{-4} +\cdots ) \end{array}
\end{eqnarray}
Notice that $-b-2n, a-2m < 0$ for any $n,m\geq 0$, and therefore, $s^{-b}s^{-2n}$ and $s^{a}s^{-2m}$ are not constants for any $n,m\geq 0$. If $a$ and $b$ are odd, it is clear that there is no constant term in \eqref{kneqk'}. If $a$ and $b$ are even, there exists some $n>0$ such that $s^{b-2n} = 1$. Since $a<0$, there must exist some $m>0$ such that $s^{-a-2m}=1$. Therefore, the constant term in \eqref{kneqk'} vanishes.  

If $\delta=-1$, we write
\begin{eqnarray*}
s^{-1} \times \frac{1}{s^{-1}(s^2-1)} \left( (s^{b} + s^{-b}) - (s^{a} + s^{-a}) \right) \,,
\end{eqnarray*}
and use the same argument. 

If $\delta = 0$, the constant term part of \eqref{CT} can be written as:
\begin{eqnarray*}
\frac{1}{(1-s^{-2})} \left( (s^{b-1} + s^{-b-1}) - (s^{a-1} + s^{-a-1}) \right)\,,
\end{eqnarray*}
and we use the same argument. Therefore, the value of \eqref{CT} is always zero if $k\neq k'$. 

Suppose $k=k'$. If $(i+k)_2=0$, we have $\delta = -1$ and \eqref{CT} can be written as:
\begin{eqnarray*}
\frac{1}{2}(\Lambda B)_{i,k}^{i,k} &=& - \left[ s^{-1} \frac{1}{-s^{-1}(1-s^2)} \left( 2 - s^{2k} - s^{-2k} \right)  \right]_0 \\
&=& \left[ \left( 2 - s^{2k} - s^{-2k} \right) ( 1 + s^{2} + s^{4} + \cdots + s^{2k} + \cdots ) \right]_0= 1\,.
\end{eqnarray*}

If $(i+k)_2=1$, we have $\delta=1$ and \eqref{CT} can be written as:
\begin{eqnarray*}
\frac{1}{2}(\Lambda B)_{i,k}^{i,k} &=& \left[ s \frac{1}{s(1-s^{-2})} \left( 2 - s^{2k}  - s^{-2k} \right)  \right]_0  \\
&=& \left[ \left( 2 - s^{2k} - s^{-2k} \right) ( 1 + s^{-2} + s^{-4} + \cdots + s^{-2k} + \cdots ) \right]_0= 1\,.
\end{eqnarray*}
This concludes the proof.
\end{proof}

We are now ready to prove the final condition.

\begin{thm} \label{main3}
The quantum mutation is given by: %Proposition 4.9 in BZ. 
\begin{equation*}
T_{k,l-1}^{(i)}*T_{k,l+1}^{(i)} = t^{\frac{1}{2} \Lambda_{i,k,l-1}^{i,k-1,l} + \frac{1}{2}\Lambda_{i,k,l-1}^{i,k+1,l} - \frac{1}{2} \Lambda_{i,k-1,l}^{i,k+1,l} } T_{k-1,l}^{(i)}*T_{k+1,l}^{(i)}+t^{\frac{1}{2} \Lambda_{i,k,l-1}^{i-1,k,l} + \frac{1}{2}\Lambda_{i,k,l-1}^{i+1,k,l} - \frac{1}{2} \Lambda_{i-1,k,l}^{i+1,k,l} } T_{k,l}^{(i-1)}*T_{k,l}^{(i+1)}.
\end{equation*}
\end{thm}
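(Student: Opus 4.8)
The plan is to start from Nakajima's deformed $T$-system in its $*_\gamma$ form and convert it, term by term, into a relation for the twisted product $*$, then read off and verify the exponents. By Remark~\ref{rmk:changeOfVar}, Theorem~\ref{thm:Nakajima} reads, in the $T$-variables,
\[
T_{k,l-1}^{(i)} *_\gamma T_{k,l+1}^{(i)} = T_{k+1,l}^{(i)} *_\gamma T_{k-1,l}^{(i)} + t^{-1}\, T_{k,l}^{(i-1)} *_\gamma T_{k,l}^{(i+1)}.
\]
First I would apply \eqref{gammaMult} to every product, $\chi_1 *_\gamma \chi_2 = t^{-\epsilon(m_+^1,m_+^2)}\chi_1 * \chi_2$, where each $\epsilon$ of dominant monomials is $\tfrac12$ of the corresponding entry of $\Lambda$ by Corollary~\ref{gammaComm}. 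Since the two KR-characters in each exchange monomial $*_\gamma$-commute (Condition~I; the involution construction of Section~\ref{proofOfMain1} applies to these pairs), I may reorder $T_{k+1,l}^{(i)}*T_{k-1,l}^{(i)}$ as $T_{k-1,l}^{(i)}*T_{k+1,l}^{(i)}$ at the cost of $t^{-\Lambda_{i,k-1,l}^{i,k+1,l}}$, and this reordering is exactly what produces the symmetrizing term $-\tfrac12\Lambda_{i,k-1,l}^{i,k+1,l}$ in the statement. Writing $p := \mathbf{Y}_{k,l-1-k}^{(i)}$ for the dominant monomial of $T_{k,l-1}^{(i)}$ and $\epsilon_{\mathrm{new}} := \epsilon\big(p,\mathbf{Y}_{k,l+1-k}^{(i)}\big)$, the whole computation then collapses onto the claimed identity once the two exponent relations
\[
\epsilon_{\mathrm{new}} = \tfrac12\Lambda_{i,k,l-1}^{i,k-1,l} + \tfrac12\Lambda_{i,k,l-1}^{i,k+1,l}, \qquad \epsilon_{\mathrm{new}} = \tfrac12\Lambda_{i,k,l-1}^{i-1,k,l} + \tfrac12\Lambda_{i,k,l-1}^{i+1,k,l} + 1
\]
are verified.

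The first relation is the easy half. The classical $T$-system is an exact equality on top terms: a direct check from \eqref{KRdom} gives the monomial identity $\mathbf{Y}_{k,l-1-k}^{(i)}\mathbf{Y}_{k,l+1-k}^{(i)} = \mathbf{Y}_{k-1,l-k+1}^{(i)}\mathbf{Y}_{k+1,l-k-1}^{(i)}$ in the $Y_{i,j}$. Since $\epsilon(p,-)$ is bilinear in the exponent data and $\epsilon(p,p)=0$ by antisymmetry, applying $\epsilon(p,-)$ to both sides of this identity yields the first relation at once.

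The second relation carries the real content, namely the additive constant $+1$, which is precisely what absorbs the coefficient $t^{-1}$ in front of the second summand. Here I would use the generating-function machinery of Section~4. From the computation in the proof of Lemma~\ref{sNum} one has $u(\mathbf{Y}_{k,j}^{(i)})(s) = e_i\otimes s^{j-1}[k]_s$, and a short calculation gives the operator identity $u(A_{i,j})(s) = K(e_i\otimes s^j)$ for $K$ as in \eqref{K}; combining these produces the descendant factorization $\mathbf{Y}_{k,l-k}^{(i-1)}\mathbf{Y}_{k,l-k}^{(i+1)} = \mathbf{Y}_{k,l-1-k}^{(i)}\mathbf{Y}_{k,l+1-k}^{(i)}\prod_{m=1}^{k}A_{i,l-2m}^{-1}$. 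Applying $\epsilon(p,-)$ and using bilinearity reduces the second relation to the single evaluation $\sum_{m=1}^{k}\epsilon(p,A_{i,l-2m}) = 1$. Feeding $u(A_{i,j})=K(e_i\otimes s^j)$ into Lemma~\ref{EpsilonForm} and using $K^t=K$, $KD = 1\otimes 1$, and the commutation of $K$ with $1\otimes s^n$ (Lemma~\ref{transpose}), this collapses to $\epsilon(p,A_{i,j}) = (1\otimes s - 1\otimes s^{-1})u(p)\cdot(e_i\otimes s^j)$, a constant-term extraction via the inner product of Definition~\ref{myInner}.

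The main obstacle is this final evaluation: carrying out the $[\,\cdot\,]_0$ extraction so that, summed over $m=1,\dots,k$, all contributions cancel except a single surviving $+1$ coming from the topmost factor $A_{i,l-2}^{-1}$. Everything else---the bilinear reductions, the top-term monomial identity, and the $\prod A^{-1}$ factorization---is routine once that factorization is pinned down, so the heart of the proof is the explicit constant-term computation that reproduces the $+1$ matching Nakajima's $t^{-1}$.
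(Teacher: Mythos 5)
Your overall architecture is exactly the paper's: Theorem \ref{main3} is restated in $\chi$-variables with $\epsilon$-exponents (Theorem \ref{main3chi}), Nakajima's relation (Theorem \ref{thm:Nakajima}) is converted from $*_\gamma$ to $*$ via \eqref{gammaMult} and Corollary \ref{gammaComm}, and the whole statement reduces to your two exponent relations, which are precisely the paper's \eqref{cond3eq1} and \eqref{cond3eq2}. Your proof of the first relation (the top-term monomial identity plus bilinearity of $\epsilon(p,\cdot)$ in the exponent data and $\epsilon(p,p)=0$) is the paper's argument in multiplicative rather than additive notation. Your route to the second relation is also equivalent in substance: since $u(A_{i,j})(s)=K(e_i\otimes s^j)$, your descendant factorization through $\prod A^{-1}$ is literally the identity the paper writes additively as $\mathbf{Y}_{k,j+1}^{(i-1)}+\mathbf{Y}_{k,j+1}^{(i+1)}=-K\,\mathbf{Y}_{k,j+1}^{(i)}+\mathbf{Y}_{k,j}^{(i)}+\mathbf{Y}_{k,j+2}^{(i)}$, with $KD=1\otimes 1$ playing the same role in both; your explicit handling of the reordering $T_{k+1,l}^{(i)}*T_{k-1,l}^{(i)}\to T_{k-1,l}^{(i)}*T_{k+1,l}^{(i)}$ producing the symmetrizing term $-\tfrac12\Lambda_{i,k-1,l}^{i,k+1,l}$ is correct, and in fact more careful than the paper, which keeps Nakajima's ordering in Theorem \ref{main3chi}.

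There is, however, a genuine gap at exactly the step you flag as the heart of the proof, and it is compounded by an index error. With $j=l-1-k$, the correct factorization is $\mathbf{Y}_{k,j+1}^{(i-1)}\mathbf{Y}_{k,j+1}^{(i+1)}=\mathbf{Y}_{k,j}^{(i)}\mathbf{Y}_{k,j+2}^{(i)}\prod_{m=1}^{k}A_{i,j+2m-1}^{-1}$, i.e., the spectral parameters are $l-k,\,l-k+2,\dots,l+k-2$, not your $l-2,\,l-4,\dots,l-2k$; with your indices the monomial identity is simply false. The deferred constant-term extraction is where this matters: telescoping gives $(s^{-1}-s)\sum_{m=0}^{k-1}s^{-j-2m}=s^{-j-2k+1}-s^{-j+1}$, hence $\epsilon(p,A_{i,j'})=\delta_{j',\,j+2k-1}-\delta_{j',\,j-1}$. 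Summed over the correct set $j'=j+1,j+3,\dots,j+2k-1$ this yields $+1$, contributed by the factor with the \emph{largest} parameter $A_{i,j+2k-1}=A_{i,l+k-2}$ --- not your ``topmost'' $A_{i,l-2}$. Summed over your set it yields $0$ for $k$ odd and $-1$ for $k$ even (where $j'=j-1$ enters your range), so the computation as planned would fail rather than produce the needed $+1$. The repair is pure bookkeeping, and once the indices are fixed your argument closes and coincides, term by term, with the paper's direct evaluation $(1\otimes s-1\otimes s^{-1})\,\mathbf{Y}_{k,j}^{(i)}\cdot\mathbf{Y}_{k,j+1}^{(i)}=1$; but as written, the one step you yourself identify as carrying the real content is both unexecuted and fed incorrect data.
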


Theorem \ref{main3} is stated in terms of $T_{k,l}^{(i)}$ variables, while Nakajima's $t$-deformed $T$-system is written in terms of $\chi_{k,j}^{(i)}$ variables, which is achieved by a change of variables as described in Remark \ref{rmk:changeOfVar}. Also recall that $\Lambda$ is expressed in terms of the $\epsilon$ function (see Corollary \ref{gammaComm}). We now restate Theorem \ref{main3} in terms of $\chi_{k,j}^{(i)}$ variables and $\epsilon$ expressions for $\Lambda$. 
\begin{thm}\label{main3chi}
The following equation holds:
\begin{eqnarray}\label{cond3}
\begin{array}{c}\chi_{k,j}^{(i)}*\chi_{k,j+2}^{(i)} = t^{ \epsilon(\mathbf{Y}_{k,j}^{(i)}, \mathbf{Y}_{k-1,j+2}^{(i)}) + \epsilon(\mathbf{Y}_{k,j}^{(i)}, \mathbf{Y}_{k+1,j}^{(i)}) - \epsilon(\mathbf{Y}_{k+1,j+2}^{(i)}, \mathbf{Y}_{k-1,j}^{(i)})  } \chi_{k+1,j}^{(i)}*\chi_{k-1,j+2}^{(i)} \\
+t^{\epsilon( \mathbf{Y}_{k,j}^{(i)}, \mathbf{Y}_{k,j+1}^{(i-1)} ) + \epsilon( \mathbf{Y}_{k,j}^{(i)}, \mathbf{Y}_{k,j+1}^{(i+1)} ) -  \epsilon(\mathbf{Y}_{k,j+1}^{(i-1)}, \mathbf{Y}_{k,j+1}^{(i+1)}) } \chi_{k,j+1}^{(i-1)}*\chi_{k,j+1}^{(i+1)}\end{array}.
\end{eqnarray}
\end{thm}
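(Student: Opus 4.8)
The plan is to derive Theorem \ref{main3chi} from Nakajima's relation \eqref{eq:tTsystem} by trading the $*_\gamma$-product for the $*$-product. By \eqref{gammaMult}, for $(q,t)$-characters $\chi_1,\chi_2$ with dominant monomials $m_+^1,m_+^2$ one has $\chi_1*_\gamma\chi_2 = t^{-\epsilon(m_+^1,m_+^2)}\chi_1*\chi_2$, since $\epsilon(m_+^1,m_+^2)$ depends only on the fixed dominant monomials and hence factors through the linear extension. Applying this to each of the three products in \eqref{eq:tTsystem} and multiplying through by $t^{\epsilon(\mathbf{Y}_{k,j}^{(i)},\mathbf{Y}_{k,j+2}^{(i)})}$, I obtain the $*$-form of the $T$-system with first and second exponents
$$E_1^{*}=\epsilon(\mathbf{Y}_{k,j}^{(i)},\mathbf{Y}_{k,j+2}^{(i)})-\epsilon(\mathbf{Y}_{k+1,j}^{(i)},\mathbf{Y}_{k-1,j+2}^{(i)}),\qquad E_2^{*}=\epsilon(\mathbf{Y}_{k,j}^{(i)},\mathbf{Y}_{k,j+2}^{(i)})-1-\epsilon(\mathbf{Y}_{k,j+1}^{(i-1)},\mathbf{Y}_{k,j+1}^{(i+1)}).$$
By Corollary \ref{gammaComm} the exponents asserted in the theorem are themselves sums of $\epsilon$-values (recall $\tfrac12\Lambda=\epsilon$), so the whole statement reduces to two scalar identities, $E_1=E_1^{*}$ and $E_2=E_2^{*}$.

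Next I would reduce these using only bilinearity of $\epsilon$ (valid because $u$ and $v$ are additive, Remark \ref{rmk:gammaAdditive}), its antisymmetry, and an elementary linear relation among generating functions. Lemma \ref{sNum} gives $u(\mathbf{Y}_{k,j}^{(i)})(s)=e_i\otimes s^{j+k-1}[k]_s$, and the $s$-number identity $[k-1]_s+[k+1]_s=(s+s^{-1})[k]_s$ then yields $u(\mathbf{Y}_{k+1,j}^{(i)})+u(\mathbf{Y}_{k-1,j+2}^{(i)})=u(\mathbf{Y}_{k,j}^{(i)})+u(\mathbf{Y}_{k,j+2}^{(i)})$. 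Feeding this into $E_1$ and using $\epsilon(m,m)=0$ collapses $E_1=E_1^{*}$ to the single vanishing statement (I1): $\epsilon(\mathbf{Y}_{k+1,j}^{(i)},\mathbf{Y}_{k-1,j+2}^{(i)})=0$. For the second term, bilinearity combines the two neighbours into $u(\mathbf{Y}_{k,j+1}^{(i-1)})+u(\mathbf{Y}_{k,j+1}^{(i+1)})=-Ae_i\otimes s^{j+k}[k]_s$, and since $K=1\otimes s^{-1}+A\otimes 1+1\otimes s$ one checks $u(\mathbf{Y}_{k,j+2}^{(i)})+Ae_i\otimes s^{j+k}[k]_s = s^{j+k}Ke_i\otimes[k]_s-u(\mathbf{Y}_{k,j}^{(i)})$; again using $\epsilon(m,m)=0$ this collapses $E_2=E_2^{*}$ to (I2): $\epsilon(\mathbf{Y}_{k,j}^{(i)},\,s^{j+k}Ke_i\otimes[k]_s)=1$.

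Finally I would prove (I1) and (I2). For (I2): $\epsilon$ is invariant under shifting all spectral parameters of both arguments by the same amount (the prefactors cancel inside the pairing of Definition \ref{myInner}), so the left side is independent of $j$ and equals the fundamental-cluster value, which is exactly the diagonal entry $\tfrac12(\Lambda B)_{i,k}^{i,k}$ shown to be $1$ in the proof of Theorem \ref{main2}; equivalently, inserting $u(\mathbf{Y}_{k,j}^{(i)})$ into Lemma \ref{EpsilonForm} and cancelling $D$ against $K$ via $KD=1\otimes1$ reduces it to the constant term $\big[(1-s^{-2})[k]_s^{\,2}\big]_0=1$. For (I1): writing $\mathbf{Y}_{k+1,j}^{(i)}=Y_{i,j}\,\mathbf{Y}_{k-1,j+2}^{(i)}\,Y_{i,j+2k}$ and using additivity of $u$ with $\epsilon(\mathbf{Y}_{k-1,j+2}^{(i)},\mathbf{Y}_{k-1,j+2}^{(i)})=0$, the quantity becomes $\epsilon(Y_{i,j},\mathbf{Y}_{k-1,j+2}^{(i)})+\epsilon(Y_{i,j+2k},\mathbf{Y}_{k-1,j+2}^{(i)})$; the reflection of spectral indices about the centre $j+k$ fixes $\mathbf{Y}_{k-1,j+2}^{(i)}$, interchanges $Y_{i,j}$ and $Y_{i,j+2k}$, and sends $\epsilon\mapsto-\epsilon$ (an anti-invariance that follows from Definition \ref{def:Nakajima} after the appropriate index substitution), so the two terms cancel and (I1) holds.

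The step I expect to be the genuine obstacle is the constant-term analysis underlying (I1) and (I2). The pairing in Lemma \ref{EpsilonForm} involves $D$, an honest formal power series in $s$ rather than a Laurent polynomial, so the symmetry manipulations ($D^{t}=D$, $(1\otimes s)^{t}=1\otimes s^{-1}$) are safe only at the level of rational functions, and the extraction $[\,\cdot\,]_0$ must be taken after $D$ has been cancelled against the Laurent-polynomial operator $K$; keeping these expansions-at-$0$ consistent is the delicate point, and the reflection anti-invariance of $\epsilon$ used for (I1) needs the same care. (For consistency with Theorem \ref{main3} via $\tfrac12\Lambda=\epsilon$, the third $\epsilon$ in $E_1$ is to be read as $\epsilon(\mathbf{Y}_{k-1,j+2}^{(i)},\mathbf{Y}_{k+1,j}^{(i)})$, which is precisely what makes $E_1=E_1^{*}$ reduce to the clean vanishing (I1).)
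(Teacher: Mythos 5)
Your proposal is correct in substance and shares the paper's skeleton: convert Theorem \ref{thm:Nakajima} to the $*$-product via \eqref{gammaMult}, reduce everything to scalar identities among $\epsilon$-values, and settle those with Lemma \ref{sNum}, the identity $[k-1]_s+[k+1]_s=(s+s^{-1})[k]_s$, the operator $K$, and a constant-term extraction; your reduction of $E_2$ is exactly the paper's identity \eqref{cond3eq2}, and your $E_1$ manipulation reproduces its \eqref{cond3eq1}. The genuine divergence is the first exponent. The paper restates Nakajima's relation with the coefficient $t^{-\epsilon(\mathbf{Y}_{k+1,j+2}^{(i)},\mathbf{Y}_{k-1,j}^{(i)})}$ --- the same expression appearing in \eqref{cond3} --- so that term cancels on both sides and no further identity is needed; but \eqref{gammaMult} actually produces $t^{-\epsilon(\mathbf{Y}_{k+1,j}^{(i)},\mathbf{Y}_{k-1,j+2}^{(i)})}$, and the two are \emph{not} equal: in type $A_1$ with $k=2$, $j=0$, Definition \ref{def:Nakajima} gives $\epsilon(\mathbf{Y}_{3,2},\mathbf{Y}_{1,0})=-1$ while $\epsilon(\mathbf{Y}_{3,0},\mathbf{Y}_{1,2})=0$. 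So the paper's shortcut rests on an index slip ($j\leftrightarrow j+2$ inside that one $\epsilon$), while your reading --- the one forced by Theorem \ref{main3} through $\tfrac{1}{2}\Lambda=\epsilon$ --- is the coherent one; the price is your extra vanishing statement (I1), which the paper never proves and which your route therefore genuinely adds.

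Happily (I1) is true, and the delicacy you flag can be closed without any reflection argument or transpose of $D$: use the untransposed expression from the proof of Lemma \ref{EpsilonForm}, namely $\epsilon(p,p')=-u(p)\cdot(1\otimes s)Du(p')+u(p')\cdot(1\otimes s)Du(p)$, in which each pairing matches a Laurent polynomial against a single series and is unambiguous. Since $u(\mathbf{Y}_{k+1,j}^{(i)})=e_i\otimes s^{j+k}[k+1]_s$ and $u(\mathbf{Y}_{k-1,j+2}^{(i)})=e_i\otimes s^{j+k}[k-1]_s$ sit on the same node, with palindromic Laurent polynomials centred at the same power $s^{j+k}$, and the matrix $D(s)=s\left(1+As+s^2\right)^{-1}$ is symmetric, both pairings equal $\bigl[(sD(s))_{ii}\,[k+1]_s\,[k-1]_s\bigr]_0$ and cancel, giving $\epsilon(\mathbf{Y}_{k+1,j}^{(i)},\mathbf{Y}_{k-1,j+2}^{(i)})=0$. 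Your suspicion about $D^t=D$ is warranted in a stronger sense than you state: with respect to the pairing of Definition \ref{myInner}, the adjoint of $D$ expanded at $0$ is $D$ expanded at $\infty$, and the closed form of Lemma \ref{EpsilonForm} can genuinely fail off the fundamental cluster --- for the pair above it returns $\epsilon(\mathbf{Y}_{3,0},\mathbf{Y}_{1,2})=1$ whereas the definition gives $0$. Your discipline of extracting $[\,\cdot\,]_0$ only after $D$ has been cancelled against the Laurent-polynomial operator $K$ is exactly right, and it is also what keeps the paper's own computations of \eqref{cond3eq1} and \eqref{cond3eq2} valid, since those use only bilinearity of $\epsilon$ and $K$-cancellation.
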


\begin{proof}
Recall Nakajima's $t$-deformed $T$-system (see Theorem \ref{thm:Nakajima}):
\begin{eqnarray*}
 t^{-\epsilon(\mathbf{Y}_{k,j}^{(i)},\mathbf{Y}_{k,j+2}^{(i)})} \chi_{k,j}^{(i)}*\chi_{k,j+2}^{(i)} = t^{- \epsilon(\mathbf{Y}_{k+1,j+2}^{(i)}, \mathbf{Y}_{k-1,j}^{(i)})  } \chi_{k+1,j}^{(i)}*\chi_{k-1,j+2}^{(i)} +t^{-1- \epsilon(\mathbf{Y}_{k,j+1}^{(i-1)}, \mathbf{Y}_{k,j+1}^{(i+1)}) } \chi_{k,j+1}^{(i-1)}*\chi_{k,j+1}^{(i+1)}.
\end{eqnarray*}
We will show that the $t$-deformed $T$-system is equivalent to \eqref{cond3}. It suffices to show:
\begin{eqnarray}
\epsilon(\mathbf{Y}_{k,j}^{(i)}, \mathbf{Y}_{k-1, j+2}^{(i)}) + \epsilon(\mathbf{Y}_{k,j}^{(i)}, \mathbf{Y}_{k+1, j}^{(i)}) &=& \epsilon(\mathbf{Y}_{k,j}^{(i)}, \mathbf{Y}_{k,j+2}^{(i)}), \label{cond3eq1}\\
 \epsilon(\mathbf{Y}_{k,j}^{(i)}, \mathbf{Y}_{k,j+1}^{(i-1)}) + \epsilon(\mathbf{Y}_{k,j}^{(i)}, \mathbf{Y}_{k,j+1}^{(i+1)})&=&-1+\epsilon(\mathbf{Y}_{k,j}^{(i)}, \mathbf{Y}_{k,j+2}^{(i)}).\label{cond3eq2}
\end{eqnarray}
By an abuse of notation, let us denote $\mathbf{Y}_{k,j}^{(i)} := u(\mathbf{Y}_{k,j}^{(i)})(s) =  e_i \otimes ( s^j + s^{j+2} + \cdots + s^{j+2(k-1)})$ (see \eqref{KRdom} and Definition \ref{uGenFun}). 
\begin{eqnarray*}
\mathbf{Y}_{k-1,j+2}^{(i)} + \mathbf{Y}_{k+1,j}^{(i)}&=& e_i \otimes \left( (s^{j+2} + \cdots + s^{j+2+2(k-2)}) + (s^j + s^{j+2} +\cdots + s^{j+2k}) \right)\\
&=&e_i \otimes \left( (s^{j+2} + \cdots + s^{j+2k-2} + s^{j+2k}) + (s^j + s^{j+2} +\cdots + s^{j+2k-2}) \right)\\&=&\mathbf{Y}_{k, j+2}^{(i)} + \mathbf{Y}_{k,j}^{(i)}\,.
\end{eqnarray*}
Then, using Definition \ref{EpsilonForm}, we compute:
\begin{eqnarray*}
\epsilon(\mathbf{Y}_{k,j}^{(i)}, \mathbf{Y}_{k-1, j+2}^{(i)}) + \epsilon(\mathbf{Y}_{k,j}^{(i)}, \mathbf{Y}_{k+1, j}^{(i)}) &=& (1\otimes s - 1\otimes s^{-1})D \mathbf{Y}_{k,j}^{(i)} \cdot \left( \mathbf{Y}_{k-1,j+2}^{(i)} + \mathbf{Y}_{k+1,j}^{(i)} \right)\\
&=&(1\otimes s - 1\otimes s^{-1})D \mathbf{Y}_{k,j}^{(i)} \cdot \left( \mathbf{Y}_{k, j+2}^{(i)} + \mathbf{Y}_{k,j}^{(i)} \right)\\
&=&\epsilon(\mathbf{Y}_{k,j}^{(i)}, \mathbf{Y}_{k,j+2}^{(i)}) + \underbrace{\epsilon(\mathbf{Y}_{k,j}^{(i)}, \mathbf{Y}_{k,j}^{(i)})}_\text{0}\,,
\end{eqnarray*}
where $\epsilon(\mathbf{Y}_{k,j}^{(i)}, \mathbf{Y}_{k,j}^{(i)})=0$ due to anti-commutativity of $\epsilon$. This proves \eqref{cond3eq1}. 

Notice that
\begin{eqnarray*}
(1\otimes s + 1\otimes s^{-1}) \mathbf{Y}_{k,j+1}^{(i)} = \mathbf{Y}_{k,j+2}^{(i)} + \mathbf{Y}_{k,j}^{(i)}\,.
\end{eqnarray*}
Then,
\begin{eqnarray*}
\mathbf{Y}_{k,j+1}^{(i-1)} + \mathbf{Y}_{k,j+1}^{(i+1)} &=& \mathbf{Y}_{k,j+1}^{(i-1)} + \mathbf{Y}_{k,j+1}^{(i+1)} - (1\otimes s+1\otimes s^{-1})\mathbf{Y}_{k,j+1}^{(i)}  + (\mathbf{Y}_{k,j+2}^{(i)} + \mathbf{Y}_{k,j}^{(i)}) \\
&=& -(1\otimes s + 1\otimes s^{-1} +A\otimes 1) \mathbf{Y}_{k,j+1}^{(i)} +\mathbf{Y}_{k,j+2}^{(i)} + \mathbf{Y}_{k,j}^{(i)} \\
&=& -K \mathbf{Y}_{k,j+1}^{(\alpha)} + \mathbf{Y}_{k,j}^{(\alpha)} + \mathbf{Y}_{k, j+2}^{(\alpha)}\,.
\end{eqnarray*}
Next, 
\begin{eqnarray*}
\epsilon(\mathbf{Y}_{k,j}^{(i)}, \mathbf{Y}_{k,j+1}^{(i-1)}) + \epsilon(\mathbf{Y}_{k,j}^{(i)},\mathbf{Y}_{k,j+1}^{(i+1)}) &=& (1\otimes s - 1\otimes s^{-1})D(\mathbf{Y}_{k,j}^{(i)}) \cdot \left( \mathbf{Y}_{k,j+1}^{(i-1)} + \mathbf{Y}_{k,j+1}^{(i+1)}\right)\\
&=&(1\otimes s - 1\otimes s^{-1}) D(\mathbf{Y}_{k,j}^{(i)}) \cdot \left( -K(\mathbf{Y}_{k,j+1}^{(i)}) + \mathbf{Y}_{k,j}^{(i)} + \mathbf{Y}_{k, j+2}^{(i)}\right)\\
&=&-(1\otimes s - 1\otimes s^{-1}) \mathbf{Y}_{k,j}^{(i)} \cdot \mathbf{Y}_{k,j+1}^{(i)} + \underbrace{\epsilon(\mathbf{Y}_{k,j}^{(i)},\mathbf{Y}_{k,j}^{(i)})}_\text{0}
+ \epsilon(\mathbf{Y}_{k,j}^{(i)}, \mathbf{Y}_{k,j+2}^{(i)}) \,,
\end{eqnarray*}
where
\begin{eqnarray*}
(1\otimes s - 1\otimes s^{-1}) \mathbf{Y}_{k,j}^{(i)} \cdot \mathbf{Y}_{k,j+1}^{(i)} &=& \left[ (s^{-1}-s)( s^{-j} + \cdots +s^{-j-2k+2}) ( s^{j+1} + \cdots + s^{j+2k-1}) \right]_0\\
&=& \left[  (s^{-1}-s) s^{-j-k+1} [k]_s s^{j+k} [k]_s  \right]_0\\
&=& \left[ -s\frac{1}{s-s^{-1}} (s^{2k} - 2 + s^{-2k})  \right]_0=1
\end{eqnarray*}
This concludes the proof of \eqref{cond3eq2}. 
\end{proof}

\subsection{Explicit proof of Condition II for type $A_1$}
The commutation matrix $\Lambda$ can be computed explicitly and Theorem \ref{main2} can be verified through direct computation. We will work out the explicit description of $\Lambda$ and the direct verification of Theorem \ref{main2} for the case of type $A_1$ now. 

\begin{defn}
Given a matrix $M = (m_{i,j})_{i\in I, j\in J}$ for some index sets $I$ and $J$ (possibly infinite), we can write $M$ as a generating series $M(z_1,z_2) = \sum_{i\in I, j\in J} m_{i,j}z_1^iz_2^j$, where $z_1,z_2$ are indeterminates keeping track of the indices of the matrix.   
\end{defn}

The quiver associated to the $T$-system of type $A_1$ is as follows:

\begin{figure}[h]
\begin{center}
\begin{tikzpicture}

	\tikzstyle{black} = [circle, minimum width=5pt, fill, inner sep=0pt]
	\tikzstyle{white} = [circle, minimum width=5pt,draw,inner sep=0pt]

\node at (0,-1) [above] {$1$};
\node at (1,-1) [above] {$2$};
\node at (2,-1) [above] {$3$};
\node at (3,-1) [above] {$4$};
\node at (4,-1) [above] {$5$};

\node[black] (10) at (0,-1) {};
\node[white] (11) at (1,-1) {};
\node[black] (12) at (2,-1) {};
\node[white] (13) at (3,-1) {};
\node[black] (14) at (4,-1) {};

\node at (5,-1) {$\cdots$};

\path[-{latex'},thick]

(10) edge (11)
(12) edge (11)
(12) edge (13)
(14) edge (13)
(6,-1) edge node[above]{$k$} (7,-1)
;

\end{tikzpicture}
\end{center}
\caption{The quiver $\Gamma_T$, ($k\in \mathbb{Z}_+$)}
\label{fig:TsystemA1}
\end{figure}
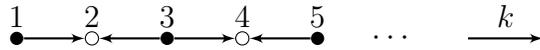

The signed adjacency matrix of the quiver $\Gamma_T$ in Figure~\ref{fig:TsystemA1} is as follows:

\begin{equation*}
B=\left(
\begin{array}{c|cccccccccc}
{\red k}&{\red 1}&{\red 2}&{\red 3}&{\red 4}&{\red 5}&{\red 6}&{\red 7}&{\red 8}&{\red 9}&{\red \cdots}\\
\hline
{\red 1}		&0&1&0&0&0&0&0&0&0&\cdots\\
{\red 2}		&-1&0&-1&0&0&0&0&0&0&\cdots\\
{\red 3}		&0&1&0&1&0&0&0&0&0&\cdots\\
{\red 4}		&0&0&-1&0&-1&0&0&0&0&\cdots\\
{\red 5}		&0&0&0&1&0&1&0&0&0&\cdots\\
{\red 6}		&0&0&0&0&-1&0&-1&0&0&\cdots\\
{\red 7}		&0&0&0&0&0&1&0&1&0&\cdots\\
{\red 8}		&0&0&0&0&0&0&-1&0&-1&\cdots\\
{\red 9}		&0&0&0&0&0&0&0&1&0&\cdots\\
{\red \vdots}&\vdots&\vdots&\vdots&\vdots&\vdots&\vdots&\vdots&\vdots&\vdots&\ddots\\
\end{array}\right)
\end{equation*}

which can be written as:
\begin{eqnarray*}
B(z_1,z_2)&=&z_1z_2\frac{z_2-z_1}{1+z_1z_2}.
\end{eqnarray*}

Recall that to each vertex $k$, we associate the KR-module $W_{k, l = (k)_2-k}$ (see Equation \eqref{eq:FundClustT}), and the commutation matrix $\Lambda$ is given by:
\begin{eqnarray*}
\Lambda(k,k') = 2 \epsilon(\mathbf{Y}_{k, (k)_2-k} , \mathbf{Y}_{k', (k')_2-k'}), 
\end{eqnarray*}
where $\mathbf{Y}_{k,j} = Y_{j} Y_{j+2}\cdots Y_{j+2k-2}$ (see Equation \eqref{KRdom}), where we dropped the index $i$ in $Y_{i,j}$ since $i$ can take exactly one value, and $\epsilon$ is from Definition \ref{def:Nakajima}. More precisely, we have: 
\begin{eqnarray*}
\epsilon(\mathbf{Y}_{k,(k)_2-k}, \mathbf{Y}_{k', (k')_2-k'}) &=& - \sum_{j=0}^{k-1} \tilde{u}_{(k)_2 - k + 2j -1}( \mathbf{Y}_{k', (k')_2-k'} ) + \sum_{j = 0}^{k'-1} \tilde{u}_{(k')_2 - k' + 2j - 1}(\mathbf{Y}_{k,(k)_2-k}),
\end{eqnarray*}
where $\tilde{u}_{j}(m) \in \mathbb{R}$ ($j\in \mathbb{Z}$) is the unique solution of the system:
\begin{equation*} 
u_{j}(m) = \tilde{u}_{j-1}(m) + \tilde{u}_{j+1}(m)\,\, ,
\end{equation*}
such that $\tilde{u}_{j}(m)=0$ for $j$ sufficiently small (see Definition \ref{uinverse}). Again, we dropped the dependance on $i$ and set $u_{i,j}(m) = u_j(m)$.

\begin{example}
Consider $m=\mathbf{Y}_{1,0} = Y_{0}$. Then $\tilde{u}_{1}(m) = 1$ and $\tilde{u}_{j}(m) = 0$ for all $j\geq 1$. 
\begin{eqnarray*}
&\vdots&\\
u_4(m) = 0 &=& \tilde{u}_{5}(m) + \tilde{u}_{7}(m) = 0 + 0\\
u_2(m) = 0 &=& \tilde{u}_{1}(m) + \tilde{u}_{3}(m) = 1 + 0\\
u_0(m) = 1 &=& \tilde{u}_{-1}(m) + \tilde{u}_{1}(m) = 0 + 1\\
u_{-2}(m) = 0 &=& \tilde{u}_{-3}(m) + \tilde{u}_{-1}(m) =  0 + 0\\
&\vdots&
\end{eqnarray*}
Here, we must have $\tilde{u}_{-1}(m)=0$. Otherwise, $\tilde{u}_{-2j-1}(u)=0$ for all $j\geq 0$, which contradicts the condition $\tilde{u}_j(m)=0$ for $j$ sufficiently small. 
\end{example}

\begin{example}
When $m=\mathbf{Y}_{2,-2} = Y_{-2}Y_0$, we can compute $\tilde{u}_{-1}(m) = 1$ and $\tilde{u}_j(m) = 0$ for all $j\neq -1$. Then,
\begin{eqnarray*}
\epsilon(\mathbf{Y}_{1,0}, \mathbf{Y}_{2,-2}) &=&  - \tilde{u}_{-1}( Y_{-2}Y_0 ) + (\tilde{u}_{-3}(Y_0) + \tilde{u}_{-1}(Y_0) ) = - 1 + 0 = -1.\\
\end{eqnarray*}
\end{example}

\begin{example}
When $m=\mathbf{Y}_{4,-4} = Y_{-4}Y_{-2}Y_0Y_2$, we can compute $\tilde{u}_{-3}(m) = \tilde{u}_{1}(m) = 1$ and $\tilde{u}_j(m) = 0$ for all $j\neq -3,1$. Then,
\begin{eqnarray*}
\epsilon(\mathbf{Y}_{1,0}, \mathbf{Y}_{4,-4}) &=&  - \tilde{u}_{-1}( Y_{-4}Y_{-2}Y_0Y_2 ) + (\tilde{u}_{-5}(Y_0) + \tilde{u}_{-3}(Y_0) + \tilde{u}_{-1}(Y_0) + \tilde{u}_{1}(Y_0) ) = - 0 + 1 = 1.\\
\end{eqnarray*}
\end{example}
Similarly, the commutation matrix $\Lambda$ can be computed and is given by:
\begin{equation*}
\Lambda=\left(
\begin{array}{c|cccccccccc}
{\red k}&{\red 1}&{\red 2}&{\red 3}&{\red 4}&{\red 5}&{\red 6}&{\red 7}&{\red 8}&{\red 9}&{\red \cdots}\\
\hline
{\red 1}		&0&-1&0&1&0&-1&0&1&0&\cdots\\
{\red 2}		&1&0&0&0&0&0&0&0&0&\cdots\\
{\red 3}		&0&0&0&-1&0&1&0&-1&0&\cdots\\
{\red 4}		&-1&0&1&0&0&0&0&0&0&\cdots\\
{\red 5}		&0&0&0&0&0&-1&0&1&0&\cdots\\
{\red 6}		&1&0&-1&0&1&0&0&0&0&\cdots\\
{\red 7}		&0&0&0&0&0&0&0&-1&0&\cdots\\
{\red 8}		&-1&0&1&0&-1&0&1&0&0&\cdots\\
{\red 9}		&0&0&0&0&0&0&0&0&0&\cdots\\
{\red \vdots}&\vdots&\vdots&\vdots&\vdots&\vdots&\vdots&\vdots&\vdots&\vdots&\ddots\\
\end{array}\right)
\end{equation*}
which can be written as:
\begin{eqnarray*}
\Lambda(z_1,z_2)&=&\frac{z_1z_2(z_1-z_2)}{(1+z_1z_2)(1+z_1^2)(1+z_2^2)}.
\end{eqnarray*}
Then,
\begin{eqnarray*}
\frac{1}{2}\Lambda B (z_1,z_2) &=& {\rm{Res}}_w \left( \frac{1}{w} \Lambda(z_1,w) B(\frac{1}{w}, z_2)  \right) \\
&=&{\rm{Res}}_w \left( \frac{1}{w} \frac{z_1w(z_1-w)}{(1+wz_1)(1+z_1^2)(1+w^2)}\cdot \frac{1}{w}z_2 \frac{z_2-\frac{1}{w}}{1+\frac{z_2}{w}} \right)\\
&=& {\rm{Res}}_w \left(  \frac{1}{w} \frac{z_1z_2(z_1-w)(wz_2-1)}{(1+wz_1)(1+z_1^2)(1+w^2)(w+z_2)} \right)\\
&=&-\frac{z_1^2z_2}{(1+z_1^2)z_2}+\frac{z_1z_2(z_1+z_2)(-z_2^2-1)}{(-z_2)(1-z_1z_2)(1+z_1^2)(1+z_2^2)}\\
&=&-\frac{z_1^2}{1+z_1^2}+\frac{z_1(z_1+z_2)}{(1-z_1z_2)(1+z_1^2)}\\
&=&\frac{-z_1^2+z_1^3z_2+z_1^2+z_1z_2}{(1-z_1z_2)(1+z_1^2)}=\frac{z_1z_2}{1-z_1z_2} = I(z_1,z_2)
\end{eqnarray*}

\section{Evolution in $k$-direction}
% EvoJ

We showed that Nakajima's deformed $T$-system forms a quantum cluster algebra with evolution in $l$-direction in $T_{k,l}^{(i)}$ variables (equivalently in $j$-direction in $\chi_{k,j}^{(i)}$ variables). We now show that the same deformed $T$-system is not a quantum cluster algebra with evolution in $k$-direction. In particular, this shows that the quantum $T$-system is not compatible with the quantum $Q$-system considered in \cite{di2011non}. 

By re-writing Nakajima's $t$-deformed $T$-system of Theorem \ref{thm:Nakajima} so that the evolution is in $k$-direction, we obtain:
\begin{eqnarray*}
\chi_{k+1,j}^{(i)} *_\gamma \chi_{k-1,j+2}^{(i)} = \chi_{k,j}^{(i)} *_\gamma \chi_{k,j+2}^{(i)} - t^{-1} \chi_{k,j+1}^{(i-1)} *_\gamma \chi_{k,j+1}^{(i+1)} \,.
\end{eqnarray*}
Notice that the negative sign on the right-hand side is not compatible with cluster algebra interpretation, where the right-hand side expression must have exactly $2$ positive contributions. However, it is possible to renormalize $\chi_{k,j}^{(i)}$'s such that the negative sign becomes positive \cite{di2009positivity}. We call the resulting variables $\widehat{\chi}_{k,j}^{(i)}$. The ensuing $T$-system of type $A_1$ is as follows:
\begin{eqnarray} \label{A1T}
\widehat{\chi}_{k+1,j}*_\gamma \widehat{\chi}_{k-1,j+2} =  \widehat{\chi}_{k,j}*_\gamma \widehat{\chi}_{k,j+2} + t^{-1} ,
\end{eqnarray}
where we dropped the parameter $i$ since it can have only one value. 

The variables on the right-hand side of \eqref{A1T} must belong to the same cluster, and therefore, must $t$-commute if \eqref{A1T} does form a quantum mutation. We will give a simple counter example to this condition, which shows that \eqref{A1T} is not a quantum cluster algebra.  

\begin{example}
Let $\g = \mathfrak{sl}_2$. We consider $\widehat{\chi}_{1,0}$ and $\widehat{\chi}_{1,2}$, which are both on the right-hand side of \eqref{A1T}. The values of these variables are given as follows:

\begin{center}
\begin{tikzpicture}

\node at (0,0) {
\begin{tikzpicture}
\node at (-3,2) {$\widehat{\chi}_{1,0} = \chi_{q,t}(W_{1,0}^{(1)})$};
\node at (-1,2) {$=$};
\node (1') at (0,2) {$Y_{1,0}$};
\node (1) at (0,.5) {
\begin{ytableau}
\none[{\color{blue}\scriptstyle{0}}]&1
\end{ytableau}
};

\node (2') at (2,2) { $Y_{1,2}^{-1}$};

\node (2) at (2,0.5) {
\begin{ytableau}
\none[{\color{blue}\scriptstyle{0}}]&2
\end{ytableau}
};

\node at (1,2) {$+$};
\path[->,thick]
(1') edge[dotted] (1)
(2') edge[dotted] (2);
\end{tikzpicture}
};

\node at (4,.5) {;};

\node at (8,0) {
\begin{tikzpicture}
\node at (-3,2) {$\widehat{\chi}_{1,2} = \chi_{q,t}(W_{1,2}^{(1)})$};
\node at (-1,2) {$=$};
\node (1') at (0,2) {$Y_{1,2}$};
\node (1) at (0,.5) {
\begin{ytableau}
\none[{\color{blue}\scriptstyle{-1}}]&1
\end{ytableau}
};

\node (2') at (2,2) { $Y_{1,4}^{-1}$};

\node (2) at (2,0.5) {
\begin{ytableau}
\none[{\color{blue}\scriptstyle{-1}}]&2
\end{ytableau}
};

\node at (1,2) {$+$};
\path[->,thick]
(1') edge[dotted] (1)
(2') edge[dotted] (2);
\end{tikzpicture}
};

\end{tikzpicture}
\end{center}
Their twisted product can be computed using Theorem \ref{gamma}, and is given by:

\begin{center}
\begin{tikzpicture}

\node at (-3,2) {$\widehat{\chi}_{1,0} *_\gamma \widehat{\chi}_{1,2}$};
\node at (-1.5,2) {$=$};

\node (1') at (0,2) {$Y_{1,0}Y_{1,2}$};

\node (1) at (0,0) {
\begin{ytableau}
\none[{\color{blue}\scriptstyle{-1}}]&\none&1\\
\none[{\color{blue}\scriptstyle{0}}]&1\\
\end{ytableau}
};

\node (2') at (3,2) { $Y_{1,0}Y_{1,4}^{-1}$};

\node (2) at (3,0) {
\begin{ytableau}
\none[{\color{blue}\scriptstyle{-1}}]&\none&2\\
\none[{\color{blue}\scriptstyle{0}}]&1\\
\end{ytableau}
};

\node (3') at (6,2) { $t^{-1} \,Y_{1,2}^{-1}Y_{1,2}$};

\node (3) at (6,0) {
\begin{ytableau}
\none[{\color{blue}\scriptstyle{-1}}]&\none&1\\
\none[{\color{blue}\scriptstyle{0}}]&2\\
\end{ytableau}
};

\node (4') at (9,2) { $Y_{1,2}^{-1}Y_{1,4}^{-1}$};

\node (4) at (9,0) {
\begin{ytableau}
\none[{\color{blue}\scriptstyle{-1}}]&\none&2\\
\none[{\color{blue}\scriptstyle{0}}]&2\\
\end{ytableau}
};

\node at (1.5,2) {$+$};
\node at (4.5,2) {$+$};
\node at (7.5,2) {$+$};

\path[->,thick]
(1') edge[dotted] (1)
(2') edge[dotted] (2)
(3') edge[dotted] (3)
(4') edge[dotted] (4)
;

\end{tikzpicture}
\end{center}
Notice that the third term on the right-hand side forms a pair of column tableaux of regular type with non-zero boundary term in $\gamma$. On the other hand, 
\begin{center}
\begin{tikzpicture}

\node at (-3,2) {$\widehat{\chi}_{1,2} *_\gamma \widehat{\chi}_{1,0}$};
\node at (-1.5,2) {$=$};

\node (1') at (0,2) {$Y_{1,2}Y_{1,0}$};

\node (1) at (0,0) {
\begin{ytableau}
\none[{\color{blue}\scriptstyle{-1}}]&1\\
\none[{\color{blue}\scriptstyle{0}}]&\none&1\\
\end{ytableau}
};

\node (2') at (3,2) { $Y_{1,4}^{-1}Y_{1,0}$};

\node (2) at (3,0) {
\begin{ytableau}
\none[{\color{blue}\scriptstyle{-1}}]&2\\
\none[{\color{blue}\scriptstyle{0}}]&\none&1\\
\end{ytableau}
};

\node (3') at (6,2) {$t \,Y_{1,2}Y_{1,2}^{-1}$};

\node (3) at (6,0) {
\begin{ytableau}
\none[{\color{blue}\scriptstyle{-1}}]&1\\
\none[{\color{blue}\scriptstyle{0}}]&\none&2\\
\end{ytableau}
};

\node (4') at (9,2) {$Y_{1,4}^{-1}Y_{1,2}^{-1}$};

\node (4) at (9,0) {
\begin{ytableau}
\none[{\color{blue}\scriptstyle{-1}}]&2\\
\none[{\color{blue}\scriptstyle{0}}]&\none&2\\
\end{ytableau}
};

\node at (1.5,2) {$+$};
\node at (4.5,2) {$+$};
\node at (7.5,2) {$+$};

\path[->,thick]
(1') edge[dotted] (1)
(2') edge[dotted] (2)
(3') edge[dotted] (3)
(4') edge[dotted] (4)
;

\end{tikzpicture}
\end{center}

We see that $\widehat{\chi}_{1,0}$ and $\widehat{\chi}_{1,2}$ do not $t$-commute. 
\end{example}

\section{Conclusion/Discussion}
% Conclusion
The Nakajima $(q,t)$-characters of KR-modules satisfy a deformed $T$-system, introduced in \cite{nakajima2003t}, which is a $t$-deformed discrete dynamical system with $3$ independent parameters: $i,k,j$. In this thesis, we showed that this $t$-deformed $T$-system forms a quantum mutation in a quantization of the $T$-system cluster algebra only when the direction of mutation is in the $l$-parameter. 

This result pertains to type $A$ only. It is an open question whether the same holds for other types, and in particular to type $D$. There are noticeable differences between types $A$ and $D$. For example, unlike the case in type $A$, the KR-modules of type $D$ are reducible as $U_q(\g)$-modules. Also, the $(q,t)$-characters of the KR-modules are not identical to their $q$-characters. 

However, despite these differences, the proofs of Conditions II and III are applicable to type $D$ with minimal adjustments. The hard part is the proof of Condition I, the $t$-commutativity of the fundamental cluster variables. The proof of Condition I is entirely combinatorial and requires the knowledge of the $(q,t)$-characters of all the KR-modules in the fundamental cluster. Nakajima's tableaux-sum notation exists for type $D$ as well (see \cite{nakajima2002t}). However, it is combinatorially different from the type $A$ case, and, therefore, all the combinatorial structures introduced as part of the proof of Condition I will need to be translated to the combinatorics of the type $D$.

\bibliographystyle{plain}
\bibliography{reference}

\end{document}